\numberwithin{equation}{section}
\newtheorem{Theorem}{Theorem}[section]
\newtheorem{Corollary}[Theorem]{Corollary}
\newtheorem{Lemma}[Theorem]{Lemma}
\newtheorem{Proposition}[Theorem]{Proposition}
{ \theoremstyle{definition}
\newtheorem{Definition}[Theorem]{Definition}
\newtheorem{Assumption}[Theorem]{Assumption}
\newtheorem{Remark}[Theorem]{Remark}}
\newcommand\scalemath[2]{\scalebox{#1}{\mbox{\ensuremath{\displaystyle #2}}}}
\newcommand{\gauss}[2]{\genfrac{[}{]}{0pt}{}{#1}{#2}}
\newsavebox{\foobox}
\newcommand{\slantbox}[2][0]{\mbox{%
 \sbox{\foobox}{#2}%
 \hskip\wd\foobox
 \pdfsave
 \pdfsetmatrix{1 0 #1 1}%
 \llap{\usebox{\foobox}}%
 \pdfrestore
}}
\newcommand\unslant[2][-.2]{%
 \mkern.5mu%
 \ThisStyle{\slantbox[#1]{$\SavedStyle#2$}}%
 \mkern-1mu%
}
\DeclareSymbolFont{sfletters}{OML}{iwonal}{m}{it}
\DeclareMathSymbol{\stheta}{\mathord}{sfletters}{"12}
\DeclareMathSymbol{\sphi}{\mathord}{sfletters}{"1E}
\DeclareMathSymbol{\svarphi}{\mathord}{sfletters}{"27}
\DeclareMathSymbol{\salpha}{\mathord}{sfletters}{"0B}
\DeclareMathSymbol{\sbeta}{\mathord}{sfletters}{"0C}
\DeclareMathSymbol{\sgamma}{\mathord}{sfletters}{"0D}
\DeclareMathSymbol{\stau}{\mathord}{sfletters}{"1C}
\newcommand{\mytheta}{\unslant\stheta}
\newcommand{\myphi}{\unslant\sphi}
\newcommand{\myvarphi}{\unslant\svarphi}
\newcommand{\myalpha}{\unslant\salpha}
\newcommand{\mybeta}{\unslant\sbeta}
\newcommand{\mygamma}{\unslant\sgamma}
\newcommand{\mytau}{\unslant\stau}
\DeclareSymbolFont{bsfletters}{OT1}{cmsmf}{bx}{n}
\DeclareMathSymbol{\Wsf}{\mathord}{bsfletters}{"57}
\begin{document}

\allowdisplaybreaks

\newcommand{\arXivNumber}{1709.07825}

\renewcommand{\PaperNumber}{009}

\FirstPageHeading

\ShortArticleName{Non-Symmetric Dual $q$-Krawtchouk Polynomials}

\ArticleName{Dual Polar Graphs, a nil-DAHA of Rank One, \\ and Non-Symmetric Dual $\boldsymbol{q}$-Krawtchouk Polynomials}

\Author{Jae-Ho LEE~$^\dag$ and Hajime TANAKA~$^\ddag$}

\AuthorNameForHeading{J.-H.~Lee and H.~Tanaka}

\Address{$^\dag$~Department of Mathematics and Statistics, University of North Florida,\\
\hphantom{$^\dag$}~Jacksonville, FL 32224, USA}
\EmailD{\href{mailto:jaeho.lee@unf.edu}{jaeho.lee@unf.edu}}
\URLaddressD{\url{https://sites.google.com/site/jaeholeemath/}}

\Address{$^\ddag$~Research Center for Pure and Applied Mathematics, Graduate School of Information Sciences,\\
\hphantom{$^\ddag$}~Tohoku University, Sendai 980-8579, Japan}
\EmailD{\href{mailto:htanaka@tohoku.ac.jp}{htanaka@tohoku.ac.jp}}
\URLaddressD{\url{http://www.math.is.tohoku.ac.jp/~htanaka/}}

\ArticleDates{Received September 25, 2017, in f\/inal form January 29, 2018; Published online February 10, 2018}

\Abstract{Let $\Gamma$ be a dual polar graph with diameter $D \geqslant 3$, having as vertices the maximal isotropic subspaces of a~f\/inite-dimensional vector space over the f\/inite f\/ield $\mathbb{F}_q$ equipped with a non-degenerate form (alternating, quadratic, or Hermitian) with Witt index~$D$. From a~pair of a~vertex~$x$ of $\Gamma$ and a maximal clique~$C$ containing~$x$, we construct a~$2D$-dimensional irreducible module for a nil-DAHA of type $(C^{\vee}_1, C_1)$, and establish its connection to the generalized Terwilliger algebra with respect to~$x$,~$C$. Using this module, we then def\/ine the non-symmetric dual $q$-Krawtchouk polynomials and derive their recurrence and orthogonality relations from the combinatorial points of view. We note that our results do not depend essentially on the particular choice of the pair~$x$,~$C$, and that all the formulas are described in terms of~$q$,~$D$, and one other scalar which we assign to $\Gamma$ based on the type of the form.}

\Keywords{dual polar graph; nil-DAHA; dual $q$-Krawtchouk polynomial; Terwilliger algebra; Leonard system}

\Classification{05E30; 20C08; 33D45; 33D80}

\section{Introduction}

$Q$-\emph{polynomial distance-regular graphs} are viewed as f\/inite analogues of compact symmetric spaces of rank one, and have been extensively studied; cf.~\cite{BI,BCN,DKT,CG}. By a famous theorem of Leonard~\cite{Leonard1982SIAM}, \cite[Section~3.5]{BI}, the duality property of $Q$-polynomial distance-regular graphs characterizes the terminating branch of the Askey scheme~\cite{KLS2010B} of (basic) hypergeometric orthogonal polynomials, at the top (i.e., $_4\phi_3$) of which are the $q$-Racah polynomials. A central tool in studying such a graph is the \emph{Terwilliger algebra} $T=T(x)$ \cite{PT.1992(1),PT.1993(1),PT.1993(2)}, which is a~non-commutative semisimple matrix $\mathbb{C}$-algebra attached to every vertex $x$ of the graph.

Cherednik \cite{IC,Cherednik1995AM,Cherednik1995IM,Cherednik1995IMRN} introduced the \emph{double affine Hecke algebras} (DAHAs) for reduced af\/f\/ine root systems and used them to prove several conjectures for the Macdonald polynomials. Sahi~\cite{SS} then extended the concept to the non-reduced af\/f\/ine root systems of type $(C^{\vee}_n, C_n)$ and proved the duality conjecture and other conjectures for the Koornwinder polynomials, which are the Macdonald polynomials attached to the af\/f\/ine root systems of type $(C^{\vee}_n, C_n)$. For $n=1$, these polynomials are the Askey--Wilson polynomials which are of $_4\phi_3$, and the $q$-Racah polynomials are a discretization of the Askey--Wilson polynomials.

Recently, the f\/irst author \cite{JHL} found a link between the theories of $Q$-polynomial distance-regular graphs and the DAHAs. Namely, he considered a $Q$-polynomial distance-regular graph~$\Gamma$ corresponding to $q$-Racah polynomials. He further assumed that $\Gamma$ possesses a clique $C$ with maximal possible size (called a \emph{Delsarte clique}), and def\/ined a semisimple matrix $\mathbb{C}$-algebra \smash{$\mathbf{T}=\mathbf{T}(x,C)$} attached to~$C$ \emph{and} a vertex $x\in C$, which contains $T(x)$ as a subalgebra. Then he showed that the so-called \emph{primary} $\mathbf{T}$-module has the structure of an irreducible module for the DAHA of type $(C^{\vee}_1, C_1)$, and studied how the two module structures are related. In the subsequent paper~\cite{JHL2}, he captured in this context what should be called the \emph{non-symmetric} $q$-\emph{Racah polynomials}, which are the f\/inite counterpart of the non-symmetric Askey--Wilson polynomials discussed by Sahi~\cite{SS}, and succeeded in giving an explicit combinatorial description of their orthogonality relations.
We note that the expression for the non-symmetric $q$-Racah polynomials in~\cite{JHL2} agrees with the one for the non-symmetric Askey--Wilson polynomials given by Koornwinder and Bouzef\/four~\cite{KB2011AA}.

A big goal in this project is to establish a ``non-symmetric version'' of Leonard's theorem mentioned above. As the next attempt towards this goal, we discuss the \emph{dual polar graphs} in this paper, and specialize the above situation to this case. The dual polar graphs are a classical family of $Q$-polynomial distance-regular graphs arising naturally as homogeneous spaces of f\/inite classical groups by maximal parabolic subgroups, and correspond to the dual $q$-Krawtchouk polynomials which are of~$_3\phi_2$; cf.~\cite{Stanton1980AJM,Stanton1981GD}. In particular, we will obtain the \emph{non-symmetric} dual $q$-Krawtchouk polynomials and describe their recurrence and orthogonality relations; cf.\ Theorems~\ref{thm4} and~\ref{thm3}. There are multiple motivations for the present work. First, for the $q$-Racah case, there is indeed no known example of a $Q$-polynomial distance-regular graph with large diameter (say, at least ten) having such a maximal clique, except the ordinary polygons with even order, which we view as rather trivial.
Hence we may say that the theory developed in~\cite{JHL,JHL2} still remains somewhat at the algebraic/parametric level, whereas we will deal with concrete (and non-trivial) combinatorial examples in this paper. Second, there are of course other candidates of examples to be considered, such as the Grassmann graphs corresponding to the dual $q$-Hahn polynomials which lie in between the $q$-Racah and the dual $q$-Krawtchouk polynomials, but we decided to focus on the dual polar graphs, mainly because they exhibit quite a strong regularity of being \emph{regular near polygons}, so that the computations become far simpler than those in~\cite{JHL,JHL2}. Though many of our results can also be obtained in principle by taking appropriate limits of the (much involved) results in~\cite{JHL,JHL2}, this fact motivates us to work out the details for this particular case rather independently of~\cite{JHL,JHL2}. Third, we will encounter a~\emph{nil-DAHA} of type $(C^{\vee}_1, C_1)$, which is obtained by specializing some of the def\/ining relations of the DAHA of type $(C^{\vee}_1, C_1)$.
The nil-DAHAs were introduced and discussed recently by Cherednik and Orr~\cite{CO1,CO2,CO3}, and our results demonstrate the fundamental importance of the concept in the theory of $Q$-polynomial distance-regular graphs; cf.~Theorems~\ref{thm1} and~\ref{thm2}. We may remark that our results are also relevant to recent work by Mazzocco~\cite{Mazzocco2014SIGMA, Mazzocco2016N}; cf.~Remarks~\ref{Mazzocco's work 1} and~\ref{Mazzocco's work 2}.

The layout of the paper is as follows. In Section~\ref{S:DRG}, we recall some background concerning $Q$-polynomial distance-regular graphs and their Terwilliger algebras. In Section~\ref{S:DPG}, we recall the dual polar graphs and their properties. We f\/ix a dual polar graph $\Gamma$ with diameter $D \geqslant 3$, having as vertices the maximal isotropic subspaces of a f\/inite-dimensional vector space over the f\/inite f\/ield $\mathbb{F}_q$ equipped with a non-degenerate form (alternating, quadratic, or Hermitian) with Witt index~$D$. We f\/ix a maximal clique $C$ of~$\Gamma$, and discuss the Terwilliger algebra of $\Gamma$ \emph{with respect to} $C$ in the sense of Suzuki~\cite{HS}. In Section~\ref{S:algT}, we also f\/ix a vertex $x \in C$ and def\/ine the \emph{generalized Terwilliger algebra} $\mathbf{T}=\mathbf{T}(x,C)$ of~$\Gamma$. We then def\/ine from $x$ and $C$ a $2D$-dimensional subspace~$\mathbf{W}$ of the standard module for $\mathbf{T}$, and show that it is an irreducible $\mathbf{T}$-module. In Section~\ref{S:pre LS}, we recall the basic theory concerning \emph{Leonard systems}, a certain linear algebraic framework for Leonard's theorem introduced by Terwilliger~\cite{PT.2001}. We focus on the class of Leonard systems corresponding to the dual $q$-Krawtchouk polynomials and discuss their properties. To the $\mathbf{T}$-module $\mathbf{W}$ we attach four Leonard systems, all of which belong to this class, and we study each of these Leonard systems in detail in Sections~\ref{S:T-mod W} and~\ref{S:T(C)-mod}. In Section~\ref{S:nil-DAHA}, we introduce a nil-DAHA $\overline{\mathcal{H}}$ of type $(C^{\vee}_1, C_1)$, and def\/ine a $2D$-dimensional representation $\overline{\mathcal{H}}\rightarrow\operatorname{End}(\mathbf{W})$. Our f\/irst main results of this paper describe how the $\mathbf{T}$-action on~$\mathbf{W}$ is related to the $\overline{\mathcal{H}}$-action; cf.~Theorems~\ref{thm1} and~\ref{thm2}. In Section~\ref{S:nsdqKpoly}, we def\/ine the non-symmetric dual $q$-Krawtchouk polynomials $\ell^{\pm}_i$ using the representation $\overline{\mathcal{H}}\rightarrow\operatorname{End}(\mathbf{W})$ and discuss a role of these Laurent polynomials in~$\mathbf{W}$. In fact, we will obtain two expressions for the~$\ell_i^{\pm}$. Recurrence relations involving at most four terms will also be given; cf.~Theorem~\ref{thm4}. The standard (Hermitian) inner product on $\mathbf{W}$ gives rise to an inner product on the $2D$-dimensional vector space to which the $\ell^{\pm}_i$ belong, which ultimately leads in Section~\ref{S:orth rels} to our second main result of this paper, i.e., a~combinatorial description of the orthogonality relations for the~$\ell^{\pm}_i$; cf.~Theorem \ref{thm3}. We note that our results do not depend essentially on the particular choice of the pair~$x$,~$C$, and that all the formulas are described in terms of~$q$,~$D$, and one other scalar~$e$ which we assign to $\Gamma$ based on the type of the non-degenerate form.

Throughout this paper, we use the following notation. For a given non-empty f\/inite set $X$, let $\operatorname{Mat}_X(\mathbb{C})$ be the $\mathbb{C}$-algebra consisting of the complex square matrices indexed by~$X$. Let $V=V_X$ be the $\mathbb{C}$-vector space consisting of the complex column vectors indexed by~$X$. We endow $V$ with the inner product $\langle u,v\rangle =u^t\overline{v}$ for $u,v \in V$, where $^t$ denotes transpose and $\bar{ \ }$ denotes complex conjugate. We abbreviate $\|u\|^2 = \langle u,u\rangle$ for all $u \in V$. For every $y\in X$, let~$\hat{y}$ be the vector in~$V$ with a~$1$ in the $y$-coordinate and~$0$ elsewhere. For a subset $Y\subset X$, let~$\hat{Y}=\sum\limits_{y\in Y}\hat{y} \in V$ denote its (column) characteristic vector. A Laurent polynomial $f(\eta)\in \mathbb{C}[\eta,\eta^{-1}]$ in the variable $\eta$ is said to be \emph{symmetric} if $f(\eta)=f(\eta^{-1})$, and \emph{non-symmetric} otherwise. Note that the symmetric Laurent polynomials are precisely the polynomials in $\xi:=\eta+\eta^{-1}$. Let $q$ be a prime power. For $r\in \mathbb{C}$ and an integer $n \geqslant 0$, let
\begin{gather*}
	(r;q)_n = (1-r)(1-rq)\cdots\big(1-rq^{n-1}\big), \qquad \gauss{n}{1}=\gauss{n}{1}_{\!q}= \frac{q^n-1}{q-1}.
\end{gather*}
For $r_1, r_2, \dots, r_{k+1}, s_1, s_2, \dots, s_k \in \mathbb{C}$, let
\begin{align*}
	& {}_{k+1} \phi_k
	\left( \left.
	\begin{matrix}
	r_1, r_2, \dots, r_{k+1} \\
	s_1, s_2, \dots, s_k
	\end{matrix}
	\right| q,\eta \right)= \sum^{\infty}_{n=0} \frac{(r_1;q)_n (r_2;q)_n\cdots (r_{k+1};q)_n}{(s_1;q)_n (s_2;q)_n \cdots (s_k;q)_n}\frac{\eta^n}{(q;q)_n}.
\end{align*}

\section{Distance-regular graphs}\label{S:DRG}

Let $\Gamma$ be a f\/inite simple connected graph with vertex set $X$ and diameter $D$. For $x\in X$, let
\begin{gather*}
	\Gamma_i(x) = \{ y \in X \colon \partial(x,y)=i \}, \qquad 0 \leqslant i \leqslant D,
\end{gather*}
where $\partial$ denotes the path-length distance. We abbreviate $\Gamma(x):=\Gamma_1(x)$. We call $\Gamma$ \emph{distance-regular} if there are non-negative integers $a_i$, $b_i$, $c_i$, $0\leqslant i\leqslant D$, called the \emph{intersection numbers} of $\Gamma$, such that $b_D=c_0=0$, $b_{i-1}c_i\ne 0$, $1\leqslant i\leqslant D$, and
\begin{gather*}
	a_i=|\Gamma_i(x)\cap\Gamma(y)|, \qquad	b_i=|\Gamma_{i+1}(x)\cap\Gamma(y)|, \qquad	c_i=|\Gamma_{i-1}(x)\cap\Gamma(y)|
\end{gather*}
for every pair of vertices $x,y \in X$ with $\partial(x,y)=i$, where $\Gamma_{-1}(x)=\Gamma_{D+1}(x):=\varnothing$. Fig.~\ref{2x4-grid} shows a small example of a distance-regular graph with $D=3$.

\begin{figure}
\centering
\begin{tikzpicture}
[bit/.style={circle,inner sep=0mm,minimum size=1.3mm,draw=black}]
\node [bit] (1) {};
\node [bit] (2) [right=of 1] {};
\node [bit] (3) [right=of 2] {};
\node [bit] (4) [right=of 3] {};
\node [bit] (5) [below=of 1,yshift=-3mm] {};
\node [bit] (6) [right=of 5] {};
\node [bit] (7) [right=of 6] {};
\node [bit] (8) [right=of 7] {};
\draw (1.south east) -- (6.north west);
\draw (1.south east) -- (7.north west);
\draw (1.south east) -- (8.north west);
\draw (2.south west) -- (5.north east);
\draw (2.south east) -- (7.north west);
\draw (2.south east) -- (8.north west);
\draw (3.south west) -- (5.north east);
\draw (3.south west) -- (6.north east);
\draw (3.south east) -- (8.north west);
\draw (4.south west) -- (5.north east);
\draw (4.south west) -- (6.north east);
\draw (4.south west) -- (7.north east);
\end{tikzpicture}
\caption{The complement of the $2\times 4$-grid.}\label{2x4-grid}
\end{figure}
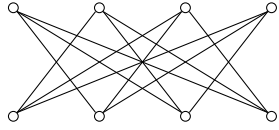

From now on, assume that $\Gamma$ is distance-regular. The $i^{\mathrm{th}}$ \emph{distance matrix} of $\Gamma$ is the $0$-$1$ matrix $A_i \in \operatorname{Mat}_X(\mathbb{C})$ such that $(A_i)_{x,y}=1$ if and only if $\partial(x,y)=i$. The \emph{Bose--Mesner algebra} of $\Gamma$ is the subalgebra $M$ of $\operatorname{Mat}_X(\mathbb{C})$ generated by the $A_i$. We note that $M$ is semisimple since it is closed under conjugate-transposition. Observe also that
\begin{gather*}
	A_1A_i=b_{i-1}A_{i-1}+a_iA_i+c_{i+1}A_{i+1}, \qquad 0\leqslant i\leqslant D,
\end{gather*}
from which it follows that, for $0 \leqslant i \leqslant D$, there is a polynomial $v_i \in \mathbb{C}[\xi]$ with $\deg (v_i)=i$ such that $v_i(A_1)=A_i$. It follows that the adjacency matrix $A:=A_1$ of $\Gamma$ generates $M$, and that the~$A_i$ form a basis for $M$. In particular, we have $\dim(M)=D+1$.

Since $A$ is real symmetric and generates $M$, it has $D+1$ mutually distinct real eigenvalues $\theta_0, \theta_1, \dots, \theta_D$, which we call the \emph{eigenvalues of} $\Gamma$.
We will always set $\theta_0:=b_0$, the valency (or degree) of $\Gamma$.
For $0 \leqslant i \leqslant D$, let $E_i \in \operatorname{Mat}_X(\mathbb{C})$ be the orthogonal projection
onto the eigenspace of $\theta_i$.
Then we have
\begin{gather*}
	A=\sum_{i=0}^D\theta_i E_i,
\end{gather*}
and the $E_i$ form another basis for $M$ consisting of the primitive idempotents, i.e., $E_iE_j=\delta_{i,j}E_i$, $\sum\limits_{i=0}^DE_i=I$ (the identity matrix). We note that $E_0V=\mathbb{C}\hat{X}$. Observe that~$M$ is also closed under entrywise multiplication, denoted $\circ$.
We say that $\Gamma$ is $Q$-\emph{polynomial} with respect to the ordering $\{E_i\}_{i=0}^D$ (or $\{\theta_i\}_{i=0}^D$) if there are scalars $a_i^*$, $b_i^*$, $c_i^*$, $0\leqslant i\leqslant D$, such that $b_D^*=c_0^*=0$, $b_{i-1}^*c_i^*\ne 0$, $1\leqslant i\leqslant D$, and
\begin{gather*}
	E_1\circ E_i=\frac{1}{|X|}(b_{i-1}^*E_{i-1}+a_i^*E_i+c_{i+1}^*E_{i+1}), \qquad 0\leqslant i\leqslant D,
\end{gather*}
where we set $b_{-1}^*E_{-1}=c_{D+1}^*E_{D+1}:=0$. If this is the case, then for $0\leqslant i\leqslant D$, there is a~polynomial $v^*_i \in \mathbb{C}[\xi]$ with $\deg (v^*_i)=i$ such that $v^*_i(|X|E_1)=|X|E_i$, where the multiplication is under~$\circ$. In particular, if we write
\begin{gather}\label{E_1}
	E_1=\frac{1}{|X|}\sum_{i=0}^D\theta_i^* A_i,
\end{gather}
then the $\theta_i^*$ are (real and) mutually distinct. Note also that $\theta_0^*=\operatorname{trace}(E_1)=\operatorname{rank}(E_1)$.

Assume that $\Gamma$ is $Q$-polynomial with respect to the ordering $\{E_i\}_{i=0}^D$. Fix a `base vertex' $x \in X$. For $0\leqslant i \leqslant D$, let $E^*_i=E^*_i(x):=\operatorname{diag}(A_i\hat{x}) \in \operatorname{Mat}_X(\mathbb{C})$. Note that $E^*_iE^*_j=\delta_{i,j}E^*_i$, $\sum\limits^D_{i=0}E^*_i=I$. The $E_i^*$ form a basis for the \emph{dual Bose--Mesner algebra} $M^*=M^*(x)$ of $\Gamma$ with respect to $x$.
We call $A^*=A^*(x):=|X|\operatorname{diag}(E_1\hat{x})\in \operatorname{Mat}_X(\mathbb{C})$ the \emph{dual adjacency matrix} of $\Gamma$ with respect to $x$. We have
\begin{gather*}
	A^*=\sum^D_{i=0}\theta^*_iE^*_i,
\end{gather*}
so that $A^*$ generates $M^*$, and that the $\theta_i^*$ are the eigenvalues of $A^*$, which we call the \emph{dual eigenvalues} of $\Gamma$.
The \emph{Terwilliger} (or \emph{subconstituent}) \emph{algebra} $T=T(x)$ with respect to $x$ is the subalgebra of $\operatorname{Mat}_X(\mathbb{C})$ generated by $M$, $M^*$ \cite{PT.1992(1),PT.1993(1),PT.1993(2)}.
We note that $T$ is again semisimple, is generated by $A$, $A^*$, and that any two non-isomorphic irreducible $T$-modules in $V$ are orthogonal.
The following are relations in $T$ (cf.~\cite[Lemma~3.2]{PT.1992(1)}):
\begin{gather}\label{3products;x}
	E_i^*AE_j^*=E_iA^*E_j= 0 \quad \text{if} \ \ |i-j|>1, \qquad 0\leqslant i,j\leqslant D.
\end{gather}
Observe also that $E_i^*\hat{X}=A_i\hat{x}$, $0\leqslant i\leqslant D$, from which it follows that the $(D+1)$-dimensional subspace
\begin{gather}\label{primary T(x)-module}
	M\hat{x}=M^*\hat{X}=\bigoplus_{i=0}^D \mathbb{C} A_i\hat{x}=\bigoplus_{i=0}^D \mathbb{C} E_i\hat{x}
\end{gather}
of $V$ is an irreducible $T$-module, called the \emph{primary} $T$-module. We note that
\begin{gather}\label{Phi-std}
	A.A_i\hat{x}=b_{i-1}A_{i-1}\hat{x}+a_iA_i\hat{x}+c_{i+1}A_{i+1}\hat{x}, \qquad A^*\!.A_i\hat{x}=\theta_i^*A_i\hat{x}, \qquad 0\leqslant i\leqslant D.
\end{gather}

We refer the reader to \cite{BI,BCN,DKT,CG} for more detailed information.

\section{Dual polar graphs}\label{S:DPG}

Let $D$ be a positive integer. Let $\mathbb{V}$ be one of the following spaces over the f\/inite f\/ield $\mathbb{F}_q$ equipped with a non-degenerate form:\footnote{The scalar $e$ is from \cite[Section~9.4]{BCN} and is assigned to $\mathbb{V}$ to give unif\/ied descriptions to various formulas regarding the dual polar spaces.}
\begin{center}
{
\renewcommand{\arraystretch}{1.2}
\begin{tabular}{cccc}
\hline
space & dimension & form & $e$ \\
\hline \hline
$[C_D(q)]$ & $2D$ & alternating & 1 \\
$[B_D(q)]$ & $2D+1$ & quadratic & 1 \\
$[D_D(q)]$ & $2D$ & quadratic (maximal Witt index $D$) & 0 \\
$[{}^{2\!}D_{D+1}(q)]$ & $2D+2$ & quadratic (non-maximal Witt index $D$) & 2 \\
$[{}^{2\!}A_{2D}(r)]$ & $2D+1$ & Hermitian ($q=r^2$) & $\frac{3}{2}$ \\
$[{}^{2\!}A_{2D-1}(r)]$ & $2D$ & Hermitian ($q=r^2$) & $\frac{1}{2}$ \\
\hline
\end{tabular}
}
\end{center}
A subspace of $\mathbb{V}$ is called (\emph{totally}) \emph{isotropic} if the form vanishes completely on it. We note that maximal isotropic subspaces have dimension $D$. Let $X$ be the set of all maximal isotropic subspaces of $\mathbb{V}$. The \emph{dual polar graph} (on $\mathbb{V}$) has vertex set $X$, where two vertices $x$, $y$ are adjacent if and only if $\dim(x\cap y)=D-1$; cf.~\cite[Section~9.4]{BCN}. We have $\partial(x,y)=D-\dim(x\cap y)$ in general, so that the diameter equals~$D$.

\begin{Assumption}
For the rest of this paper, we will always assume that $\Gamma$ is a dual polar graph with diameter $D \geqslant 3$.
\end{Assumption}

First we summarize some results that we need. The graph $\Gamma$ is distance-regular with intersection numbers
\begin{gather}\label{int.num;x}
	a_i = \big(q^e-1\big) \gauss{i}{1}, \qquad b_i = q^{i+e} \gauss{D-i}{1}, \qquad c_i = \gauss{i}{1}, \qquad 0\leqslant i\leqslant D.
\end{gather}
Note that $a_i = c_ia_1$, $0 \leqslant i \leqslant D$. The eigenvalues of $\Gamma$ are given by
\begin{gather}\label{e-val}
	\theta_i = q^e \gauss{D-i}{1} - \gauss{i}{1}, \qquad 0\leqslant i\leqslant D,
\end{gather}
and $\Gamma$ is $Q$-polynomial with respect to the ordering\footnote{The dual polar graph on $[{}^{2\!}A_{2D-1}(r)]$ has another $Q$-polynomial ordering, which is $\theta_0,\theta_D,\theta_1,\theta_{D-1},\dots$ in terms of the natural ordering $\theta_0>\theta_1>\dots>\theta_D$; cf.~\cite[p.~304]{BI}. However, it turns out that the maximal cliques do not behave well with this ordering (cf.~\cite[Theorem~8.8]{Tanaka2011EJC}), so that we will not pay attention to it in this paper.} $\{\theta_i\}_{i=0}^D$, where $\theta_0>\theta_1>\dots>\theta_D$. Moreover, the corresponding dual eigenvalues are given by
\begin{gather}\label{dual-e-val}
	\theta^*_i = \frac{q(1+q^{D+e-2})}{1-q}
		+ \frac{q(1+q^{D+e-2})(1+q^{D+e-1})}{(q-1)(1+q^{e-1})} q^{-i} ,\qquad 0\leqslant i\leqslant D.
\end{gather}
See \cite[Theorem~9.4.3]{BCN} and \cite[Lemma 16.5]{CW}. The dual polar graph $\Gamma$ is an example of a \emph{regular near polygon} (cf.~\cite[Section~6.4]{BCN}), which means that $\Gamma$ does not have
\begin{center}
\begin{tikzpicture}
[bit/.style={circle,inner sep=0mm,minimum size=1.3mm,draw=black}]
\node [bit] (1) {};
\node [bit] (2) [above right=of 1,yshift=-4mm] {};
\node [bit] (3) [below right=of 1,yshift=4mm] {};
\node [bit] (4) [below right=of 2,yshift=4mm] {};
\draw (1.north east) -- (2.south west);
\draw (1.south east) -- (3.north west);
\draw (2.south) -- (3.north);
\draw (2.south east) -- (4.north west);
\draw (3.north east) -- (4.south west);
\end{tikzpicture}
\end{center}
(i.e., $K_{1,1,2}$) as an induced subgraph, and that for every $y\in X$ and a maximal clique $C$, there is a unique $z\in C$ nearest to $y$, provided that $\partial(y,C)<D$.
Note that the former condition implies that every edge lies in a unique maximal clique.
We also note that $\Gamma$ is more specif\/ically a \emph{regular near $2D$-gon},\footnote{The identity $a_i = c_ia_1$, $0 \leqslant i \leqslant D$, mentioned above is a consequence of this; cf.~\cite[Theorem~6.4.1]{BCN}.} i.e., there is in fact no vertex $y$ at distance $D$ from $C$.

Let $C$ be a maximal clique in $\Gamma$.
By the above comments, we have
\begin{gather}\label{clique number}
	|C|=a_1+2=1+q^e,
\end{gather}
which attains the Hof\/fman bound $1-\theta_0\theta_D^{-1}$ (cf.~\cite[Proposition 4.4.6]{BCN}). In other words, $C$ is a~so-called \emph{Delsarte clique}. For $0 \leqslant i \leqslant D-1$, def\/ine the $i^{\mathrm{th}}$ \emph{distance neighbor} of $C$ by
\begin{gather*}
	C_i = \{ y\in X\colon \partial(y,C)=i\}.
\end{gather*}
Then we have
\begin{gather}\label{A_iC}
	A_i\hat{C}=\hat{C}_i+q^e\hat{C}_{i-1}, \qquad 0\leqslant i\leqslant D,
\end{gather}
where $C_{-1}=C_D:=\varnothing$, from which it follows that
\begin{gather}\label{completely regular}
	M\hat{C}=\bigoplus_{i=0}^{D-1}\mathbb{C}\hat{C}_i.
\end{gather}
In other words, $\{C_i\}^{D-1}_{i=0}$ is an \emph{equitable} partition of $X$.
In particular, there are non-negative integers $\widetilde{a}_i$, $\widetilde{b}_i$, $\widetilde{c}_i$, $0 \leqslant i \leqslant D-1$, called the \emph{intersection numbers} of $C$, such that $\widetilde{b}_{D-1}=\widetilde{c}_0=0$, $\widetilde{b}_{i-1}\widetilde{c}_i\ne 0$, $1\leqslant i\leqslant D-1$, and
\begin{gather*}
	\widetilde{a}_i = |C_i\cap\Gamma(y)|, \qquad \widetilde{b}_i = |C_{i+1}\cap\Gamma(y)|, \qquad \widetilde{c}_i = |C_{i-1}\cap\Gamma(y)|
\end{gather*}
for every $y \in C_i$. We may remark that a clique in a distance-regular graph satisf\/ies \eqref{completely regular} precisely when it is a Delsarte clique; cf.~\cite[Section~13.7]{CG}.

\begin{Lemma}\label{rels;int.num}
The following $(i)$, $(ii)$ hold:
\begin{enumerate}\itemsep=0pt
\item[$(i)$] $\widetilde{c}_i = c_i$, $1 \leqslant i \leqslant D-1$.
\item[$(ii)$] $\widetilde{b}_i = b_{i+1}$, $0 \leqslant i \leqslant D-2$.
\end{enumerate}
\end{Lemma}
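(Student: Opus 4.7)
The plan is to apply the adjacency matrix $A$ to both sides of~\eqref{A_iC} and compute $AA_i\hat{C}$ in two ways. On the one hand, combining the distance-regular three-term recurrence $AA_i = b_{i-1}A_{i-1} + a_iA_i + c_{i+1}A_{i+1}$ with~\eqref{A_iC} yields
\begin{gather*}
AA_i\hat{C} = b_{i-1}\bigl(\hat{C}_{i-1}+q^e\hat{C}_{i-2}\bigr) + a_i\bigl(\hat{C}_i+q^e\hat{C}_{i-1}\bigr) + c_{i+1}\bigl(\hat{C}_{i+1}+q^e\hat{C}_i\bigr).
\end{gather*}
On the other hand, applying~\eqref{A_iC} first gives $AA_i\hat{C} = A\hat{C}_i + q^e A\hat{C}_{i-1}$, and expanding each summand via the equitable-partition relation $A\hat{C}_j = \widetilde{b}_{j-1}\hat{C}_{j-1} + \widetilde{a}_j\hat{C}_j + \widetilde{c}_{j+1}\hat{C}_{j+1}$ yields a second expression whose coefficients of $\hat{C}_{i+1},\hat{C}_i,\hat{C}_{i-1},\hat{C}_{i-2}$ involve the intersection numbers of~$C$.

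Because $\hat{C}_0,\hat{C}_1,\dots,\hat{C}_{D-1}$ have pairwise disjoint supports, they are linearly independent, so the coefficients of each $\hat{C}_j$ can be equated in the two expressions above. Reading off the coefficient of $\hat{C}_{i+1}$ gives $\widetilde{c}_{i+1}=c_{i+1}$, which is~(i); reading off the coefficient of $\hat{C}_{i-2}$ gives $q^e\widetilde{b}_{i-2}=q^eb_{i-1}$, hence $\widetilde{b}_{i-2}=b_{i-1}$, which is~(ii) after reindexing $i\mapsto j+2$. The only point requiring care is the handling of the boundary indices $i\in\{0,1,D\}$, where some of the $\hat{C}_j$ above vanish or some of the intersection numbers of~$C$ are not defined; a quick separate check of the small cases (for example, applying the same comparison at $i=0$, which reduces to $A\hat{C}_0=\hat{C}_1+q^e\hat{C}_0$) supplies the missing identities. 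I do not anticipate any substantial obstacle beyond this straightforward bookkeeping.
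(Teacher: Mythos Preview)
Your argument is correct. Comparing the coefficient of $\hat{C}_{i+1}$ for $0\leqslant i\leqslant D-2$ yields $\widetilde{c}_{i+1}=c_{i+1}$, covering all of~(i), and comparing the coefficient of $\hat{C}_{i-2}$ for $2\leqslant i\leqslant D$ yields $\widetilde{b}_{i-2}=b_{i-1}$, covering all of~(ii); in fact the two ranges already suffice, so the extra boundary checks you mention are harmless but not strictly needed.

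Your route, however, is genuinely different from the paper's. The paper argues combinatorially at the level of individual vertices: for $y\in C_i$ it invokes the regular near polygon property to single out the unique $z\in C$ with $\partial(y,z)=i$, and then observes the set equalities $C_{i-1}\cap\Gamma(y)=\Gamma_{i-1}(z)\cap\Gamma(y)$ and $C_{i+1}\cap\Gamma(y)=\Gamma_{i+2}(z')\cap\Gamma(y)$ for any $z'\in C\setminus\{z\}$, from which the two identities are immediate. You instead work algebraically with characteristic vectors, using~\eqref{A_iC} (which already packages the near-polygon structure) together with the two three-term recurrences and linear independence of the $\hat{C}_j$. The paper's argument is shorter and exposes the geometric reason behind the identities; your approach trades that transparency for a mechanical computation that avoids picking specific vertices and would transplant more readily to settings where one has an analogue of~\eqref{A_iC} but no clean pointwise description.
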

\begin{proof}
(i) Let $y \in C_i$. Recall that there is a unique $z$ in $C\cap \Gamma_i(y)$ since $\Gamma$ is a regular near polygon. Then we have $C_{i-1}\cap \Gamma(y)=\Gamma_{i-1}(z)\cap\Gamma(y)$, and the result follows.

(ii) With the above notation, pick any $z'\in C$ with $z'\ne z$. Then $z'\in\Gamma_{i+1}(y)$, and we have $C_{i+1}\cap \Gamma(y)=\Gamma_{i+2}(z')\cap\Gamma(y)$, as desired.
\end{proof}

Using \eqref{int.num;x}, Lemma \ref{rels;int.num}, and $\widetilde{a}_i + \widetilde{b}_i + \widetilde{c}_i = \theta_0$, we have
\begin{gather}\label{int num C;q}
	\widetilde{a}_i = q^e \gauss{i+1}{1} - \gauss{i}{1}, \qquad \widetilde{b}_i = q^{i+1+e} \gauss{D-i-1}{1}, \qquad \widetilde{c}_i = \gauss{i}{1}, \qquad 0 \leqslant i \leqslant D-1.
\end{gather}

We now recall the \emph{Terwilliger algebra} of $\Gamma$ \emph{with respect to} $C$ in the sense of Suzuki~\cite{HS}.
For $0 \leqslant i \leqslant D-1$, let $\widetilde{E}^*_i=\widetilde{E}^*_i(C):=\operatorname{diag}(\hat{C}_i) \in \operatorname{Mat}_X(\mathbb{C})$. Note that $\widetilde{E}^*_i \widetilde{E}^*_j = \delta_{i,j}\widetilde{E}^*_i$, $\sum\limits^{D-1}_{i=0}\widetilde{E}^*_i = I$. The $\widetilde{E}_i^*$ form a basis for the \emph{dual Bose--Mesner algebra} $\widetilde{M}^*=\widetilde{M}^*(C)$ of $\Gamma$ with respect to $C$. The \emph{dual adjacency matrix} of $\Gamma$ with respect to $C$ is def\/ined by (cf.~\cite[equation~(50)]{JHL})
\begin{gather*}
	\widetilde{A}^*=\widetilde{A}^*(C):=\frac{|X|}{|C|}\operatorname{diag}(E_1\hat{C})=\frac{1}{|C|}\sum_{y\in C}A^*(y).
\end{gather*}
From \eqref{E_1}, \eqref{clique number}, and \eqref{A_iC}, it follows that
\begin{gather}\label{E_1C}
	\frac{1}{|C|}E_1\hat{C} = \frac{1}{|X|}\sum^{D-1}_{i=0}\widetilde{\theta}^*_i\hat{C}_i,
\end{gather}
where
\begin{gather*}
	\widetilde{\theta}^*_i=\frac{1}{1+q^e}\theta^*_i+\frac{q^e}{1+q^e}\theta^*_{i+1}, \qquad 0\leqslant i\leqslant D-1,
\end{gather*}
so that (cf.~\cite[Lemma 4.11]{JHL})
\begin{gather*}
	\widetilde{A}^* = \sum^{D-1}_{i=0}\widetilde{\theta}^*_i\widetilde{E}^*_i.
\end{gather*}
By \eqref{dual-e-val}, we have
\begin{gather}\label{dual-e-val;tilde}
	\widetilde{\theta}^*_i = \frac{q(1+q^{D+e-2})}{1-q} +
			\frac{q(1+q^{D+e-2})(1+q^{D+e-1})}{(q-1)(1+q^e)} q^{-i}, \qquad 0\leqslant i\leqslant D-1.
\end{gather}
In particular, $\widetilde{A}^*$ has $D$ mutually distinct real eigenvalues and hence generates $\widetilde{M}^*$. The \emph{Terwilliger algebra} $\widetilde{T}=\widetilde{T}(C)$ with respect to $C$ is the subalgebra of $\operatorname{Mat}_X(\mathbb{C})$ generated by~$M$,~$\widetilde{M}^*$.
We note that $\widetilde{T}$ is semisimple and is generated by $A$, $\widetilde{A}^*$. By virtue of \eqref{3products;x}, the following are relations in $\widetilde{T}$:
\begin{gather}\label{3products;C}
	\widetilde{E}_i^*A\widetilde{E}_j^*=E_i\widetilde{A}^*E_j= 0 \quad \text{if} \ \ |i-j|>1, \qquad 0\leqslant i,j\leqslant D,
\end{gather}
where we set $\widetilde{E}_D^*:=0$ for convenience. The subspace \eqref{completely regular} of $V$ is an irreducible $\widetilde{T}$-module with dimension $D$, called the \emph{primary} $\widetilde{T}$-module. We note that
\begin{gather}\label{Phit-std}
	A.\hat{C}_i=\widetilde{b}_{i-1}\hat{C}_{i-1}+\widetilde{a}_i\hat{C}_i+\widetilde{c}_{i+1}\hat{C}_{i+1}, \qquad \widetilde{A}^*\!.\hat{C}_i=\widetilde{\theta}_i^*\hat{C}_i, \qquad 0\leqslant i\leqslant D-1.
\end{gather}

\begin{Remark}The Bose--Mesner algebra $M$ coincides in this case with the commutant of the corresponding classical group acting on~$X$, whereas the Terwilliger algebras $T$, $\widetilde{T}$ are subalgebras of those of maximal parabolic subgroups. See also \cite{Stanton1980AJM,Stanton1981GD}.
\end{Remark}

\section{The algebra $\mathbf{T}$} \label{S:algT}

We continue to discuss the dual polar graph $\Gamma$.

\begin{Assumption}For the rest of this paper, we will f\/ix a vertex $x \in X$ and a maximal clique~$C$ containing $x$.
\end{Assumption}

\begin{Definition}[{\cite[Def\/inition 5.20]{JHL}}]The \emph{generalized Terwilliger algebra} of $\Gamma$ with respect to~$x$, $C$ is the subalgebra $\mathbf{T}=\mathbf{T}(x,C)$ of $\operatorname{Mat}_X(\mathbb{C})$ generated by $T=T(x)$, $\widetilde{T}=\widetilde{T}(C)$.
\end{Definition}

We note that $\mathbf{T}$ is semisimple and is generated by $A$, $A^*$, and $\widetilde{A}^*$, where $A^*\widetilde{A}^*=\widetilde{A}^*A^*$.
We now ref\/ine the equitable partition $\{C_i\}^{D-1}_{i=0}$ of $X$ as follows
\begin{gather*}
	C^-_i =\Gamma_i(x) \cap C_i, \qquad C^+_i = \Gamma_{i+1}(x) \cap C_i, \qquad 0 \leqslant i \leqslant D-1.
\end{gather*}
See Fig.~\ref{new partition when D=4}. For notational convenience, we set
\begin{gather}\label{empty in some cases}
	C^-_{-1}=C^+_{-1}=C^-_D=C^+_D:=\varnothing.
\end{gather}
Observe that $C_i = C^-_i \cup C^+_i$, $0 \leqslant i \leqslant D-1$, and that $\Gamma_i(x)=C^{+}_{i-1} \cup C^-_i$, $0 \leqslant i \leqslant D$.

\begin{figure}
\centering
\scalemath{0.6}{
\begin{tikzpicture}
 [scale=1,thick,auto=left,every node/.style={circle,draw}]
 \node (n1) at (0,0) {$C^-_0$};
 \node (n2) at (0,2) {$C^+_0$};
 \node (n3) at (2,2) {$C^-_1$};
 \node (n4) at (2,4) {$C^+_1$};
 \node (n5) at (4,4) {$C^-_2$};
 \node (n6) at (4,6) {$C^+_2$};
 \node (n7) at (6,6) {$C^-_3$};
 \node (n8) at (6,8) {$C^+_3$};

 \node[fill=black!10] (c0) at (0,-2.5) {${~}C^{~}_0$};
 \node[fill=black!10] (c1) at (2,-2.5) {${~}C^{~}_1$};
 \node[fill=black!10] (c2) at (4,-2.5) {${~}C^{~}_2$};
 \node[fill=black!10] (c3) at (6,-2.5) {${~}C^{~}_3$};

 \node[fill=blue!20] (r0) at (-2.5,0) {${~}\Gamma_0$};
 \node[fill=blue!20] (r1) at (-2.5,2) {${~}\Gamma_1$};
 \node[fill=blue!20] (r2) at (-2.5,4) {${~}\Gamma_2$};
 \node[fill=blue!20] (r3) at (-2.5,6) {${~}\Gamma_3$};
 \node[fill=blue!20] (r4) at (-2.5,8) {${~}\Gamma_4$};

 \foreach \from/\to in {n1/n2,n3/n4,n5/n6,n7/n8,n1/n3,n3/n5,n5/n7,n2/n4,n4/n6,n6/n8, c0/c1,c1/c2,c2/c3, r0/r1, r1/r2, r2/r3, r3/r4}
 \draw (\from) -- (\to)[line width=0.7mm] ;

 \draw (c0) -- (n1) [dashed];
 \draw (c1) -- (n3) [dashed];
 \draw (c2) -- (n5) [dashed];
 \draw (c3) -- (n7) [dashed];

 \draw (n2) -- (n3) [dashed];
 \draw (n4) -- (n5) [dashed];
 \draw (n6) -- (n7) [dashed];

 \draw (r0) -- (n1) [dashed];
 \draw (r1) -- (n2) [dashed];
 \draw (r2) -- (n4) [dashed];
 \draw (r3) -- (n6) [dashed];
 \draw (r4) -- (n8) [dashed];

\end{tikzpicture}}
\caption{The partition $\{C^{\pm}_i\}^{D-1}_{i=0}$ when $D=4$.}\label{new partition when D=4}
\end{figure}
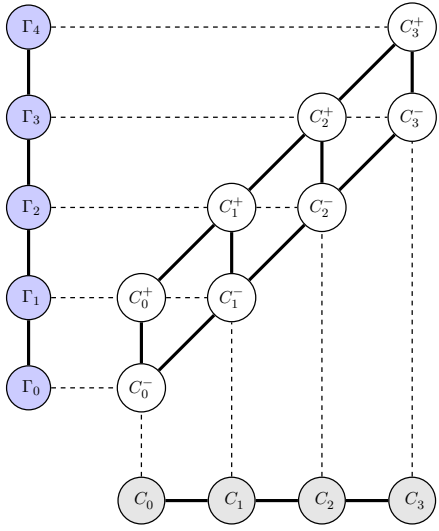

\begin{Lemma} \label{int.num;W}
The following $(i)$, $(ii)$ hold:
\begin{enumerate}\itemsep=0pt
\item[$(i)$] For $0 \leqslant i \leqslant D-1$ and $z \in C^-_i$, we have
\begin{center}
{
\renewcommand{\arraystretch}{1.2}
\begin{tabular}{c||@{\quad}c@{\qquad}c@{\qquad}c@{\qquad}c@{\qquad}c@{\quad}}
$Y$ & $C^-_{i-1}$ & $C^+_{i-1}$ & $C^-_i$ & $C^+_i$ & $C^-_{i+1}$\\
\hline
$|\Gamma(z) \cap Y|$ & $c_i$ & $0$ & $a_i$ & $b_i - b_{i+1}$ & $b_{i+1}$
\end{tabular}
}.
\end{center}

\item[$(ii)$] For $0 \leqslant i \leqslant D-1$ and $z \in C^+_i$, we have
\begin{center}
{
\renewcommand{\arraystretch}{1.2}
\begin{tabular}{c||@{\quad}c@{\qquad}c@{\qquad}c@{\qquad}c@{\qquad}c@{\quad}}
$Y$ & $C^+_{i-1}$ & $C^-_i$ & $C^+_i$ & $C^-_{i+1}$ & $C^+_{i+1}$\\
\hline
$|\Gamma(z) \cap Y|$ & $c_i$ & $c_{i+1}-c_i$ & $a_{i+1}$ & $0$ & $b_{i+1}$
\end{tabular}}.
\end{center}
\end{enumerate}
In particular, the partition $\{C^{\pm}_i\}^{D-1}_{i=0}$ is again equitable.\footnote{See \cite[Proposition 1.3]{HS2007EJC} for a more general result.}
\end{Lemma}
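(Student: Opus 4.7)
The plan is to exploit the regular near polygon property throughout: for every $w\in X$ with $\partial(w,C)<D$ there is a unique $w^{\ast}\in C$ attaining $\partial(w,w^{\ast})=\partial(w,C)$, and any two distinct vertices of $C$ are adjacent. Since $i\leqslant D-1$, this will apply to every $z$ under consideration, and I would first record that the unique nearest vertex of $C$ to $z$ is $x$ itself when $z\in C_i^-$, whereas it is some $y\in C\setminus\{x\}$ (hence adjacent to $x$) when $z\in C_i^+$.

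For part~(i), fix $z\in C_i^-$. The first step is to show that any neighbor $w\in\Gamma(z)\cap C_{i-1}$ has a unique nearest vertex in $C$ at distance at most $i$ from $z$, which must therefore coincide with the unique nearest to $z$, namely $x$; this forces $\partial(w,x)=i-1$, so $w\in C_{i-1}^-$, and the entries $c_i,0$ follow using $\widetilde{c}_i=c_i$ from Lemma~\ref{rels;int.num}. Next, for $w\in\Gamma(z)\cap C_{i+1}$, the chain $i+1=\partial(w,C)\leqslant\partial(w,x)\leqslant\partial(w,z)+\partial(z,x)=i+1$ places $w$ in $C_{i+1}^-$, giving the $\widetilde{b}_i=b_{i+1}$ entry. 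Finally, the $a_i$ neighbors of $z$ in $\Gamma_i(x)$ cannot lie in $C_{i-1}$ by the first step, so they all land in $C_i^-$; the remaining $b_i-b_{i+1}$ neighbors in $\Gamma_{i+1}(x)\setminus C_{i+1}^-$ then fill $C_i^+$.

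Part~(ii) is strictly analogous, the new ingredient being $|\partial(w,x)-\partial(w,y)|\leqslant 1$ whenever $y\in C$. For $z\in C_i^+$, a neighbor $w\in C_{i-1}$ has unique nearest vertex in $C$ equal to $y$, so $\partial(w,y)=i-1$, while $\partial(w,x)\geqslant\partial(z,x)-1=i$ forces $w\in C_{i-1}^+$. The subtle step is the vanishing $|\Gamma(z)\cap C_{i+1}^-|=0$: for a hypothetical such $w$, $x$ would be its unique nearest vertex in $C$, yet $\partial(w,y)\leqslant\partial(w,z)+\partial(z,y)=i+1=\partial(w,C)$ would make $y$ equally nearest, contradicting uniqueness. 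Once this is in place, all $\widetilde{b}_i=b_{i+1}$ neighbors in $C_{i+1}$ must sit in $C_{i+1}^+$; splitting the $c_{i+1}$ neighbors in $\Gamma_i(x)$ and the $a_{i+1}$ neighbors in $\Gamma_{i+1}(x)$ among the two admissible slices that remain yields $c_{i+1}-c_i$ in $C_i^-$ and $a_{i+1}$ in $C_i^+$.

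The main care needed is the systematic interplay of the three distance functions $\partial(\cdot,x)$, $\partial(\cdot,C)$, and $\partial(\cdot,y)$ with the near-polygon uniqueness to rule out impossible slices; no substantive computation is required beyond the known intersection numbers \eqref{int.num;x} and \eqref{int num C;q}. Once the tables are established, the equitability assertion is immediate, since every listed count depends only on which of $C_i^-$ or $C_i^+$ contains $z$.
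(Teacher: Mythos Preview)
Your proposal is correct and follows essentially the same approach as the paper, which simply says ``Use Lemma~\ref{rels;int.num}''; you have written out in detail exactly the near-polygon/uniqueness argument (combined with $\widetilde{c}_i=c_i$, $\widetilde{b}_i=b_{i+1}$) that the paper leaves entirely to the reader.
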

\begin{proof}
Use Lemma \ref{rels;int.num}.
\end{proof}

\begin{Lemma}\label{|C|;q-exp}
We have
\begin{gather*}
	|C^-_i|=q^{ie}\prod^i_{n=1}\frac{q^D-q^n}{q^n-1}, \qquad |C^+_i|=q^{(i+1)e}\prod^i_{n=1}\frac{q^D-q^n}{q^n-1},
\qquad 0\leqslant i \leqslant D-1.
\end{gather*}
In particular, the $C^{\pm}_i$ are non-empty.
\end{Lemma}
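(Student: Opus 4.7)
The plan is to compute $|C^-_i|$ first by a double-counting argument based on the equitable partition of Lemma \ref{int.num;W}, and then derive $|C^+_i|$ from $|C^-_i|$ via a second double count. The non-emptiness will follow automatically because every factor in the resulting recurrence is positive.

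First I would observe that $C^-_0=\Gamma_0(x)\cap C_0=\{x\}\cap C=\{x\}$ (using $x\in C$), so $|C^-_0|=1$. Next, using Lemma \ref{int.num;W}(i), I count in two ways the number of edges between $C^-_i$ and $C^-_{i+1}$: each $z\in C^-_i$ contributes $b_{i+1}$ such edges, and each $z'\in C^-_{i+1}$ contributes $c_{i+1}$ such edges (the entry for $C^-_i$ in the $i+1$ row of the table). This yields the recurrence
\begin{gather*}
  |C^-_i|\,b_{i+1}=|C^-_{i+1}|\,c_{i+1},\qquad 0\leqslant i\leqslant D-2,
\end{gather*}
so that $|C^-_i|=\prod_{n=1}^{i}b_n/c_n$. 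Substituting \eqref{int.num;x}, each factor simplifies to
\begin{gather*}
  \frac{b_n}{c_n}=\frac{q^{n+e}(q^{D-n}-1)/(q-1)}{(q^n-1)/(q-1)}=\frac{q^e(q^D-q^n)}{q^n-1},
\end{gather*}
and taking the product over $1\leqslant n\leqslant i$ gives the desired expression for $|C^-_i|$.

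For $|C^+_i|$ I would double-count edges between $C^-_i$ and $C^+_i$: Lemma \ref{int.num;W}(i) gives $b_i-b_{i+1}$ edges per vertex in $C^-_i$, and Lemma \ref{int.num;W}(ii) gives $c_{i+1}-c_i$ edges per vertex in $C^+_i$, so
\begin{gather*}
  |C^-_i|\,(b_i-b_{i+1})=|C^+_i|\,(c_{i+1}-c_i).
\end{gather*}
A direct calculation from \eqref{int.num;x} gives $b_i-b_{i+1}=q^{i+e}$ and $c_{i+1}-c_i=q^i$, hence $|C^+_i|=q^e\,|C^-_i|$, which is exactly the stated formula.

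Non-emptiness is immediate: $|C^-_0|=1>0$, and the recurrence ratio $b_{n+1}/c_{n+1}$ is strictly positive for $0\leqslant n\leqslant D-2$, so $|C^-_i|>0$ for all $0\leqslant i\leqslant D-1$, and therefore $|C^+_i|=q^e|C^-_i|>0$ as well. There is no real obstacle here; the only thing to be careful about is that the double-counting identities are valid even when (a priori) one side might be empty, since in that case both sides vanish, but the forward induction from $|C^-_0|=1$ rules this out inductively.
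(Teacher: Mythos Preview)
Your proof is correct and uses essentially the same double-counting idea as the paper, which derives the recurrence $c_i|C_i^-|=b_i|C_{i-1}^-|$ from Lemma~\ref{int.num;W} exactly as you do. The one minor difference is that for $|C^+_i|$ the paper runs the \emph{parallel} recurrence $c_i|C_i^+|=b_i|C_{i-1}^+|$ (counting edges between $C^+_{i-1}$ and $C^+_i$) with the initial value $|C_0^+|=|C|-1=q^e$ from~\eqref{clique number}, whereas you instead count the edges between $C^-_i$ and $C^+_i$ to get $|C^+_i|=q^e|C^-_i|$ directly; both routes are equally short and yield the same formula.
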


\begin{proof}
It follows from Lemma \ref{int.num;W} that
\begin{gather*}
	c_i|C_i^-|=b_i|C_{i-1}^-|, \qquad c_i|C_i^+|=b_i|C_{i-1}^+|.
\end{gather*}
The result follows from this and \eqref{int.num;x}, together with $|C_0^-|=1$ and $|C_0^+|=q^e$ (cf.~\eqref{clique number}).
\end{proof}

Let $\mathbf{W}$ be the linear span of the $\hat{C}^{\pm}_i$.
Thus, $\mathbf{W}$ has the following ordered orthogonal basis:
\begin{gather}\label{1st basis}
	\mathcal{C} = \big\{\hat{C}^-_0, \hat{C}^+_0, \hat{C}^-_1, \hat{C}^+_1, \dots, \hat{C}^-_{D-1}, \hat{C}^+_{D-1}\big\}.
\end{gather}

\begin{Proposition}\label{W:irrT-mod}
The subspace $\mathbf{W}$ is an irreducible $\mathbf{T}$-module.
\end{Proposition}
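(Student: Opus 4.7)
The plan is to verify $\mathbf{T}$-invariance and irreducibility of $\mathbf{W}$ separately, using that $\mathbf{T}$ is generated by $A$, $A^*$, and $\widetilde{A}^*$. For invariance, I would first note that $A^*$ and $\widetilde{A}^*$ act diagonally in the basis $\mathcal{C}$: the inclusions $C_i^-\subseteq\Gamma_i(x)$, $C_i^+\subseteq\Gamma_{i+1}(x)$, and $C_i^\pm\subseteq C_i$ give $A^*\hat{C}_i^-=\theta_i^*\hat{C}_i^-$, $A^*\hat{C}_i^+=\theta_{i+1}^*\hat{C}_i^+$, and $\widetilde{A}^*\hat{C}_i^\pm=\widetilde{\theta}_i^*\hat{C}_i^\pm$ (cf.~\eqref{Phit-std}). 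For $A$, Lemma~\ref{int.num;W} asserts that $\{C_i^\pm\}$ is equitable, so reading off the two tables there I would record
\begin{align*}
A\hat{C}_i^- &= b_i\hat{C}_{i-1}^- + a_i\hat{C}_i^- + (c_{i+1}-c_i)\hat{C}_i^+ + c_{i+1}\hat{C}_{i+1}^-, \\
A\hat{C}_i^+ &= b_i\hat{C}_{i-1}^+ + (b_i-b_{i+1})\hat{C}_i^- + a_{i+1}\hat{C}_i^+ + c_{i+1}\hat{C}_{i+1}^+,
\end{align*}
where \eqref{empty in some cases} absorbs the boundary cases $i\in\{0,D-1\}$. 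In particular $A\hat{C}_i^\pm\in\mathbf{W}$, so $\mathbf{W}$ is $\mathbf{T}$-invariant.

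For irreducibility, I would exploit that $A^*$ and $\widetilde{A}^*$ commute and are simultaneously diagonal on $\mathcal{C}$, with joint eigenvalues $(\theta_i^*,\widetilde{\theta}_i^*)$ on $\hat{C}_i^-$ and $(\theta_{i+1}^*,\widetilde{\theta}_i^*)$ on $\hat{C}_i^+$. These $2D$ pairs are pairwise distinct, by distinctness of $\{\theta_i^*\}$ and $\{\widetilde{\theta}_i^*\}$ (cf.~\eqref{dual-e-val}, \eqref{dual-e-val;tilde}): two pairs can share a second coordinate only when they come from $\hat{C}_i^-$ and $\hat{C}_i^+$ with the same $i$, but then $\theta_i^*\neq\theta_{i+1}^*$. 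Therefore $\mathbb{C}[A^*,\widetilde{A}^*]$ contains the rank-one spectral projector $P_i^\epsilon$ onto each $\mathbb{C}\hat{C}_i^\epsilon$.

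To conclude, let $\mathbf{W}'\subseteq\mathbf{W}$ be any non-zero $\mathbf{T}$-submodule. Applying a suitable $P_i^\epsilon$ to a non-zero element of $\mathbf{W}'$ shows that some $\hat{C}_i^\epsilon$ lies in $\mathbf{W}'$. Alternately applying $A$ to extract new basis vectors from the expansions above and composing with the $P_j^{\epsilon'}$ to isolate them, we propagate along the ``transition graph'' on the $2D$ pairs $(i,\pm)$ whose edges are the non-zero off-diagonal terms in $A\hat{C}_i^\pm$. From \eqref{int.num;x}, together with $c_{i+1}-c_i=q^i$ and $b_i-b_{i+1}=q^{i+e}$, every one of those coefficients is non-zero for indices in range, and the transition graph is visibly connected. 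Hence $\mathbf{W}'$ contains all of $\mathcal{C}$ and so $\mathbf{W}'=\mathbf{W}$. The main delicate point, and the linchpin of the whole argument, is the distinctness of the $2D$ joint $(A^*,\widetilde{A}^*)$-eigenvalues — without it one could not separate the basis $\mathcal{C}$ using $\mathbb{C}[A^*,\widetilde{A}^*]$ alone — but it reduces to a quick inspection of \eqref{dual-e-val} and \eqref{dual-e-val;tilde}.
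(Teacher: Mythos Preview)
Your argument is correct, but it takes a genuinely different route from the paper's proof. For invariance the two proofs are essentially the same; the difference lies in the irreducibility step. The paper argues as follows: since $\mathbf{T}$ is semisimple, $\mathbf{W}$ is an orthogonal direct sum of irreducible $\mathbf{T}$-modules; one of these summands~$\mathbf{W}_0$ meets $E_0\mathbf{W}=\mathbb{C}\hat{X}$ non-trivially, so $\hat{X}\in\mathbf{W}_0$; and then $\mathbf{W}=M^*\widetilde{M}^*\hat{X}\subseteq\mathbf{W}_0$, since the products $E_i^*\widetilde{E}_j^*\hat{X}$ are precisely the vectors $\hat{C}_i^{\pm}$ (or zero). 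Thus the paper exploits semisimplicity together with the one-dimensionality of $E_0\mathbf{W}$ and the fact that $\hat{X}$ is a cyclic vector for $M^*\widetilde{M}^*$, avoiding any check of non-vanishing coefficients or graph connectivity.

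Your approach instead separates the basis $\mathcal{C}$ via the distinct joint $(A^*,\widetilde{A}^*)$-eigenvalues and then propagates through a connected transition graph built from the $A$-action. This is more elementary in that it never invokes semisimplicity of $\mathbf{T}$, and it makes the combinatorics of the $A$-action fully explicit; on the other hand it requires verifying that $c_{i+1}-c_i=q^i$ and $b_i-b_{i+1}=q^{i+e}$ are non-zero and that the resulting graph is connected, work the paper bypasses entirely. Note that your use of $\mathbb{C}[A^*,\widetilde{A}^*]$ to build the rank-one projectors can be replaced by the products $E_i^*\widetilde{E}_j^*\in\mathbf{T}$, which on $\mathbf{W}$ are exactly those projectors; this sidesteps any concern about how the polynomial acts off $\mathbf{W}$.
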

\begin{proof}
It is clear that $\mathbf{W}$ is closed under the actions of the $E_i^*$ and the $\widetilde{E}_i^*$.
Moreover, since $\{C^{\pm}_i\}^{D-1}_{i=0}$ is an equitable partition by Lemma \ref{int.num;W}, $\mathbf{W}$ is also $A$-invariant.
It follows that $\mathbf{W}$ is a $\mathbf{T}$-module.
We now show the irreducibility.
Since $\mathbf{T}$ is semisimple,
$\mathbf{W}$ is an orthogonal direct sum of irreducible $\mathbf{T}$-modules.
Among these $\mathbf{T}$-modules, there is one which is not orthogonal to $\mathbb{C}\hat{X}=E_0\mathbf{W}$, denoted by $\mathbf{W}_0$.
Then we have $0 \ne E_0\mathbf{W}_0 \subset E_0\mathbf{W}$, so that $\hat{X}\in E_0\mathbf{W}_0 \subset \mathbf{W}_0$, from which it follows that $\mathbf{W}=M^*\widetilde{M}^*\hat{X} \subset \mathbf{W}_0$.
Hence $\mathbf{W}_0=\mathbf{W}$, and the result follows.
\end{proof}

We call $\mathbf{W}$ the \emph{primary} $\mathbf{T}$-module.
Note that $\mathbf{W}$ contains both the primary $T$-module $M\hat{x}$ and the primary $\widetilde{T}$-module $M\hat{C}$.
Let $M\hat{x}^{\perp}$ (resp.~$M\hat{C}^{\perp}$) be the orthogonal complement of $M\hat{x}$ (resp.~$M\hat{C}$) in $\mathbf{W}$.
In Sections \ref{S:T-mod W} and \ref{S:T(C)-mod}, we will show that $M\hat{x}^{\perp}$ (resp.~$M\hat{C}^{\perp}$) is also an irreducible $T$-module (resp.~$\widetilde{T}$-module).
Thus, $\mathbf{W}$ decomposes in two ways:
\begin{align}
\mathbf{W} 	&= M\hat{x} \oplus M\hat{x}^{\perp} \quad ~
	\textrm{(orthogonal direct sum of irreducible $T$-modules)}\label{ods;T}\\
	&= M\hat{C} \oplus M\hat{C}^{\perp} \quad
	\textrm{(orthogonal direct sum of irreducible $\widetilde{T}$-modules)}.\label{ods;Ttil}
\end{align}

We end this section by describing the actions of $A$, $A^*$, and $\widetilde{A}^*$ on $\mathbf{W}$ in terms of the basis $\mathcal{C}$.

\begin{Lemma}\label{actA}
For $0 \leqslant i \leqslant D-1$, we have
\begin{gather*}
A.\hat{C}^-_i =
\frac{q^{D+e}-q^{i+e}}{q-1} \hat{C}^-_{i-1}
+ \big(q^e-1\big)\frac{q^i-1}{q-1} \hat{C}^-_i
+ q^i \hat{C}^+_i
+ \frac{q^{i+1}-1}{q-1} \hat{C}^-_{i+1},\\
A.\hat{C}^+_i =
\frac{q^{D+e}-q^{i+e}}{q-1}\hat{C}^+_{i-1}
+ q^{e+i}\hat{C}^-_i
+ \big(q^e-1\big)\frac{q^{i+1}-1}{q-1}\hat{C}^+_i
+ \frac{q^{i+1}-1}{q-1}\hat{C}^+_{i+1}.
\end{gather*}
\end{Lemma}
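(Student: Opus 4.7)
The plan is to exploit the fact, already established in Lemma \ref{int.num;W}, that $\{C^{\pm}_i\}_{i=0}^{D-1}$ is an equitable partition of $X$. This immediately guarantees that $A.\hat{C}^-_i$ and $A.\hat{C}^+_i$ are $\mathbb{C}$-linear combinations of the basis vectors in $\mathcal{C}$, so only the coefficients need to be identified.

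First I would observe the general recipe: for any two cells $Y, Y'$ of the partition, the coefficient $\kappa_{Y',Y}$ of $\hat{Y'}$ in $A.\hat{Y}$ equals $|\Gamma(y)\cap Y|$ for any (hence all) $y\in Y'$. A standard double-counting of edges between $Y$ and $Y'$ yields the reciprocity
\begin{equation*}
|Y'|\cdot\kappa_{Y',Y}=|Y|\cdot|\Gamma(z)\cap Y'| \quad\text{for any }z\in Y,
\end{equation*}
which lets one read every coefficient off the tables in Lemma \ref{int.num;W}, provided the cell sizes are known. Those sizes are supplied by Lemma \ref{|C|;q-exp}; the salient ratios are
\begin{equation*}
\frac{|C^+_i|}{|C^-_i|}=q^e,\qquad \frac{|C^-_i|}{|C^-_{i-1}|}=\frac{|C^+_i|}{|C^+_{i-1}|}=q^e\cdot\frac{q^D-q^i}{q^i-1}.
\end{equation*}

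Next I would specialize to $Y=C^-_i$. Lemma \ref{int.num;W}(i) tells us that $\kappa_{Y',Y}=0$ unless $Y'\in\{C^-_{i-1},C^-_i,C^+_i,C^-_{i+1}\}$, in which case $|\Gamma(z)\cap Y'|$ equals respectively $c_i$, $a_i$, $b_i-b_{i+1}$, $b_{i+1}$ for $z\in C^-_i$. Plugging these into the reciprocity formula and using \eqref{int.num;x} together with the size ratios above gives, after routine simplification, exactly the coefficients $\frac{q^{D+e}-q^{i+e}}{q-1}$, $(q^e-1)\frac{q^i-1}{q-1}$, $q^i$, $\frac{q^{i+1}-1}{q-1}$ stated in the lemma. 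The only slightly non-obvious step is the identity $b_i-b_{i+1}=q^{i+e}$, which follows from a direct $q$-arithmetic manipulation. The computation for $Y=C^+_i$, based on Lemma \ref{int.num;W}(ii), is entirely parallel; the extra factor $q^e$ in the size of $C^+_i$ (relative to $C^-_i$) produces the asymmetries $q^{i+e}\hat{C}^-_i$ and $(q^e-1)\frac{q^{i+1}-1}{q-1}\hat{C}^+_i$ visible in the second formula.

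Finally, I would make explicit use of the convention \eqref{empty in some cases} to absorb the boundary cases $i=0$ and $i=D-1$: the terms whose index falls outside $\{0,\dots,D-1\}$ are zero because the corresponding cells are empty, so the two displayed identities remain valid verbatim at the boundary. The only real obstacle is bookkeeping: matching the reciprocity factor $|Y|/|Y'|$ with the appropriate $q$-binomial ratio from Lemma \ref{|C|;q-exp}, and collapsing the resulting expressions via \eqref{int.num;x}; there is no conceptual difficulty beyond that.
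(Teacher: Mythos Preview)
Your argument is correct, but you have taken an unnecessary detour. The coefficient of $\hat{Y'}$ in $A.\hat{Y}$ is indeed $|\Gamma(y)\cap Y|$ for $y\in Y'$, and Lemma~\ref{int.num;W} already supplies these numbers directly: applying part~(i) at index $i-1$ gives $|\Gamma(y)\cap C^-_i|=b_i$ for $y\in C^-_{i-1}$; applying part~(ii) at index $i$ gives $|\Gamma(y)\cap C^-_i|=c_{i+1}-c_i$ for $y\in C^+_i$; and so on. Thus the paper obtains immediately
\[
A.\hat{C}^-_i = b_i\,\hat{C}^-_{i-1} + a_i\,\hat{C}^-_i + (c_{i+1}-c_i)\,\hat{C}^+_i + c_{i+1}\,\hat{C}^-_{i+1},
\]
and similarly for $A.\hat{C}^+_i$, after which one simply substitutes~\eqref{int.num;x}. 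Your reciprocity step and the appeal to the cell sizes in Lemma~\ref{|C|;q-exp} recover the same coefficients (e.g.\ $\frac{|C^-_i|}{|C^-_{i-1}|}\cdot c_i=b_i$), but the extra machinery is not needed since Lemma~\ref{int.num;W} at the neighbouring indices already gives the ``transposed'' entries you want. The end result and the underlying combinatorics are identical; the paper's version is just shorter.
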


\begin{proof}
From Lemma \ref{int.num;W} it follows that
\begin{gather*}
A.\hat{C}^-_i = b_i \hat{C}^-_{i-1}
+ a_i \hat{C}^-_i
+ (c_{i+1}-c_i) \hat{C}^+_i
+ c_{i+1} \hat{C}^-_{i+1},\\
A.\hat{C}^+_i = b_i\hat{C}^+_{i-1}
+ (b_i-b_{i+1})\hat{C}^-_i
+ a_{i+1}\hat{C}^+_i
+ c_{i+1}\hat{C}^+_{i+1}
\end{gather*}
for $0 \leqslant i \leqslant D-1$.
Evaluate the two identities using \eqref{int.num;x}.
\end{proof}

\begin{Lemma}\label{act.A*,A*t}
For $0 \leqslant i \leqslant D-1$, we have
\begin{gather*}
	A^*.\hat{C}^-_i = \theta_i^*\hat{C}^-_i, \qquad A^*.\hat{C}^+_i= \theta_{i+1}^*\hat{C}^+_i, \qquad \widetilde{A}^*.\hat{C}^{\pm}_i = \widetilde{\theta}_i^*\hat{C}^{\pm}_i.
\end{gather*}
\end{Lemma}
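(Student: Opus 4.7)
The plan is entirely immediate from the fact that both $A^*$ and $\widetilde{A}^*$ are \emph{diagonal} matrices (with respect to the standard basis of $V$), whose eigenvalues are constant on the relevant subconstituents. So the proof reduces to checking that each $\hat{C}^{\pm}_i$ lies in the appropriate eigenspace.

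First I would recall that $A^* = A^*(x) = \sum_{j=0}^{D} \theta_j^* E_j^*$ with $E_j^* = \operatorname{diag}(A_j\hat{x})$, so $A^*$ acts as the scalar $\theta_j^*$ on any vector supported on $\Gamma_j(x)$; equivalently, $E_j^* V = \operatorname{span}\{\hat{y} : y \in \Gamma_j(x)\}$. By the very definition of the refined partition, $C^{-}_i = \Gamma_i(x) \cap C_i \subseteq \Gamma_i(x)$ and $C^{+}_i = \Gamma_{i+1}(x) \cap C_i \subseteq \Gamma_{i+1}(x)$, so $\hat{C}^{-}_i \in E_i^* V$ and $\hat{C}^{+}_i \in E_{i+1}^* V$. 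This yields $A^*.\hat{C}^-_i = \theta_i^*\hat{C}^-_i$ and $A^*.\hat{C}^+_i = \theta_{i+1}^*\hat{C}^+_i$.

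For $\widetilde{A}^*$, the same reasoning applies with the decomposition $\widetilde{A}^* = \sum_{i=0}^{D-1} \widetilde{\theta}^*_i\widetilde{E}^*_i$ and $\widetilde{E}^*_i = \operatorname{diag}(\hat{C}_i)$, so $\widetilde{A}^*$ acts as $\widetilde{\theta}_i^*$ on vectors supported on $C_i$. Since $C^{\pm}_i \subseteq C_i$, both $\hat{C}^-_i$ and $\hat{C}^+_i$ lie in $\widetilde{E}^*_i V$, giving $\widetilde{A}^*.\hat{C}^{\pm}_i = \widetilde{\theta}_i^* \hat{C}^{\pm}_i$.

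There is no real obstacle here; the statement is a direct bookkeeping consequence of the definitions of the refined cells $C^{\pm}_i$ together with the spectral decompositions of $A^*$ and $\widetilde{A}^*$ recalled in Sections~\ref{S:DRG} and~\ref{S:DPG}. The only thing worth emphasizing in the write-up is the index shift for $\hat{C}^+_i$ (which lies in $\Gamma_{i+1}(x)$, not $\Gamma_i(x)$), explaining why the eigenvalue is $\theta^*_{i+1}$ rather than $\theta^*_i$.
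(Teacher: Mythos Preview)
Your proposal is correct and is precisely the unpacking of what the paper records as ``Clear.'' The paper's own proof consists of that single word, and your argument---using that $A^*$ and $\widetilde{A}^*$ are diagonal with eigenvalues constant on $\Gamma_j(x)$ and $C_j$, respectively, together with the inclusions $C_i^-\subseteq\Gamma_i(x)$, $C_i^+\subseteq\Gamma_{i+1}(x)$, $C_i^{\pm}\subseteq C_i$---is exactly the intended verification.
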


\begin{proof}
Clear.
\end{proof}

\section[Leonard systems of dual $q$-Krawtchouk type]{Leonard systems of dual $\boldsymbol{q}$-Krawtchouk type}\label{S:pre LS}

Let $d$ be a positive integer, and let $\mathsf{W}$ be a vector space over $\mathbb{C}$ with $\dim (\mathsf{W})=d+1$. An element $\mathsf{A}\in \operatorname{End}(\mathsf{W})$ is called \emph{multiplicity-free} if it has $d+1$ mutually distinct eigenvalues. Suppose that~$\mathsf{A}$ is multiplicity-free, and let $\{\mytheta_i\}_{i=0}^d$ be an ordering of the eigenvalues of~$\mathsf{A}$. Then there is a sequence of elements $\{\mathsf{E}_i\}_{i=0}^d$ in $\operatorname{End}(\mathsf{W})$ such that $\mathsf{A}\mathsf{E}_i=\mytheta_i\mathsf{E}_i$, $\mathsf{E}_i\mathsf{E}_j = \delta_{i,j}\mathsf{E}_i$, $\sum\limits^d_{i=0}\mathsf{E}_i=\mathsf{I}$, where~$\mathsf{I}$ is the identity of $\operatorname{End}(\mathsf{W})$. We call~$\mathsf{E}_i$ the \emph{primitive idempotent} of~$\mathsf{A}$ associated with $\mytheta_i$.

\begin{Definition}
A \emph{Leonard system} on $\mathsf{W}$ is a sequence
\begin{gather*}
	\mathsf{\Phi}=\big(\mathsf{A};\mathsf{A}^*;\{\mathsf{E}_i\}^d_{i=0};\{\mathsf{E}^*_i\}^d_{i=0}\big)
\end{gather*}
that satisf\/ies the following axioms (LS\ref{LS1})--(LS\ref{LS4}):
\begin{enumerate}[(LS1)]\itemsep=0pt
\item Each of $\mathsf{A}, \mathsf{A}^*$ is a multiplicity-free element in $\mathrm{End}(\mathsf{W})$. \label{LS1}
\item $\{\mathsf{E}_i\}^d_{i=0}$ (resp.~$\{\mathsf{E}^*_i\}^d_{i=0}$) is an ordering of the primitive idempotents of $\mathsf{A}$ (resp.~$\mathsf{A}^*$).
\item $\mathsf{E}^*_i\mathsf{A}\mathsf{E}^*_j=\mathsf{E}_i\mathsf{A}^*\mathsf{E}_j=0$ if $|i-j|>1$, $0 \leqslant i,j \leqslant d$. \label{LS3}
\item There is no proper subspace of $\mathsf{W}$ which is both $\mathsf{A}$- and $\mathsf{A}^*$-invariant. \label{LS4}
\end{enumerate}
We call $d$ the \emph{diameter} of $\mathsf{\Phi}$.
\end{Definition}

We note that the above def\/inition is taken from \cite[Def\/inition 2.1]{ITT2001P} (and the paragraph following it), and is easily shown to be equivalent to the original def\/inition in \cite[Def\/ini\-tion~1.4]{PT.2001}.

Let $\mathsf{\Phi} = (\mathsf{A};\mathsf{A}^*;\{\mathsf{E}_i\}^d_{i=0};\{\mathsf{E}^*_i\}^d_{i=0})$ be a Leonard system on $\mathsf{W}$.
Note that each of the following is also a Leonard system on $\mathsf{W}$:
\begin{gather}\label{relatives}
	\mathsf{\Phi}^* := \big(\mathsf{A}^*;\mathsf{A};\{\mathsf{E}^*_i\}^d_{i=0};\{\mathsf{E}_i\}^d_{i=0}\big), \qquad \mathsf{\Phi}^{\Downarrow} := \big(\mathsf{A};\mathsf{A}^*;\{\mathsf{E}_{d-i}\}^d_{i=0};\{\mathsf{E}^*_i\}^d_{i=0}\big).
\end{gather}
A Leonard system $\mathsf{\Psi}$ on a vector space $\mathsf{W}'$ is \emph{isomorphic} to $\mathsf{\Phi}$ if there is a $\mathbb{C}$-algebra isomorphism $\sigma\colon \mathrm{End}(\mathsf{W}) \to \mathrm{End}(\mathsf{W}')$ such that
$\mathsf{\Psi}=\mathsf{\Phi}^{\sigma} :=
(\mathsf{A}^{\sigma};\mathsf{A}^{*\sigma};\{\mathsf{E}^{\sigma}_i\}^d_{i=0};\{\mathsf{E}^{*\sigma}_i\}^d_{i=0})$.

For $0 \leqslant i \leqslant d$, let $\mytheta_i$ (resp.~$\mytheta^*_i$) be the eigenvalue of $\mathsf{A}$ (resp.~$\mathsf{A}^*$) associated with $\mathsf{E}_i$ (resp.~$\mathsf{E}^*_i$).
Then there are non-zero scalars $\{\myvarphi_i\}^d_{i=1}$ in $\mathbb{C}$ and a $\mathbb{C}$-algebra isomorphism $\natural\colon \mathrm{End}(\mathsf{W})\to \operatorname{Mat}_{d+1}(\mathbb{C})$ (the full matrix algebra) such that \cite[Theorem 3.2]{PT.2001}
\begin{gather*}
\mathsf{A}^{\natural} =
	\begin{pmatrix}
	\mytheta_0 & & & & & \mathbf{0}\\
	1 & \mytheta_1 & \\
	& 1 & \mytheta_2 & \\
	& & \cdot & \cdot & \\
	& & & \cdot & \cdot \\
	\mathbf{0} & & & & 1 & \mytheta_d
	\end{pmatrix},
\qquad
\mathsf{A}^{*\natural} =
	\begin{pmatrix}
	\mytheta^*_0 & \myvarphi_1 & & & & \mathbf{0}\\
	& \mytheta^*_1 & \myvarphi_2\\
	& & \mytheta^*_2 & \cdot\\
	& & & \cdot & \cdot\\
	& & & & \cdot & \myvarphi_d \\
	\mathbf{0} & & & & & \mytheta^*_d
	\end{pmatrix}.
\end{gather*}
We call $\{\myvarphi_i\}_{i=1}^d$ the \emph{first split sequence} of $\mathsf{\Phi}$. Let $\{\myphi_i\}_{i=1}^d$ be the f\/irst split sequence of $\mathsf{\Phi}^{\Downarrow}$ and call this the \emph{second split sequence} of $\mathsf{\Phi}$. The \emph{parameter array} of $\mathsf{\Phi}$ is the sequence
\begin{gather*}
	\big(\{\mytheta_i\}^d_{i=0};\{\mytheta^*_i\}^d_{i=0};\{\myvarphi_i\}^d_{i=1};\{\myphi_i\}^d_{i=1}\big).
\end{gather*}
It is clear that the parameter array is a complete invariant for the isomorphism classes of Leonard systems.
Terwilliger \cite[Section~5]{PT.2005} displayed all the parameter arrays of Leonard systems in parametric form.
We now recall the dual $q$-Krawtchouk family of Leonard systems on which we will focus.

\begin{Definition}[{\cite[Example~5.9]{PT.2005}}]\label{dual q PA}
The Leonard system $\mathsf{\Phi}$ is said to have \emph{dual $q$-Krawtchouk type} if there are scalars $\myalpha$, $\myalpha^*$, $\mybeta$, $\mybeta^*$, $\mygamma$ with $\mybeta\mybeta^*\mygamma\ne 0$ such that
\begin{gather*}
	\mytheta_i = \myalpha + \mybeta q^{-i} + \mygamma q^i, \qquad
	\mytheta^*_i = \myalpha^* + \mybeta^* q^{-i}
\end{gather*}
for $0 \leqslant i \leqslant d$, and
\begin{gather*}
	\myvarphi_i = \mybeta\mybeta^* q^{1-2i}\big(1-q^i\big)\big(1-q^{i-d-1}\big),\qquad
	\myphi_i = \mygamma\mybeta^* q^{d+1-2i}\big(1-q^i\big)\big(1-q^{i-d-1}\big)
\end{gather*}
for $1 \leqslant i \leqslant d$. We call $(\myalpha,\myalpha^*,\mybeta,\mybeta^*,\mygamma\,;q,d)$ the \emph{parameter sequence} of $\mathsf{\Phi}$.
\end{Definition}

Take a non-zero vector $\mathsf{u} \in \mathsf{E}_0\mathsf{W}$. By \cite[Lemma 10.2]{Terwilliger2004LAA}, the vectors $\{\mathsf{E}^*_i\mathsf{u}\}^d_{i=0}$ form a basis for $\mathsf{W}$, called a $\mathsf{\Phi}$-\emph{standard} \emph{basis}. In view of (LS\ref{LS3}), (LS\ref{LS4}), there are scalars $\mathsf{a}_i$, $\mathsf{b}_i$, $\mathsf{c}_i$, $0\leqslant i\leqslant d$ such that $\mathsf{b}_d=\mathsf{c}_0=0$, $\mathsf{b}_{i-1}\mathsf{c}_i\ne 0$, $1\leqslant i\leqslant d$, and
\begin{gather}\label{recurrence 1}
	\mathsf{A}.\mathsf{E}^*_i\mathsf{u} =
	\mathsf{b}_{i-1}\mathsf{E}^*_{i-1}\mathsf{u} + \mathsf{a}_i\mathsf{E}^*_i\mathsf{u} + \mathsf{c}_{i+1}\mathsf{E}^*_{i+1}\mathsf{u},
	\qquad 0\leqslant i \leqslant d,
\end{gather}
where $\mathsf{b}_{-1}\mathsf{E}^*_{-1}\mathsf{u}=\mathsf{c}_{d+1}\mathsf{E}^*_{d+1}\mathsf{u}:=0$.
We call $\mathsf{a}_i$, $\mathsf{b}_i$, $\mathsf{c}_i$ the \emph{intersection numbers} of $\mathsf{\Phi}$.
Observe that
\begin{gather}\label{ai+bi+ci}
	\mathsf{a}_i + \mathsf{b}_i + \mathsf{c}_i = \mytheta_0, \qquad 0 \leqslant i \leqslant d.
\end{gather}
By \cite[Theorem 17.7]{Terwilliger2004LAA}, we have
\begin{gather}
	\mathsf{b}_i = \myvarphi_{i+1}
		\frac{(\mytheta^*_i-\mytheta^*_0)(\mytheta^*_i-\mytheta^*_1)\cdots (\mytheta^*_i-\mytheta^*_{i-1})}
		{(\mytheta^*_{i+1}-\mytheta^*_0)(\mytheta^*_{i+1}-\mytheta^*_1)\cdots (\mytheta^*_{i+1}-\mytheta^*_i)}
		, \qquad 0 \leqslant i \leqslant d-1, \label{bi formula}\\
	\mathsf{c}_i = \myphi_i
		\frac{(\mytheta^*_i-\mytheta^*_{i+1}) (\mytheta^*_i-\mytheta^*_{i+2}) \cdots (\mytheta^*_i-\mytheta^*_{d})}
		{(\mytheta^*_{i-1}-\mytheta^*_i) (\mytheta^*_{i-1}-\mytheta^*_{i+1}) \cdots (\mytheta^*_{i-1}-\mytheta^*_d)}, \qquad
		 1 \leqslant i \leqslant d. \label{ci formula}
\end{gather}

Def\/ine a f\/inite sequence of polynomials $\{\mathsf{v}_i\}_{i=0}^d$ in $\mathbb{C}[\xi]$ with $\deg(\mathsf{v}_i)=i$ by $\mathsf{v}_0 :=1$ and
\begin{gather}\label{recurrence 2}
	\xi \mathsf{v}_i = \mathsf{b}_{i-1}\mathsf{v}_{i-1} + \mathsf{a}_i\mathsf{v}_i + \mathsf{c}_{i+1}\mathsf{v}_{i+1}, \qquad 0 \leqslant i \leqslant d-1,
\end{gather}
where $\mathsf{b}_{-1}\mathsf{v}_{-1}:=0$.
From \eqref{recurrence 1} and \eqref{recurrence 2} it follows that (cf.~\cite[Theorem 13.4]{Terwilliger2004LAA})
\begin{gather*}
	\mathsf{v}_i(\mathsf{A}).\mathsf{E}^*_0 \mathsf{u} = \mathsf{E}^*_i \mathsf{u}, \qquad 0\leqslant i\leqslant d.
\end{gather*}
Consider the following normalization:
\begin{gather*}
	\mathsf{f}_i := \frac{\mathsf{v}_i}{\mathsf{v}_i(\mytheta_0)}, \qquad 0\leqslant i\leqslant d,
\end{gather*}
where using \eqref{ai+bi+ci}, \eqref{recurrence 2}, we have by induction (cf.~\cite[Lemma 13.2]{Terwilliger2004LAA})
\begin{gather}\label{ki}
	\mathsf{v}_i(\mytheta_0) = \frac{\mathsf{b}_0\mathsf{b}_1\cdots\mathsf{b}_{i-1}}{\mathsf{c}_1\mathsf{c}_2\cdots\mathsf{c}_i}(\ne 0),
	\qquad 0 \leqslant i \leqslant d.
\end{gather}
By \cite[Theorem 17.4]{Terwilliger2004LAA}, for $0\leqslant i\leqslant d$ we have
\begin{gather*}
	\mathsf{f}_i = \sum^i_{n=0} \frac{(\mytheta^*_i-\mytheta^*_0)(\mytheta^*_i-\mytheta^*_1)\cdots(\mytheta^*_i-\mytheta^*_{n-1})(\xi-\mytheta_0)(\xi-\mytheta_1)\cdots (\xi-\mytheta_{n-1})}{\myvarphi_1\myvarphi_2 \cdots \myvarphi_n}.
\end{gather*}

Def\/ine the scalars $\mathsf{m}_i$, $0\leqslant i\leqslant d$, by
\begin{gather}\label{mi}
	\mathsf{m}_i=\operatorname{trace}(\mathsf{E}_i\mathsf{E}_0^*), \qquad 0\leqslant i\leqslant d,
\end{gather}
so that $\mathsf{E}_0^*\mathsf{E}_i\mathsf{E}_0^*=\mathsf{m}_i\mathsf{E}_0^*$, $0\leqslant i\leqslant d$; cf.~\cite[Lemma 9.2]{Terwilliger2004LAA}.
By \cite[Theorem 17.12]{Terwilliger2004LAA}, we have
\begin{gather*}
	\mathsf{m}_i=\frac{\myvarphi_1\myvarphi_2 \cdots \myvarphi_i\myphi_1\myphi_2 \cdots \myphi_{d-i}}{(\mytheta^*_0-\mytheta^*_1)\cdots (\mytheta^*_0-\mytheta^*_d) (\mytheta_i-\mytheta_0)\cdots(\mytheta_i-\mytheta_{i-1})(\mytheta_i-\mytheta_{i+1})\cdots(\mytheta_i-\mytheta_d)}.
\end{gather*}

Assume now that $\mathsf{\Phi}$ has dual $q$-Krawtchouk type as in Def\/inition \ref{dual q PA}.
Then we have\footnote{See also \cite[Theorem 2.1]{PT.1992(1)}. In the notation of \cite[Theorem 2.1]{PT.1992(1)}, the Leonard systems of dual $q$-Krawtchouk type correspond to Case (I) with $s^*=r_1=r_2=0$.}
\begin{gather}\label{bi, ci : dual-qK}
	\mathsf{b}_i = \mybeta\big(1-q^{i-d}\big), \qquad \mathsf{c}_i = \mygamma\big(1-q^i\big), \qquad 0\leqslant i\leqslant d.
\end{gather}
The values of the $\mathsf{f}_i$ at $\xi=\mytheta_j$ are given by (cf.~\cite[Example 5.9]{PT.2005})
\begin{gather}\label{dual q-Krawtchouk}
	\mathsf{f}_i(\mytheta_j) = {}_{3}\phi_{2}\!
	\left(
	\left.
	\begin{matrix}
	q^{-i}, \, q^{-j}, \, \mybeta^{-1}\mygamma q^j \\
	0, \, q^{-d}
	\end{matrix}\,
	\right| q,q
	\right), \qquad 0\leqslant i,j\leqslant d.
\end{gather}
It follows that the $\mathsf{f}_i$ are the \emph{dual $q$-Krawtchouk polynomials} \cite[Section~14.17]{KLS2010B} in the variable
\begin{gather*}
	\xi=\xi(x):=\myalpha + \mybeta q^{-x} + \mygamma q^x.
\end{gather*}
We also have
\begin{gather}\label{mi formula}
	\mathsf{m}_i = \frac{\mybeta^i}{\mygamma^i} \frac{(q^{-d};q)_i (1-\mybeta\mygamma^{-1} q^{-2i})}{q^{i(i-1)} (q;q)_i (\mybeta\mygamma^{-1} q^{-d-i};q)_{d+1}}, \qquad 0\leqslant i\leqslant d.
\end{gather}

Following \cite{KB2011AA}, we f\/ix a non-zero scalar $\mytau$ such that
\begin{gather}\label{tau}
	\mytau^2=\mybeta^{-1}\mygamma,
\end{gather}
and renormalize the $\mathsf{f}_i$ so that they are monic\footnote{A Laurent polynomial in $\eta$ is called \emph{monic} if the coef\/f\/icient of its highest degree term in $\eta$ equals $1$.} as symmetric Laurent polynomials in $\eta$:
\begin{gather}\label{monic symmetric Laurent dual q-Krawtchouk}
	\mathsf{h}_i(\eta):=\frac{(q^{-d};q)_i}{\mytau^i} {}_{3}\phi_{2}\!\left(\left.
	\begin{matrix}
	q^{-i},\, \mytau\eta^{-1},\, \mytau\eta \\
	0, \, q^{-d}
	\end{matrix}\,
	\right| q,q \right),
	\qquad 0\leqslant i\leqslant d.
\end{gather}
The $\mathsf{h}_i$ depend on the parameters $q$, $d$, and $\mytau$, and we will write
\begin{gather}\label{hi in 3 parameters}
	\mathsf{h}_i(\eta)=h_i(\eta;\mytau,d;q), \qquad 0\leqslant i\leqslant d.
\end{gather}
We note that $\mathsf{h}_i$ has highest degree $i$ and lowest degree $-i$ in $\eta$, and that
\begin{gather}\label{hi and fi}
	\mathsf{h}_i(\eta)=\frac{(q^{-d};q)_i}{\mytau^i}\mathsf{f}_i\bigl(\xi\big(\log_q\mytau^{-1}\eta\big)\bigr), \qquad 0\leqslant i\leqslant d.
\end{gather}

Let $\Wsf$ be a vector space over $\mathbb{C}$ containing $\mathsf{W}$ as a subspace, and let $\mathsf{X}\in\operatorname{End}(\Wsf)$ be invertible such that $(\mathsf{X}+\mathsf{X}^{-1})\mathsf{W}\subset\mathsf{W}$ and that
\begin{gather*}
	\mathsf{A}=\myalpha\mathsf{I}+\mybeta\mytau \big(\mathsf{X}+\mathsf{X}^{-1}\big)=\myalpha\mathsf{I}+\mybeta\big(\mytau^{-1}\mathsf{X}\big)^{-1}+\mygamma\big(\mytau^{-1}\mathsf{X}\big)
\end{gather*}
holds on $\mathsf{W}$.
Then it follows from \eqref{hi and fi} that
\begin{gather*}
	\mathsf{h}_i(\mathsf{X})|_{\mathsf{W}}=\frac{(q^{-d};q)_i}{\mytau^i}\mathsf{f}_i(\mathsf{A})=\mytau^i(q;q)_i\mathsf{v}_i(\mathsf{A}),
\end{gather*}
where we have also used \eqref{ki} and \eqref{bi, ci : dual-qK}.
In particular, we have
\begin{gather}\label{hi on std basis}
	\mathsf{h}_i(\mathsf{X}).\mathsf{E}^*_0 \mathsf{u} = \mytau^i(q;q)_i\mathsf{E}^*_i \mathsf{u}, \qquad 0\leqslant i\leqslant d.
\end{gather}

\section[Irreducible $T$-submodules of $\mathbf{W}$]{Irreducible $\boldsymbol{T}$-submodules of $\boldsymbol{\mathbf{W}}$}\label{S:T-mod W}

Recall the primary $\mathbf{T}$-module $\mathbf{W}$. In this section, we focus on the decomposition \eqref{ods;T} as $T$-modules. We f\/irst discuss the primary $T$-module $M\hat{x}$. Consider the sequence
	\begin{gather*}
	\Phi:=\big(A;A^*;\{E_i\}^D_{i=0};\{E^*_i\}^D_{i=0}\big)\big|_{M\hat{x}},
	\end{gather*}
where $|_{M\hat{x}}$ means that each of the elements in the sequence is restricted to $M\hat{x}$.

\begin{Proposition}\label{Phi}
The sequence $\Phi$ is a Leonard system on $M\hat{x}$ that has dual $q$-Krawtchouk type with parameter sequence $(\alpha,\alpha^*,\beta,\beta^*,\gamma\,;q,D)$, where\footnote{See also \cite[Example 6.1(6)]{PT.1993(2)}.}
\begin{gather*}
	\alpha = \frac{q^e-1}{1-q}, \qquad \beta = \frac{q^{D+e}}{q-1}, \qquad 	\gamma = \frac{1}{1-q}, \\
	\alpha^* = \frac{q(1+q^{D+e-2})}{1-q}, \qquad \beta^* = \frac{q(1+q^{D+e-2})(1+q^{D+e-1})}{(q-1)(1+q^{e-1})}.
\end{gather*}
Moreover, $\{A_i\hat{x}\}_{i=0}^D$ is a $\Phi$-standard basis for $M\hat{x}$.
\end{Proposition}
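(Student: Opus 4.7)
The plan is to verify the four Leonard-system axioms (LS\ref{LS1})--(LS\ref{LS4}) for the restriction $\Phi$, then identify the dual $q$-Krawtchouk parameter sequence by matching the eigenvalue and dual eigenvalue formulas against Definition~\ref{dual q PA}, and finally identify the claimed standard basis via the distinguished vector $\hat{X}$.

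For (LS\ref{LS1}) and (LS\ref{LS2}), the key input is \eqref{primary T(x)-module}, which says $M\hat{x}$ is $(D+1)$-dimensional with basis $\{E_i\hat{x}\}_{i=0}^D$ and also with basis $\{A_i\hat{x}\}_{i=0}^D$. The former shows $A|_{M\hat{x}}$ has $D+1$ distinct eigenvalues $\theta_0,\ldots,\theta_D$, so it is multiplicity-free with primitive idempotents $E_i|_{M\hat{x}}$. The latter, combined with the second relation in \eqref{Phi-std} and the fact that the $\theta_i^*$ are mutually distinct (cf.~\eqref{dual-e-val}), shows that $A^*|_{M\hat{x}}$ is multiplicity-free with primitive idempotents $E_i^*|_{M\hat{x}}$. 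Axiom (LS\ref{LS3}) is inherited directly from \eqref{3products;x}. For (LS\ref{LS4}), any $A|_{M\hat{x}}$- and $A^*|_{M\hat{x}}$-invariant subspace of $M\hat{x}$ is invariant under the subalgebra generated by $A$ and $A^*$, which is $T$, so irreducibility of the $T$-module $M\hat{x}$ (stated after \eqref{primary T(x)-module}) forces it to be $0$ or $M\hat{x}$.

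Next, I would identify the parameters by algebraic rearrangement. Expanding \eqref{e-val} as
\begin{gather*}
	\theta_i = \frac{q^e-1}{1-q} + \frac{q^{D+e}}{q-1}\, q^{-i} + \frac{1}{1-q}\, q^i
\end{gather*}
yields the stated $\alpha$, $\beta$, $\gamma$, and reading off \eqref{dual-e-val} gives $\theta_i^* = \alpha^* + \beta^* q^{-i}$ with the claimed $\alpha^*$, $\beta^*$. Each of $\beta$, $\beta^*$, $\gamma$ is visibly non-zero, so $\Phi$ satisfies Definition~\ref{dual q PA}, and hence has dual $q$-Krawtchouk type with parameter sequence $(\alpha,\alpha^*,\beta,\beta^*,\gamma;q,D)$.

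For the standard basis, since $E_0V=\mathbb{C}\hat{X}$ and $\hat{X}=\sum_{i=0}^D A_i\hat{x}\in M\hat{x}$, we have $E_0(M\hat{x})=\mathbb{C}\hat{X}$; thus $u:=\hat{X}$ is a non-zero vector in the zero-eigenspace of $\Phi$. By the identity $E_i^*\hat{X}=A_i\hat{x}$ noted after \eqref{3products;x}, we obtain $E_i^* u = A_i\hat{x}$, so $\{A_i\hat{x}\}_{i=0}^D$ is indeed a $\Phi$-standard basis. As a consistency check on the identification, the first relation in \eqref{Phi-std} shows that the intersection numbers of $\Phi$ in the sense of \eqref{recurrence 1} coincide with those of $\Gamma$, and one verifies using \eqref{int.num;x} that $b_i = \beta(1-q^{i-D})$ and $c_i = \gamma(1-q^i)$, matching \eqref{bi, ci : dual-qK}. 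There is no genuine obstacle in this argument; the only care required is the algebraic rearrangement of \eqref{e-val} into the form $\alpha+\beta q^{-i}+\gamma q^i$, and the observation that the natural basis $\{A_i\hat{x}\}$ provided by \eqref{primary T(x)-module} is, up to the choice $u=\hat{X}$, already the split-basis construction used in Leonard-system theory.
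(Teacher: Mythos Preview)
Your argument is correct and follows essentially the same route as the paper: you invoke \eqref{3products;x} and \eqref{primary T(x)-module} for the Leonard-system axioms, identify the standard basis via $A_i\hat{x}=E_i^*\hat{X}$ with $\hat{X}\in E_0M\hat{x}$, and read off the parameters from \eqref{e-val}, \eqref{dual-e-val}, and \eqref{int.num;x}. One point of ordering: your ``consistency check'' that $b_i=\beta(1-q^{i-D})$ and $c_i=\gamma(1-q^i)$ is not optional but is the substance of verifying the split-sequence part of Definition~\ref{dual q PA} (via \eqref{bi formula}, \eqref{ci formula}); matching the eigenvalue shapes alone does not yet establish the $\myvarphi_i$, $\myphi_i$ formulas, so that step should precede rather than follow the conclusion that $\Phi$ has dual $q$-Krawtchouk type.
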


\begin{proof}
It follows from \eqref{3products;x} and \eqref{primary T(x)-module} that $\Phi$ is a Leonard system on $M\hat{x}$.
Moreover, $\{A_i\hat{x}\}_{i=0}^D$ is a $\Phi$-standard basis for $M\hat{x}$, since $A_i\hat{x}=E_i^*\hat{X}$, $0\leqslant i\leqslant D$, and $\hat{X}\in E_0M\hat{x}$.
By virtue of \eqref{Phi-std} and \eqref{recurrence 1}, we routinely obtain the parameter array of $\Phi$ from \eqref{int.num;x}, \eqref{e-val}, \eqref{dual-e-val} using \eqref{bi formula}, \eqref{ci formula} (or \eqref{bi, ci : dual-qK}), and f\/ind that $\Phi$ has dual $q$-Krawtchouk type as given above.
\end{proof}

We note that the intersection numbers of $\Phi$ coincide with those of $\Gamma$.

We next consider the $T$-module $M\hat{x}^{\perp}$, the orthogonal complement of $M\hat{x}$ in $\mathbf{W}$.
Observe that $E_0M\hat{x}^{\perp}=0$ since $E_0\mathbf{W}=\mathbb{C}\hat{X}\subset M\hat{x}$.
From \eqref{E_1} and \eqref{clique number} it follows that
\begin{gather}\label{inner products}
	||E_1\hat{x}||^2=\frac{\theta_0^*}{|X|}, \qquad \langle E_1\hat{x},E_1\hat{C}\rangle=\frac{1}{|X|}(\theta_0^*+q^e\theta_1^*), \qquad ||E_1\hat{C}||^2=\frac{1+q^e}{|X|}(\theta_0^*+q^e\theta_1^*),\!\!\!
\end{gather}
from which it follows that
\begin{gather*}
	w:=c\cdot\left(E_1\hat{C}-\frac{\theta_0^*+q^e\theta_1^*}{\theta_0^*}E_1\hat{x}\right) \in E_1M\hat{x}^{\perp},
\end{gather*}
where we take
\begin{gather*}
	c:=\frac{|X|(\alpha^*+\beta^*)}{\alpha^*\beta^*q^{e-1}(1-q)}
\end{gather*}
for convenience.
By \eqref{E_1}, \eqref{dual-e-val}, \eqref{E_1C}, and \eqref{dual-e-val;tilde}, we routinely obtain
\begin{gather*}
	w=\sum_{i=0}^{D-1}\big(\big(q^{-i}-1\big)\hat{C}^-_i+\big(q^{D-i-1}-1\big)\hat{C}^+_i\big).
\end{gather*}
Note that $E_0^*w=E_D^*w=0$.
Let
\begin{gather}\label{ui.perp}
	u_i^{\perp}:=E_{i+1}^*w=\big(q^{D-i-1}-1\big)\hat{C}^+_i + \big(q^{-i-1}-1\big)\hat{C}^-_{i+1}, \qquad 0\leqslant i\leqslant D-2.
\end{gather}
Since $\dim(M\hat{x}^{\perp})=D-1$, the vectors $\{u^{\perp}_i\}^{D-2}_{i=0}$ form an orthogonal basis for $M\hat{x}^{\perp}$.

Consider the sequence
\begin{gather*}
	\Phi^{\perp}:=\big(A;A^*;\{E_i\}^{D-1}_{i=1};\{E^*_i\}^{D-1}_{i=1}\big)\big|_{M\hat{x}^{\perp}}.
\end{gather*}

\begin{Proposition}\label{para.seq}
The $T$-module $M\hat{x}^{\perp}$ is irreducible, and $\Phi^{\perp}$ is a Leonard system on $M\hat{x}^{\perp}$ that has dual $q$-Krawtchouk type with parameter sequence $(\alpha^{\perp}, \alpha^{*\perp}, \beta^{\perp}, \beta^{*\perp}, \gamma^{\perp}\,;q,D-2)$, where\footnote{See also \cite[Theorem 6.9]{Tanaka2009LAAb}, \cite[Theorem 4.6]{PT.1993(1)}.}
\begin{gather*}
	\big(\alpha^{\perp}, \alpha^{*\perp}, \beta^{\perp}, \beta^{*\perp}, \gamma^{\perp}\big)
	= \big(\alpha, \alpha^*, \beta q^{-1}, \beta^* q^{-1}, \gamma q\big).
\end{gather*}
Moreover, $\{u^{\perp}_i\}^{D-2}_{i=0}$ is a $\Phi^{\perp}$-standard basis for $M\hat{x}^{\perp}$.
\end{Proposition}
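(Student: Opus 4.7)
The strategy is threefold: (a)~identify $M\hat{x}^\perp$ as a $(D-1)$-dimensional $T$-submodule of $\mathbf{W}$ with explicit basis $\{u_i^\perp\}_{i=0}^{D-2}$; (b)~compute the actions of $A^*$ and $A$ on this basis; (c)~compare the resulting data with Definition~\ref{dual q PA} to read off the parameter sequence.

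For (a), the algebra $T$ is closed under conjugate-transposition since it is generated by the Hermitian matrices $A$, $A^*$, so the orthogonal complement inside $\mathbf{W}$ of the $T$-submodule $M\hat{x}$ is again a $T$-submodule. The containment $M\hat{x}\subset\mathbf{W}$ follows from $A_0\hat{x}=\hat{C}^-_0$, $A_i\hat{x}=\hat{C}^+_{i-1}+\hat{C}^-_i$ for $1\leqslant i\leqslant D-1$, and $A_D\hat{x}=\hat{C}^+_{D-1}$ (using \eqref{empty in some cases}). Hence by Lemma~\ref{|C|;q-exp} and Proposition~\ref{Phi} one has $\dim M\hat{x}^\perp=2D-(D+1)=D-1$. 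Since $w\in E_1\mathbf{W}\cap M\hat{x}^\perp$ and $M\hat{x}^\perp$ is $E_{i+1}^*$-invariant, each $u_i^\perp=E_{i+1}^*w$ lies in $M\hat{x}^\perp$; the formula~\eqref{ui.perp} shows $u_i^\perp\neq 0$, and the $u_i^\perp$ lie in the pairwise orthogonal summands $E_{i+1}^*\mathbf{W}$, so they form an orthogonal basis.

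For (b), Lemma~\ref{act.A*,A*t} gives $A^*.u_i^\perp=\theta_{i+1}^*u_i^\perp$, so $A^*|_{M\hat{x}^\perp}$ is multiplicity-free with eigenvalues $\theta_1^*,\dots,\theta_{D-1}^*$; note also that $E_0^*\mathbf{W}=\mathbb{C}\hat{x}$ and $E_D^*\mathbf{W}=\mathbb{C}A_D\hat{x}$ both lie in $M\hat{x}$, so $E_0^*$ and $E_D^*$ restrict to zero on $M\hat{x}^\perp$. The vanishing conditions~\eqref{3products;x} then force the $A$-action to take tridiagonal form
\begin{gather*}
A.u_i^\perp = \mathsf{b}_{i-1}u_{i-1}^\perp+\mathsf{a}_iu_i^\perp+\mathsf{c}_{i+1}u_{i+1}^\perp.
\end{gather*}
Expanding the left side via Lemma~\ref{actA} and regrouping the resulting six vectors $\hat{C}^\pm_j$ into the three basis vectors $u_{i-1}^\perp,u_i^\perp,u_{i+1}^\perp$ yields closed forms for $\mathsf{a}_i,\mathsf{b}_i,\mathsf{c}_i$, with $\mathsf{b}_{i-1}$ and $\mathsf{c}_{i+1}$ visibly non-zero on their relevant ranges.

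For (c), axioms (LS\ref{LS1})--(LS\ref{LS3}) are now immediate, while (LS\ref{LS4}) follows from the non-vanishing of the off-diagonals in (b): any common $A$- and $A^*$-invariant subspace contains some $u_i^\perp$ (as $A^*$-invariant subspaces are sums of the eigenlines $\mathbb{C}u_i^\perp$), and the tridiagonal $A$-action with non-zero off-diagonals then propagates to reach every other $u_j^\perp$. Hence $\Phi^\perp$ is a Leonard system, and $u_i^\perp=E_{i+1}^*w$ with $w\in E_1M\hat{x}^\perp$ shows $\{u_i^\perp\}$ is $\Phi^\perp$-standard. Matching $\theta_{i+1}=\alpha+(\beta q^{-1})q^{-i}+(\gamma q)q^i$ and $\theta_{i+1}^*=\alpha^*+(\beta^*q^{-1})q^{-i}$ against Definition~\ref{dual q PA} identifies the parameters as $(\alpha,\alpha^*,\beta q^{-1},\beta^* q^{-1},\gamma q;\,q,D-2)$; the split sequences $\varphi_i^\perp,\phi_i^\perp$ follow either by direct substitution into Definition~\ref{dual q PA} or by reading them off the $\mathsf{b}_i,\mathsf{c}_i$ via \eqref{bi formula}--\eqref{ci formula}.

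The principal obstacle is the arithmetic in step~(b): expanding $A.u_i^\perp$ through Lemma~\ref{actA} and recognizing the resulting six-term combination as a tridiagonal expansion in the $u_j^\perp$ requires careful $q$-bookkeeping, though no conceptual difficulty.
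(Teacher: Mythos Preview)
Your approach is essentially the paper's: compute the tridiagonal action of $A$ on $\{u_i^\perp\}$ via Lemma~\ref{actA}, use the non-zero off-diagonals to obtain irreducibility, and then read off the parameter sequence. The paper records the explicit recursion
\begin{gather*}
A.u^{\perp}_i = \frac{q^{D+e-1}-q^{i+e}}{q-1}\,u^{\perp}_{i-1} +(q^e-1)\frac{q^{i+1}-1}{q-1}\,u^{\perp}_i +\frac{q^{i+2}-q}{q-1}\,u^{\perp}_{i+1},
\end{gather*}
which is exactly the output of your step~(b).

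There is, however, one point you gloss over. You assert that axioms (LS\ref{LS1})--(LS\ref{LS3}) are ``now immediate'', but (LS\ref{LS1}) and (LS\ref{LS2}) for $A$ require knowing that the restrictions $E_i|_{M\hat{x}^\perp}$, $1\leqslant i\leqslant D-1$, are \emph{precisely} the primitive idempotents of $A|_{M\hat{x}^\perp}$: each of these must be non-zero while $E_0|_{M\hat{x}^\perp}=E_D|_{M\hat{x}^\perp}=0$. You have $E_0$ vanishing (since $E_0\mathbf{W}=\mathbb{C}\hat{X}\subset M\hat{x}$) and $E_1$ not vanishing (since $0\ne w\in E_1M\hat{x}^\perp$), but nothing in your argument rules out, say, $E_DM\hat{x}^\perp\neq 0$ together with $E_jM\hat{x}^\perp=0$ for some intermediate~$j$. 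The paper supplies this missing step: the tridiagonal recursion with non-zero off-diagonals yields $M\hat{x}^\perp=Mu_0^\perp$, so $E_iM\hat{x}^\perp=\mathbb{C}E_iu_0^\perp$ has dimension at most one; then the relations $E_iA^*E_j=0$ for $|i-j|>1$ from~\eqref{3products;x}, together with irreducibility, force the indices with $E_iu_0^\perp\neq 0$ to form a contiguous block starting at~$1$, hence exactly $\{1,\dots,D-1\}$ by dimension. Without this, your identity $\theta_{i+1}=\alpha+(\beta q^{-1})q^{-i}+(\gamma q)q^i$ is only an algebraic rewriting of $\theta_{i+1}$, not a proof that the $\theta_{i+1}$ are the eigenvalues of $A$ on $M\hat{x}^\perp$; consequently $\Phi^\perp$ is not yet known to be a Leonard system, and appealing to \eqref{bi formula}--\eqref{ci formula} to recover the split sequences is premature.
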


\begin{proof}
By Lemma \ref{actA}, we have
\begin{gather}\label{A.vip}
	A.u^{\perp}_i = \frac{q^{D+e-1}-q^{i+e}}{q-1}u^{\perp}_{i-1} +\big(q^e-1\big)\frac{q^{i+1}-1}{q-1}u^{\perp}_i +\frac{q^{i+2}-q}{q-1}u^{\perp}_{i+1}, \qquad\!\! 0\leqslant i\leqslant D-2,\!\!\!
\end{gather}
where $u^{\perp}_{-1}=u^{\perp}_{D-1}:=0$. Note that the coef\/f\/icients of $u^{\perp}_{i-1}$, $u^{\perp}_{i+1}$ are non-zero.
Every $T$-submodule of $M\hat{x}^{\perp}$ is spanned by some of the $u_i^{\perp}$ since it is $M^*$-invariant, and~\eqref{A.vip} then shows that it must be either $M\hat{x}^{\perp}$ or $0$. Hence $M\hat{x}^{\perp}$ is an irreducible $T$-module. It also follows from~\eqref{A.vip} that $M\hat{x}^{\perp}=Mu_0^{\perp}$. In particular, $M\hat{x}^{\perp}$ is spanned by the vectors $E_iu_0^{\perp}$, $0\leqslant i\leqslant D$.
Note that $E_0u_0^{\perp}=0$ and $E_1u_0^{\perp}\ne 0$ since $E_0M\hat{x}^{\perp}=0$ and $w\in E_1M\hat{x}^{\perp}=\mathbb{C}E_1u_0^{\perp}$.
Hence it follows from~\eqref{3products;x} and the irreducibility of $M\hat{x}^{\perp}$ that $E_iu_0^{\perp}\ne 0$, $2\leqslant i\leqslant D-1$ and $E_Du_0^{\perp}=0$.
From these comments and \eqref{3products;x}, it follows that $\Phi^{\perp}$ is a Leonard system on $M\hat{x}^{\perp}$, and that $\{u_i^{\perp}\}_{i=0}^{D-2}$ is a $\Phi^{\perp}$-standard basis for $M\hat{x}^{\perp}$.
Note that
\begin{gather*}
	A^*.u^{\perp}_i=\theta_{i+1}^* u^{\perp}_i, \qquad 0\leqslant i\leqslant D-2.
\end{gather*}
The parameter array of $\Phi^{\perp}$ can now be computed as in the proof of Proposition~\ref{Phi}.
\end{proof}

We may remark that \emph{every} irreducible $T$-module indeed af\/fords a Leonard system of dual $q$-Krawtchouk type; cf.~\cite[Theorem 23.1]{CW}.

Let $\pi$ be the orthogonal projection from $\mathbf{W}$ onto $M\hat{x}$, i.e.,
\begin{gather*}
	(1-\pi)M\hat{x} = 0, \qquad \pi M\hat{x}^{\perp} = 0.
\end{gather*}
Then we have $A\pi=\pi A$ and $A^* \pi = \pi A^*$ on $\mathbf{W}$.
Observe that
\begin{gather}\label{C;v-com-1}
	\hat{C}^+_{i-1} = \frac{q^i-1}{q^D-1}A_i\hat{x} + \frac{q^i}{q^D-1} u^{\perp}_{i-1}, \qquad \hat{C}^-_i = \frac{q^D-q^i}{q^D-1}A_i\hat{x} + \frac{q^i}{1-q^D} u^{\perp}_{i-1}
\end{gather}
for $1\leqslant i \leqslant D-1$,
from which it follows that
\begin{gather}\label{act.pi}
	\pi.\hat{C}^+_{i-1} = \frac{q^i-1}{q^D-1}\big(\hat{C}^+_{i-1}+\hat{C}^-_i\big),
	\qquad
	\pi.\hat{C}^-_i = \frac{q^D-q^i}{q^D-1}\big(\hat{C}^+_{i-1}+\hat{C}^-_i\big)
\end{gather}
for $1 \leqslant i \leqslant D-1$. Moreover, we have $\pi.\hat{C}^-_0 = \hat{C}^-_0$ and $\pi.\hat{C}^+_{D-1} = \hat{C}^+_{D-1}$.

\section[Irreducible $\widetilde{T}$-submodules of $\mathbf{W}$]{Irreducible $\boldsymbol{\widetilde{T}}$-submodules of $\boldsymbol{\mathbf{W}}$}\label{S:T(C)-mod}

In this section, we focus on the decomposition \eqref{ods;Ttil} as $\widetilde{T}$-modules. We f\/irst discuss the primary $\widetilde{T}$-module $M\hat{C}$. Consider the sequence
\begin{gather*}
	\widetilde{\Phi} := \big(A;\widetilde{A}^*;\{E_i\}^{D-1}_{i=0};\{\widetilde{E}^*_i\}^{D-1}_{i=0}\big)\big|_{M\hat{C}}.
\end{gather*}
\begin{Proposition}\label{LS MC}
The sequence $\widetilde{\Phi}$ is a Leonard system on $M\hat{C}$ that has dual $q$-Krawtchouk type with parameter sequence $(\widetilde{\alpha}, \widetilde{\alpha}^*, \widetilde{\beta}, \widetilde{\beta}^*, \widetilde{\gamma}\,;q,D-1)$, where\footnote{See \cite[Proposition 4.6]{Tanaka2011EJC} for a more general result.} $($cf.~Proposition {\rm \ref{Phi})}
\begin{gather*}
	\big(\widetilde{\alpha}, \widetilde{\alpha}^*, \widetilde{\beta}, \widetilde{\beta}^*, \widetilde{\gamma}\big) = \left(\alpha, \alpha^*,\beta, \frac{1+q^{e-1}}{1+q^e}\beta^*, \gamma\right).
\end{gather*}
Moreover, $\{\hat{C}_i\}^{D-1}_{i=0}$ is a $\widetilde{\Phi}$-standard basis for $M\hat{C}$.
\end{Proposition}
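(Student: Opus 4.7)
The plan is to follow exactly the argument used in Proposition \ref{Phi}, with the roles of $T$, $x$, $A^*$, $E_i^*$ replaced by $\widetilde{T}$, $C$, $\widetilde{A}^*$, $\widetilde{E}_i^*$. First, I verify that $\widetilde{\Phi}$ is a Leonard system on the $D$-dimensional space $M\hat{C}$. The tridiagonal vanishing axiom is immediate from \eqref{3products;C}. Since $\widetilde{A}^*$ acts diagonally on $\{\hat{C}_i\}_{i=0}^{D-1}$ with $D$ mutually distinct eigenvalues $\widetilde{\theta}_0^*,\dots,\widetilde{\theta}_{D-1}^*$ by \eqref{Phit-std} and \eqref{dual-e-val;tilde}, $\widetilde{A}^*|_{M\hat{C}}$ is multiplicity-free. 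The restriction $A|_{M\hat{C}}$ is represented by an irreducible tridiagonal matrix on $\{\hat{C}_i\}_{i=0}^{D-1}$ by \eqref{Phit-std} (since $\widetilde{b}_{i-1}\widetilde{c}_i\ne 0$), hence multiplicity-free, with eigenvalues among the $\theta_j$; this tridiagonal irreducibility together with the multiplicity-freeness of $\widetilde{A}^*$ also rules out any proper common invariant subspace.

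Next, I check that $\{\hat{C}_i\}_{i=0}^{D-1}$ is a $\widetilde{\Phi}$-standard basis. By definition, $\widetilde{E}_i^*\hat{X} = \hat{C}_i$ for $0 \leqslant i \leqslant D-1$, and $\hat{X} = \sum_{i=0}^{D-1}\hat{C}_i \in M\hat{C}$; since $E_0V = \mathbb{C}\hat{X}$, we get $0 \ne \hat{X} \in E_0 M\hat{C}$, so $\{\hat{C}_i\}_{i=0}^{D-1}$ is indeed a $\widetilde{\Phi}$-standard basis.

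Finally, comparing \eqref{Phit-std} with \eqref{recurrence 1}, the intersection numbers of $\widetilde{\Phi}$ are precisely the $\widetilde{a}_i,\widetilde{b}_i,\widetilde{c}_i$ from \eqref{int num C;q}. To match the dual $q$-Krawtchouk parametric form of Definition \ref{dual q PA} with candidate parameter sequence $(\alpha,\alpha^*,\beta,\frac{1+q^{e-1}}{1+q^e}\beta^*,\gamma\,;q,D-1)$ (where $\alpha,\beta,\gamma,\alpha^*,\beta^*$ are as in Proposition \ref{Phi}), I verify that $\alpha+\beta q^{-i}+\gamma q^i = \theta_i$ for $0 \leqslant i \leqslant D-1$ from \eqref{e-val}, that $\alpha^*+\frac{1+q^{e-1}}{1+q^e}\beta^* q^{-i} = \widetilde{\theta}_i^*$ from \eqref{dual-e-val;tilde}, and via \eqref{bi, ci : dual-qK} that $\beta(1-q^{i-D+1}) = \widetilde{b}_i$ and $\gamma(1-q^i) = \widetilde{c}_i$; the first and second split sequences are then forced by \eqref{bi formula} and \eqref{ci formula}. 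The main bookkeeping issue is tracking the shift of diameter from $D$ to $D-1$ and the accompanying rescaling $\beta^* \mapsto \widetilde{\beta}^*$; no genuinely new idea beyond the proof of Proposition \ref{Phi} is required.
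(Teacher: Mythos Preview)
Your argument has a gap in verifying axiom (LS2) for the sequence $\{E_i\}_{i=0}^{D-1}$ restricted to $M\hat{C}$. You correctly show that $A|_{M\hat{C}}$ is multiplicity-free with eigenvalues drawn from $\{\theta_0,\dots,\theta_D\}$, but since $\dim M\hat{C}=D$ while there are $D+1$ possible eigenvalues, exactly one $E_j$ must vanish on $M\hat{C}$, and you never determine which. The definition of $\widetilde{\Phi}$ specifies $\{E_i\}_{i=0}^{D-1}$, so for it to be a Leonard system you must have $E_D|_{M\hat{C}}=0$, i.e., $E_D\hat{C}=0$. Without this, your later check that $\alpha+\beta q^{-i}+\gamma q^i=\theta_i$ is vacuous: that is just the algebraic identity \eqref{e-val} and does not confirm that $E_i|_{M\hat{C}}\ne 0$ for each $0\leqslant i\leqslant D-1$.

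The paper closes this gap by invoking the Delsarte clique property already noted in Section~\ref{S:DPG}: since $|C|$ attains the Hoffman bound $1-\theta_0\theta_D^{-1}$, and that bound arises from the positive-semidefiniteness of $E_D$, equality forces $E_D\hat{C}=0$. With this established, the rest of your proof (the standard basis via $\hat{C}_i=\widetilde{E}_i^*\hat{X}$ and $\hat{X}\in E_0M\hat{C}$, and the identification of the parameter sequence from the intersection numbers and dual eigenvalues) matches the paper's argument.
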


\begin{proof}
We f\/irst remark that $E_D\hat{C}=0$. Indeed, the Hof\/fman bound $1-\theta_0\theta_D^{-1}$ on the size of a~clique follows from the fact that $E_D$ is positive semidef\/inite, and the bound is attained precisely when the characteristic vector of the clique vanishes on $E_DV$.
From this comment, \eqref{completely regular},~and~\eqref{3products;C}, it follows that $\tilde{\Phi}$ is a Leonard system on $M\hat{C}$.
Moreover, $\{\hat{C}_i\}_{i=0}^{D-1}$ is a $\widetilde{\Phi}$-standard basis for $M\hat{C}$, since $\hat{C}_i=\widetilde{E}_i^*\hat{X}$, $0\leqslant i\leqslant D-1$, and $\hat{X}\in E_0M\hat{C}$. In view of~\eqref{Phit-std}, the parameter array of $\widetilde{\Phi}$ can be computed from \eqref{e-val}, \eqref{int num C;q}, and \eqref{dual-e-val;tilde}, as in the proof of Proposition~\ref{Phi}.
\end{proof}

We note that the intersection numbers of $\widetilde{\Phi}$ coincide with those of $C$.

We next consider the $\widetilde{T}$-module $M\hat{C}^{\perp}$, the orthogonal complement of $M\hat{C}$ in $\mathbf{W}$.
From \eqref{inner products} it follows that
\begin{gather*}
	\widetilde{w}:=\widetilde{c}\cdot\left(E_1\hat{x}-\frac{1}{1+q^e}E_1\hat{C}\right) \in E_1M\hat{C}^{\perp},
\end{gather*}
where we take
\begin{gather*}
	\widetilde{c}:=\frac{|X|q(1+q^e)}{\beta^*(1-q)}
\end{gather*}
for convenience.
By \eqref{E_1}, \eqref{dual-e-val}, \eqref{E_1C}, and \eqref{dual-e-val;tilde}, we obtain
\begin{gather*}
	\widetilde{w}=\sum_{i=0}^{D-1}\big({-}q^{e-i}\hat{C}^-_i+q^{-i}\hat{C}^+_i\big).
\end{gather*}
Let
\begin{gather}\label{ui.perp.tilde}
	\widetilde{u}^{\perp}_i :=\widetilde{E}_i^*\widetilde{w}= -q^{e-i}\hat{C}^-_i + q^{-i}\hat{C}^+_i, \qquad 0\leqslant i\leqslant D-1.
\end{gather}
Since $\dim(M\hat{C}^{\perp})=D$, the vectors $\{\widetilde{u}^{\perp}_i\}^{D-1}_{i=0}$ form an orthogonal basis for $M\hat{C}^{\perp}$.

Consider the sequence
\begin{gather*}
	\widetilde{\Phi}^{\perp} := \big(A;\widetilde{A}^*;\{E_i\}^D_{i=1};\{\widetilde{E}^*_i\}^{D-1}_{i=0}\big)\big|_{M\hat{C}^{\perp}}.
\end{gather*}

\begin{Proposition}\label{para.seq.tilde}
The $\widetilde{T}$-module $M\hat{C}^{\perp}$ is irreducible, and $\widetilde{\Phi}^{\perp}$ is a Leonard system on $M\hat{C}^{\perp}$ that has dual $q$-Krawtchouk type with parameter sequence $(\widetilde{\alpha}^{\perp},\widetilde{\alpha}^{*\perp},\widetilde{\beta}^{\perp},\widetilde{\beta}^{*\perp},\widetilde{\gamma}^{\perp}\,;q,D-1)$, where
\begin{gather*}
	\big(\widetilde{\alpha}^{\perp}, \widetilde{\alpha}^{*\perp}, \widetilde{\beta}^{\perp}, \widetilde{\beta}^{*\perp}, \widetilde{\gamma}^{\perp}\big)
	= \left(\alpha, \alpha^*,\beta q^{-1}, \frac{1+q^{e-1}}{1+q^e}\beta^*, \gamma q\right).
\end{gather*}
Moreover, $\{\widetilde{u}^{\perp}_i\}^{D-1}_{i=0}$ is a $\widetilde{\Phi}^{\perp}$-standard basis for $M\hat{C}^{\perp}$.
\end{Proposition}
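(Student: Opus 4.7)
The plan is to mirror the proof of Proposition~\ref{para.seq} almost line-for-line, exploiting the symmetry between the roles played by $M\hat{x}^{\perp}$ and $M\hat{C}^{\perp}$ inside $\mathbf{W}$. First I would compute the action of $A$ on the basis $\{\widetilde{u}^{\perp}_i\}_{i=0}^{D-1}$ by applying Lemma~\ref{actA} to the defining expression \eqref{ui.perp.tilde}; the expected outcome is a tridiagonal recurrence
\[
A.\widetilde{u}^{\perp}_i = \widetilde{b}^{\perp}_{i-1}\widetilde{u}^{\perp}_{i-1} + \widetilde{a}^{\perp}_i \widetilde{u}^{\perp}_i + \widetilde{c}^{\perp}_{i+1}\widetilde{u}^{\perp}_{i+1}, \qquad 0\leqslant i\leqslant D-1,
\]
(with $\widetilde{u}^{\perp}_{-1}=\widetilde{u}^{\perp}_D:=0$) whose off-diagonal coefficients are non-zero. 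The particular combination $-q^{e-i}\hat{C}^-_i+q^{-i}\hat{C}^+_i$ in \eqref{ui.perp.tilde} is engineered precisely so that the ``mixed'' cross terms arising from $A.\hat{C}^-_i$ and $A.\hat{C}^+_i$ reorganize cleanly into multiples of $\widetilde{u}^{\perp}_{i\pm 1}$; this is essentially forced by $\widetilde{w}\in E_1M\hat{C}^{\perp}$, so the recurrence is guaranteed to close on the $\widetilde{u}^{\perp}_i$ alone.

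Once the recurrence is in hand, irreducibility of $M\hat{C}^{\perp}$ follows exactly as in Proposition~\ref{para.seq}: any $\widetilde{T}$-submodule is $\widetilde{M}^*$-invariant and hence, by Lemma~\ref{act.A*,A*t}, is a span of a subset of the orthogonal basis $\{\widetilde{u}^{\perp}_i\}$, and the non-vanishing of the off-diagonal coefficients then forces this subset to be empty or the whole index set. Iterating the recurrence also gives $M\widetilde{u}^{\perp}_0 = M\hat{C}^{\perp}$, so $M\hat{C}^{\perp}$ is spanned by $\{E_i\widetilde{u}^{\perp}_0\}_{i=0}^{D}$. Since $E_0\mathbf{W}=\mathbb{C}\hat{X}\subset M\hat{C}$ we get $E_0\widetilde{u}^{\perp}_0=0$, and a dimension count ($\dim M\hat{C}^{\perp}=D$) then forces $E_i\widetilde{u}^{\perp}_0\ne 0$ for every $1\leqslant i\leqslant D$. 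Together with \eqref{3products;C} and the diagonal action $\widetilde{A}^*.\widetilde{u}^{\perp}_i = \widetilde{\theta}^*_i\widetilde{u}^{\perp}_i$ from Lemma~\ref{act.A*,A*t}, this verifies axioms (LS\ref{LS1})--(LS\ref{LS4}) for $\widetilde{\Phi}^{\perp}$ and identifies $\{\widetilde{u}^{\perp}_i\}_{i=0}^{D-1}$ as a $\widetilde{\Phi}^{\perp}$-standard basis.

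For the parameter array, the eigenvalues $\theta_1,\dots,\theta_D$ and dual eigenvalues $\widetilde{\theta}^*_0,\dots,\widetilde{\theta}^*_{D-1}$ are read off from \eqref{e-val} and \eqref{dual-e-val;tilde}; matching them against the dual $q$-Krawtchouk templates $\mytheta_i = \widetilde{\alpha}^{\perp}+\widetilde{\beta}^{\perp}q^{-i}+\widetilde{\gamma}^{\perp}q^i$ and $\mytheta^*_i = \widetilde{\alpha}^{*\perp}+\widetilde{\beta}^{*\perp}q^{-i}$ (with the eigenvalue index shifted by one in the first case so that the diameter-$(D-1)$ template applies to $\theta_{i+1}$) pins down all five parameters, and consistency with the intersection numbers $\widetilde{b}^{\perp}_i,\widetilde{c}^{\perp}_i$ via \eqref{bi, ci : dual-qK} provides a cross-check. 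The main obstacle is the opening step: the $q$-arithmetic needed to simplify $A.\widetilde{u}^{\perp}_i$ into clean tridiagonal form is purely mechanical but bookkeeping-heavy, and it is there that the shifts $\widetilde{\beta}^{\perp}=\beta q^{-1}$ and $\widetilde{\gamma}^{\perp}=\gamma q$ (relative to $\widetilde{\Phi}$) actually emerge; everything after that is structural and parallels Proposition~\ref{para.seq} verbatim.
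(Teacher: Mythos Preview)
Your proposal is correct and follows essentially the same route as the paper's own proof, which simply writes down the explicit tridiagonal recurrence
\[
A.\widetilde{u}^{\perp}_i = \frac{q^{D+e-1}-q^{e+i-1}}{q-1}\widetilde{u}^{\perp}_{i-1} + \frac{q^{e+i}-q^e-q^{i+1}+1}{q-1}\widetilde{u}^{\perp}_i + \frac{q^{i+2}-q}{q-1}\widetilde{u}^{\perp}_{i+1}
\]
(obtained from Lemma~\ref{actA}) and then defers everything else to the argument of Proposition~\ref{para.seq}. One small point worth tightening: the bare ``dimension count'' does not by itself pin down \emph{which} $D$ of the $E_i\widetilde{u}^{\perp}_0$ are non-zero; you should also invoke $E_1\widetilde{u}^{\perp}_0\ne 0$ (from $0\ne\widetilde{w}\in E_1M\hat{C}^{\perp}$) together with the contiguity forced by \eqref{3products;C}, exactly as in the proof of Proposition~\ref{para.seq}.
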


\begin{proof}
By Lemma \ref{actA}, we have
\begin{gather*}
	A.\widetilde{u}^{\perp}_i = \frac{q^{D+e-1}-q^{e+i-1}}{q-1}\widetilde{u}^{\perp}_{i-1} + \frac{q^{e+i}-q^e-q^{i+1}+1}{q-1} \widetilde{u}^{\perp}_i + \frac{q^{i+2}-q}{q-1}\widetilde{u}^{\perp}_{i+1}, \quad 0\leqslant i\leqslant D-1,
\end{gather*}
where $\widetilde{u}_{-1}^{\perp}=\widetilde{u}_D^{\perp}:=0$.
The result is proved using this identity as in the proof of Proposi\-tion~\ref{para.seq}.
\end{proof}

Let $\widetilde{\pi}$ be the orthogonal projection from $\mathbf{W}$ onto $M\hat{C}$, i.e.,
\begin{gather*}
	(1-\widetilde{\pi})M\hat{C} = 0, \qquad \widetilde{\pi} M\hat{C}^{\perp} = 0.
\end{gather*}
Then we have $A\widetilde{\pi}=\widetilde{\pi} A$ and $\widetilde{A}^* \widetilde{\pi} = \widetilde{\pi} \widetilde{A}^*$ on $\mathbf{W}$.
Observe that
\begin{gather}\label{C;v-com-2}
	\hat{C}^-_i = \frac{1}{1+q^e}\hat{C}_i - \frac{q^i}{1+q^e} \widetilde{u}^{\perp}_i, \qquad \hat{C}^+_i = \frac{q^e}{1+q^e}\hat{C}_i + \frac{q^i}{1+q^e} \widetilde{u}^{\perp}_i
\end{gather}
for $0\leqslant i \leqslant D-1$, from which it follows that
\begin{gather}\label{act.pitilde}
	\widetilde{\pi}.\hat{C}^-_i = \frac{1}{1+q^e}\big(\hat{C}^-_i+\hat{C}^+_i\big),
	\qquad
	\widetilde{\pi}.\hat{C}^+_i = \frac{q^e}{1+q^e}\big(\hat{C}^-_i+\hat{C}^+_i\big)
\end{gather}
for $0 \leqslant i \leqslant D-1$.

\section[A nil-DAHA of type $(C^{\vee}_1, C_1)$]{A nil-DAHA of type $\boldsymbol{(C^{\vee}_1, C_1)}$}\label{S:nil-DAHA}

In this section, we introduce a nil-DAHA of type $(C^{\vee}_1, C_1)$ and show that the primary $\mathbf{T}$-modu\-le~$\mathbf{W}$ also has a module structure for this algebra. Let $\kappa_0,\kappa_1,\kappa'_0,\kappa'_1\in\mathbb{C}$ be non-zero scalars. Recall that the \emph{DAHA} $\mathcal{H}=\mathcal{H}(\kappa_0,\kappa_1,\kappa'_0,\kappa'_1;q)$ \emph{of type} $(C^{\vee}_1, C_1)$ is generated by $\mathcal{T}_0^{\pm 1}$, $\mathcal{T}_1^{\pm 1}$, and~$\mathcal{X}^{\pm 1}$, subject to the relations (\cite[Section~6.4]{IM}, \cite[Section~3]{SS})
\begin{gather*}
	(\mathcal{T}_i-\kappa_i)\big(\mathcal{T}_i+\kappa^{-1}_i\big)=0, \qquad (\mathcal{T}'_i-\kappa'_i)\big(\mathcal{T}'_i+\kappa^{\prime -1}_i\big)=0, \qquad i=0,1,
\end{gather*}
where
\begin{gather*}
	\mathcal{T}_0':=q^{-1/2}\,\mathcal{X}\mathcal{T}_0^{-1}, \qquad \mathcal{T}_1':=\mathcal{X}^{-1}\mathcal{T}_1^{-1}.
\end{gather*}
Cherednik and Orr \cite{CO3} (cf.~\cite{CO1,CO2}) introduced the concept of \emph{nil-DAHAs} for reduced af\/f\/ine root systems.
The procedure for obtaining nil-DAHAs from ordinary DAHAs discussed in \cite[Section~2.5]{CO3} works for the non-reduced af\/f\/ine root systems of type $(C_n^{\vee},C_n)$ as well, with a bit of extra f\/lexibility in the specialization.\footnote{We learned this procedure for type $(C_n^{\vee},C_n)$ from Daniel Orr.}
It will turn out that the following specialization for type $(C^{\vee}_1, C_1)$ is the one which is well-suited to our situation:

\begin{Definition}\label{nil.H} Let $\kappa,\kappa' \in \mathbb{C}$ be non-zero scalars. Let $\overline{\mathcal{H}}=\overline{\mathcal{H}}(\kappa,\kappa')$ be the $\mathbb{C}$-algebra generated by~$\mathcal{T}^{\pm 1}$,~$\mathcal{U}$, and~$\mathcal{X}^{\pm 1}$, subject to the relations\footnote{The def\/inition of a nil-DAHA of type $(C_1^{\vee},C_1)$ given here is dif\/ferent from the one in \cite[Def\/inition~5.1]{FPSAC}. In fact, the former is a homomorphic image of the latter.}
\begin{gather*}
	(\mathcal{T} -\kappa)\big(\mathcal{T}+\kappa^{-1}\big)=0, \qquad (\mathcal{T}' -\kappa')\big(\mathcal{T}'+\kappa^{\prime -1}\big)=0, \qquad \mathcal{U}(\mathcal{U}+1)=0, \qquad \mathcal{U}^{\prime 2}=0,
\end{gather*}
where
\begin{gather*}
	\mathcal{T}'=\mathcal{X}\mathcal{T}^{-1}, \qquad \mathcal{U}'=\mathcal{X}^{-1}(\mathcal{U}+1).
\end{gather*}
We call $\overline{\mathcal{H}}$ a \emph{nil-DAHA of type} $(C^{\vee}_1, C_1)$.
\end{Definition}

\begin{Remark}
Our nil-DAHA $\overline{\mathcal{H}}$ is obtained from $\mathcal{H}$ as follows. Let $\ddot{\mathcal{T}}_1:=\kappa_1\mathcal{T}_1$, $\ddot{\mathcal{T}}'_1:=\kappa_1\mathcal{T}'_1$.
Then
\begin{gather}\label{ddot t'1}
	\ddot{\mathcal{T}}'_1=\kappa_1\mathcal{X}^{-1}\big(\mathcal{T}_1-\kappa_1+\kappa_1^{-1}\big)=\mathcal{X}^{-1}\big(\ddot{\mathcal{T}}_1-\kappa_1^2+1\big),
\end{gather}
and we have
\begin{gather}\label{Hecke in ddot}
	\big(\ddot{\mathcal{T}}_1-\kappa_1^2\big)\big(\ddot{\mathcal{T}}_1+1\big)=0, \qquad \big(\ddot{\mathcal{T}}'_1-\kappa_1\kappa_1'\big)\big(\ddot{\mathcal{T}}'_1+\kappa_1\kappa_1^{\prime-1}\big)=0.
\end{gather}
Take $\mathcal{T}_0^{\pm1},\mathcal{X}^{\pm1}$, and $\ddot{\mathcal{T}}_1$ as new generators for $\mathcal{H}$. We now set $\kappa_1'=1$ and let $\kappa_1\rightarrow 0$. Then~\eqref{ddot t'1} and~\eqref{Hecke in ddot} become
\begin{gather*}
	\ddot{\mathcal{T}}'_1=\mathcal{X}^{-1}\big(\ddot{\mathcal{T}}_1+1\big), \qquad \ddot{\mathcal{T}}_1\big(\ddot{\mathcal{T}}_1+1\big)=0, \qquad \big(\ddot{\mathcal{T}}'_1\big)^2=0,
\end{gather*}
and we obtain the presentation of $\overline{\mathcal{H}}$ in Def\/inition~\ref{nil.H} by the replacement
\begin{gather*}
\big(\mathcal{T}_0,q^{-1/2}\,\mathcal{X},\ddot{\mathcal{T}}_1,\kappa_0,\kappa'_0\big)\rightarrow(\mathcal{T},\mathcal{X},\mathcal{U},\kappa,\kappa').
\end{gather*}
We note that, if we instead set $\kappa_1'=\kappa_1$ and let $\kappa_1\rightarrow 0$, then the second identity in \eqref{Hecke in ddot} becomes
\begin{gather*}
	\ddot{\mathcal{T}}_1'\big(\ddot{\mathcal{T}}_1'+1\big)=0.
\end{gather*}
This gives another version of a nil-DAHA of type $(C_1^{\vee},C_1)$, which we expect would be suitable for the Grassmann graphs \cite[Section~9.3]{BCN} corresponding to the dual $q$-Hahn polynomials \cite[Section~14.7]{KLS2010B}. We may also apply the above procedure to~$\mathcal{T}_0$ as well to get more variations.
\end{Remark}

\begin{Remark}By either or both of the replacements $\mathcal{T}\rightarrow-\mathcal{T}$, $\mathcal{X}\rightarrow-\mathcal{X}$, it is immediate to see that the four nil-DAHAs $\overline{\mathcal{H}}(\pm\kappa,\pm\kappa')$ are all isomorphic.
\end{Remark}

\begin{Remark}\label{Mazzocco's work 1}Mazzocco \cite{Mazzocco2016N} def\/ined seven new algebras as conf\/luences of the DAHA $\mathcal{H}$, and established a new link between the theory of the Painlev\'{e} equations and part of the left side of the $q$-Askey scheme. One of these algebras, called $\mathcal{H}_{\mathrm{III}}$, turns out to be isomorphic to the $\mathbb{C}$-algebra having the same generators and relations as the nil-DAHA $\overline{\mathcal{H}}$, plus one further relation $\mathcal{U}'=q\mathcal{U}\mathcal{X}$; cf.~\cite[equations~(3.86)--(3.91)]{Mazzocco2016N}.
\end{Remark}

For the rest of this paper, we set
\begin{gather}\label{kappa}
	\kappa=q^{-e/2}, \qquad \kappa' = \sqrt{-1} q^{-D/2}
\end{gather}
in Def\/inition \ref{nil.H}, where we recall that $q$ is assumed to be a prime power in our context. Our f\/irst goal is to def\/ine a $2D$-dimensional representation of $\overline{\mathcal{H}}=\overline{\mathcal{H}}(\kappa,\kappa')$. To this end, we consider the following matrices:
\begin{itemize}\itemsep=0pt
\item for $0 \leqslant i \leqslant D-1$, let
\begin{gather*}
	t(i) = \begin{pmatrix}
			q^{-e/2}-q^{e/2} & q^{e/2} \\
			q^{-e/2} & 0
		\end{pmatrix}, \qquad
	u'(i) = \begin{pmatrix}
			0 & 0\\
			-\sqrt{-1}\,q^{(D-e)/2-i} & 0
		\end{pmatrix};
\end{gather*}
\item for $1 \leqslant i \leqslant D-1$, let
\begin{gather*}
	t'(i) = \sqrt{-1}\begin{pmatrix}
			q^{-D/2}(q^D-q^i+1) & q^{D/2}(q^{i-D}-1)\\
			q^{-D/2}(1-q^i) & q^{i-D/2}
		\end{pmatrix}, \qquad
	u(i) = \begin{pmatrix}
			-1 & 1-q^{D-i} \\
			0 & 0
		\end{pmatrix},
\end{gather*}
and let
$t'(0) = \begin{pmatrix} \sqrt{-1}\,q^{-D/2} \end{pmatrix}$,
$t'(D) = \begin{pmatrix} \sqrt{-1}\,q^{-D/2} \end{pmatrix}$,
$u(0) = \begin{pmatrix} 0 \end{pmatrix}$, and
$u(D) = \begin{pmatrix} -1 \end{pmatrix}$.
\end{itemize}

\begin{Lemma}\label{t+t^(-1)}The following $(i)$, $(ii)$ hold:
\begin{enumerate}\itemsep=0pt
\item[$(i)$] $(t(i)-\kappa)(t(i)+\kappa^{-1})=0$ for $0\leqslant i\leqslant D-1$.
\item[$(ii)$] $(t'(i)-\kappa')(t'(i)+\kappa^{\prime -1})=0$ for $0\leqslant i\leqslant D$.
\end{enumerate}
\end{Lemma}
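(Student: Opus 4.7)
The plan is to verify both identities by a Cayley--Hamilton argument. For a $2\times 2$ matrix $M$ over $\mathbb{C}$ and scalars $\alpha,\beta$, the relation $(M-\alpha)(M+\beta)=0$ is equivalent to $M^{2}-(\alpha-\beta)M-\alpha\beta=0$, which (by Cayley--Hamilton) holds if and only if $\operatorname{trace}(M)=\alpha-\beta$ and $\det(M)=-\alpha\beta$. So for each of $t(i)$ and $t'(i)$ I only need to check two scalar identities.

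For part (i), I compute from the definition of $t(i)$ that
\begin{gather*}
\operatorname{trace}(t(i))=q^{-e/2}-q^{e/2}, \qquad \det(t(i))=-q^{e/2}\cdot q^{-e/2}=-1,
\end{gather*}
independently of $i$. With $\kappa=q^{-e/2}$ these are exactly $\kappa-\kappa^{-1}$ and $-\kappa\cdot\kappa^{-1}$, so the claim follows.

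For part (ii) with $1\leqslant i\leqslant D-1$, I compute directly from the $2\times 2$ definition of $t'(i)$. The trace gives
\begin{gather*}
\operatorname{trace}(t'(i))=\sqrt{-1}\bigl[q^{-D/2}(q^{D}-q^{i}+1)+q^{i-D/2}\bigr]=\sqrt{-1}\bigl(q^{D/2}+q^{-D/2}\bigr),
\end{gather*}
after the $q^{i-D/2}$ terms cancel, while the determinant expands as
\begin{gather*}
\det(t'(i))=-\bigl[q^{i-D}(q^{D}-q^{i}+1)-(q^{i-D}-1)(1-q^{i})\bigr]=-1,
\end{gather*}
again after a routine cancellation of the $q^{i}$, $q^{i-D}$, and $q^{2i-D}$ monomials. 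Since $\kappa'=\sqrt{-1}\,q^{-D/2}$ gives $\kappa^{\prime -1}=-\sqrt{-1}\,q^{D/2}$, these match $\kappa'-\kappa^{\prime-1}$ and $-\kappa'\kappa^{\prime-1}=-1$ respectively, completing this case. For the boundary cases $i=0$ and $i=D$, the matrix $t'(i)$ is the $1\times 1$ scalar $\sqrt{-1}\,q^{-D/2}=\kappa'$, so $t'(i)-\kappa'=0$ and the quadratic relation holds trivially.

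The only non-trivial work is the determinant in case (ii); beyond that, the proof is a direct verification. No step looks likely to become an obstacle --- one just needs to be careful in keeping track of the powers of $q$ and the factor of $\sqrt{-1}$ when expanding $\det(t'(i))$.
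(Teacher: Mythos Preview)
Your proof is correct and follows essentially the same approach as the paper: the paper's argument is simply ``Immediate from $\det(t(i))=-1$ and $\operatorname{trace}(t(i))=\kappa-\kappa^{-1}$'' for~(i), with~(ii) declared similar. One small quibble: the ``if and only if'' you state is only needed in the ``if'' direction (which is Cayley--Hamilton); the converse can fail for scalar matrices, but this does not affect your argument.
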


\begin{proof}(i) Immediate from $\det(t(i))=-1$ and $\operatorname{trace}(t(i))=\kappa-\kappa^{-1}$.

(ii) Similar to the proof of (i) above.
\end{proof}

\begin{Lemma}\label{u}
The following $(i)$, $(ii)$ hold:
\begin{enumerate}\itemsep=0pt
\item[$(i)$] $u(i)(u(i)+1)=0$ for $0\leqslant i\leqslant D$.
\item[$(ii)$] $(u'(i))^2=0$ for $0\leqslant i\leqslant D-1$.
\end{enumerate}
\end{Lemma}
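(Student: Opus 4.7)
The approach is direct verification from the explicit matrix forms, exactly as in the proof of Lemma~\ref{t+t^(-1)}.

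For part (i), I would split into three cases. When $1\leqslant i\leqslant D-1$, the matrix $u(i)$ is $2\times 2$, so it suffices to check that its characteristic polynomial equals $X(X+1)$; this reduces to verifying $\det(u(i))=0$ and $\operatorname{trace}(u(i))=-1$, both of which are immediate from the upper-triangular form
\begin{gather*}
u(i)=\begin{pmatrix}-1 & 1-q^{D-i}\\ 0 & 0\end{pmatrix}.
\end{gather*}
By Cayley--Hamilton, $u(i)(u(i)+1)=0$. The boundary cases $i=0$ and $i=D$ are trivial: $u(0)=(0)$ annihilates $u(0)+1$, and $u(D)=(-1)$ annihilates $u(D)+1=(0)$.

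For part (ii), the matrix
\begin{gather*}
u'(i)=\begin{pmatrix}0 & 0\\ -\sqrt{-1}\,q^{(D-e)/2-i} & 0\end{pmatrix}
\end{gather*}
is strictly lower triangular, hence nilpotent of index at most two, so $(u'(i))^2=0$ by inspection. Equivalently, both $\det(u'(i))=0$ and $\operatorname{trace}(u'(i))=0$, so Cayley--Hamilton gives $(u'(i))^2=0$.

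There is no real obstacle here; both statements follow from a one-line matrix computation (or equivalently a trace-and-determinant check combined with Cayley--Hamilton). The only thing to keep straight is that $u(i)$ has different sizes depending on whether $i$ is an interior or boundary index, which is why the two endpoint cases are handled separately.
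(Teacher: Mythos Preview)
Your proof is correct and is exactly the kind of direct computation the paper has in mind; the paper's own proof is simply the word ``Routine.'' Your trace--determinant/Cayley--Hamilton argument and the separate handling of the $1\times 1$ boundary cases are precisely what is required.
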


\begin{proof}
Routine.
\end{proof}

Def\/ine the $2D \times 2D$ block diagonal matrices $\mathfrak{t}$, $\mathfrak{t}'$, $\mathfrak{u}$, and $\mathfrak{u}'$ by
\begin{gather*}
	\mathfrak{t} = \operatorname{blockdiag}\bigl( t(0), t(1), \dots, t(D-1) \bigr), \\
	\mathfrak{t}' = \operatorname{blockdiag}\bigl( t'(0), t'(1), \dots, t'(D-1), t'(D) \bigr), \\
	\mathfrak{u} = \operatorname{blockdiag}\bigl( u(0), u(1), \dots, u(D-1), u(D) \bigr), \\
	\mathfrak{u}' = \operatorname{blockdiag}\bigl( u'(0), u'(1), \dots, u'(D-1) \bigr).
\end{gather*}
Moreover, let
\begin{gather*}
	\mathfrak{x} = \mathfrak{t}'\mathfrak{t}.
\end{gather*}

\begin{Proposition}\label{representation}
There is a representation $\overline{\mathcal{H}}\rightarrow \operatorname{Mat}_{2D}(\mathbb{C})$ such that
\begin{gather*}
	\mathcal{T}\mapsto\mathfrak{t}, \qquad \mathcal{T}'\mapsto\mathfrak{t}', \qquad \mathcal{U}\mapsto\mathfrak{u}, \qquad \mathcal{U}'\mapsto\mathfrak{u}', \qquad \mathcal{X}\mapsto\mathfrak{x}.
\end{gather*}
\end{Proposition}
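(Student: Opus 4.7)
The plan is to verify that the specified matrices satisfy all the defining relations of $\overline{\mathcal{H}}$ in Definition \ref{nil.H}, keeping in mind that $\mathcal{T}'$ and $\mathcal{U}'$ are abbreviations for the words $\mathcal{X}\mathcal{T}^{-1}$ and $\mathcal{X}^{-1}(\mathcal{U}+1)$ in the generators $\mathcal{T}^{\pm 1},\mathcal{U},\mathcal{X}^{\pm 1}$. The Hecke-type quadratic relation on $\mathfrak{t}$ follows from Lemma \ref{t+t^(-1)}(i) applied to each of the $2\times 2$ diagonal blocks, and in particular forces $\mathfrak{t}$ to be invertible (the two eigenvalues $\kappa$ and $-\kappa^{-1}$ are non-zero). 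Lemma \ref{t+t^(-1)}(ii) similarly shows that $\mathfrak{t}'$ is invertible and satisfies its quadratic relation; thus $\mathfrak{x}=\mathfrak{t}'\mathfrak{t}$ is invertible, so the image of $\mathcal{X}^{-1}$ is well defined, and the identification $\mathfrak{t}' = \mathfrak{x}\mathfrak{t}^{-1}$ is automatic. The nilpotent relation $\mathfrak{u}(\mathfrak{u}+1)=0$ follows from Lemma \ref{u}(i), again block-by-block.

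The only non-automatic step is to verify that the image of $\mathcal{U}'=\mathcal{X}^{-1}(\mathcal{U}+1)$ coincides with the prescribed matrix $\mathfrak{u}'$; once this is in hand, the remaining relation $(\mathfrak{u}')^2=0$ is handled by Lemma \ref{u}(ii). Equivalently, it suffices to establish the key identity
\begin{equation*}
\mathfrak{t}'\mathfrak{t}\mathfrak{u}' \;=\; \mathfrak{u} + I_{2D}.
\end{equation*}
I would prove this by evaluating both sides on the ordered basis $\mathcal{C}$ of \eqref{1st basis}: using the blockwise formulas, $\mathfrak{u}'.\hat{C}_i^+ = 0$ and $\mathfrak{u}'.\hat{C}_i^- = -\sqrt{-1}\,q^{(D-e)/2-i}\hat{C}_i^+$, then $\mathfrak{t}.\hat{C}_i^+ = q^{e/2}\hat{C}_i^-$, and finally the appropriate block of $\mathfrak{t}'$ is applied, which is the only step that can introduce an off-block contribution (specifically, mixing $\hat{C}_i^-$ with $\hat{C}_{i-1}^+$ via the $2\times 2$ block $t'(i)$). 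These off-block contributions are exactly what match the $(1-q^{D-i})$ entries of $u(i)$.

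The main obstacle is the bookkeeping created by the two mismatched block structures: $\mathfrak{t}$ and $\mathfrak{u}'$ are block diagonal in $D$ blocks of size $2$ indexed by $i=0,\ldots,D-1$, while $\mathfrak{t}'$ and $\mathfrak{u}$ are block diagonal in $D+1$ blocks of sizes $1,2,\ldots,2,1$; the two structures are staggered by one position. This forces a case split into the bulk range $1\leqslant i\leqslant D-2$ and the two endpoint cases $i=0,D-1$, where the $1\times 1$ blocks $t'(0) = t'(D) = \sqrt{-1}\,q^{-D/2}$, $u(0)=(0)$, and $u(D)=(-1)$ intervene. In each case the verification reduces to a short arithmetic identity between powers of $q$, and no appeal is needed to the boundary conventions \eqref{empty in some cases} since the argument is purely a comparison of $2D\times 2D$ complex matrices.
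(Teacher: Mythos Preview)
Your proposal is correct and follows essentially the same approach as the paper's proof: use Lemmas~\ref{t+t^(-1)} and~\ref{u} for the four quadratic relations, note invertibility of $\mathfrak{t}$, $\mathfrak{t}'$, $\mathfrak{x}$, and reduce everything to the single identity $\mathfrak{x}\mathfrak{u}'=\mathfrak{u}+I_{2D}$. The paper simply declares this last identity ``a straightforward matter,'' whereas you spell out the staggered-block bookkeeping and endpoint cases, which is a welcome elaboration but not a different strategy.
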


\begin{proof}From Lemmas \ref{t+t^(-1)} and \ref{u} it follows that
\begin{gather*}
	(\mathfrak{t} -\kappa)\big(\mathfrak{t}+\kappa^{-1}\big)=(\mathfrak{t}' -\kappa')\big(\mathfrak{t}'+\kappa^{\prime -1}\big)=\mathfrak{u}(\mathfrak{u}+1)=\mathfrak{u}^{\prime 2}=0.
\end{gather*}
In particular, $\mathfrak{t}$, $\mathfrak{t}'$, and $\mathfrak{x}$ are invertible. We have $\mathfrak{t}'=\mathfrak{x}\mathfrak{t}^{-1}$ by def\/inition, and it is a straightforward matter to show that $\mathfrak{x}\mathfrak{u}'=\mathfrak{u}+1$. It follows that $\mathfrak{t}^{\pm 1}$, $\mathfrak{u}$, and $\mathfrak{x}^{\pm 1}$ satisfy the def\/ining relations for~$\overline{\mathcal{H}}$, and the result follows.
\end{proof}

\begin{Corollary}\label{H-mod}There is an $\overline{\mathcal{H}}$-module structure on $\mathbf{W}$ such that $\mathfrak{t}$, $\mathfrak{t}'$, $\mathfrak{u}$, $\mathfrak{u}'$, and~$\mathfrak{x}$ are respectively the matrices representing the actions of $\mathcal{T}$, $\mathcal{T}'$, $\mathcal{U}$, $\mathcal{U}'$, and~$\mathcal{X}$ in the ordered basis~$\mathcal{C}$ from~\eqref{1st basis}.
\end{Corollary}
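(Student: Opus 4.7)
The plan is to transport the matrix representation from Proposition \ref{representation} to $\mathbf{W}$ via the chosen ordered basis, so the proof will be essentially a formal one. First I would note that by Lemma \ref{|C|;q-exp} all the sets $C^{\pm}_i$ are non-empty, so the $2D$ characteristic vectors listed in \eqref{1st basis} are linearly independent, giving $\dim(\mathbf{W})=2D$. The ordered basis $\mathcal{C}$ then identifies $\mathbf{W}$ with $\mathbb{C}^{2D}$ as a $\mathbb{C}$-vector space, which in turn induces a $\mathbb{C}$-algebra isomorphism
\begin{gather*}
\Psi\colon \operatorname{Mat}_{2D}(\mathbb{C})\stackrel{\sim}{\longrightarrow}\operatorname{End}(\mathbf{W})
\end{gather*}
sending a matrix $M$ to the unique element of $\operatorname{End}(\mathbf{W})$ whose matrix with respect to $\mathcal{C}$ is $M$.

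Composing $\Psi$ with the algebra homomorphism $\rho\colon\overline{\mathcal{H}}\to\operatorname{Mat}_{2D}(\mathbb{C})$ provided by Proposition \ref{representation} yields an algebra homomorphism $\Psi\circ\rho\colon\overline{\mathcal{H}}\to\operatorname{End}(\mathbf{W})$, which is precisely an $\overline{\mathcal{H}}$-module structure on $\mathbf{W}$. By construction, for each of the generators $\mathcal{T}$, $\mathcal{T}'$, $\mathcal{U}$, $\mathcal{U}'$, $\mathcal{X}$, the matrix representing its action on $\mathbf{W}$ in the basis $\mathcal{C}$ is exactly $\mathfrak{t}$, $\mathfrak{t}'$, $\mathfrak{u}$, $\mathfrak{u}'$, $\mathfrak{x}$, respectively.

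There is really no obstacle at this step: all defining relations of $\overline{\mathcal{H}}$ were already verified at the matrix level in the proof of Proposition \ref{representation}, and $\Psi$, being an algebra isomorphism, automatically preserves them. The content of the corollary is simply that the $2D$-dimensional representation constructed in Proposition \ref{representation} can be read as acting on the concrete space $\mathbf{W}$ through the basis $\mathcal{C}$, which is what we will need in the next sections in order to compare the $\overline{\mathcal{H}}$-action with the $\mathbf{T}$-action and eventually extract the non-symmetric dual $q$-Krawtchouk polynomials.
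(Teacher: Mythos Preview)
Your argument is correct and is exactly the intended one: the paper states this corollary without proof because it is immediate from Proposition~\ref{representation} via the algebra isomorphism $\operatorname{Mat}_{2D}(\mathbb{C})\cong\operatorname{End}(\mathbf{W})$ induced by the basis~$\mathcal{C}$. Your write-up simply makes this transport-of-structure explicit, and there is nothing to add.
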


By Corollary \ref{H-mod}, $\mathbf{W}$ is now a module for both $\mathbf{T}$ and $\overline{\mathcal{H}}=\overline{\mathcal{H}}(\kappa,\kappa')$, where $\kappa$, $\kappa'$ are given as in \eqref{kappa}. We next discuss how the two module structures are related. Let
\begin{gather*}
	\mathcal{A} =\mathcal{X}+\mathcal{X}^{-1}, \qquad \mathcal{A}^* = \sqrt{-1}\,q^{-D/2}(\mathcal{T}\mathcal{U}'+\mathcal{U}\mathcal{T}'), \qquad \widetilde{\mathcal{A}}^* = \sqrt{-1}\,q^{-D/2}(\mathcal{T}\mathcal{U}'+q\,\mathcal{U}\mathcal{T}').
\end{gather*}
For the rest of this paper, we also let
\begin{gather}\label{dK parameter}
	\tau=\sqrt{-1}\,q^{-(D+e)/2}.
\end{gather}
Note that $\tau^2=\beta^{-1}\gamma$, where $\beta$, $\gamma$ are from Proposition~\ref{Phi}; cf.~\eqref{tau}.

The following four lemmas are checked by straightforward calculations. Recall \eqref{empty in some cases}.

\begin{Lemma}\label{Y.C-i}For $0 \leqslant i \leqslant D-1$, the actions of $\mathcal{X}^{\pm 1}$ on $\hat{C}^-_i$ are given as linear combinations with the following terms and coefficients:
\begin{gather*} \renewcommand\arraystretch{1.3}
	\mathcal{X}.\hat{C}^-_i\colon \
	\begin{array}{l | c}
	\text{\textup{term}} & \text{\textup{coef\/f\/icient}} \\
	\hline \hline
	\hat{C}^+_{i-1} & (\tau q^D+\tau^{-1})(q^{i-D}-1)\\
	\hat{C}^-_i & (\tau q^D+\tau^{-1})q^{i-D}\\
	\hat{C}^+_i & \tau(q^D-q^{i+1}+1)\\
	\hat{C}^-_{i+1} & \tau(1-q^{i+1})
	\end{array}
	\qquad
	\mathcal{X}^{-1}.\hat{C}^-_i\colon \
	\begin{array}{l | c}
	\text{\textup{term}} & \text{\textup{coef\/f\/icient}} \\
	\hline \hline
	\hat{C}^-_{i-1} & \tau^{-1}(1-q^{i-D}) \\
	\hat{C}^+_{i-1} & (\tau q^D+\tau^{-1})(1-q^{i-D})\\
	\hat{C}^-_i & 0\\
	\hat{C}^+_i & -\tau(q^D-q^i+1)
	\end{array}
\end{gather*}
\end{Lemma}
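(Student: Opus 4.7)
The plan is a direct computation from the block descriptions of $\mathfrak{t}$ and $\mathfrak{t}'$, using $\mathfrak{x}=\mathfrak{t}'\mathfrak{t}$ and $\mathfrak{x}^{-1}=\mathfrak{t}^{-1}\mathfrak{t}'^{-1}$. The main bookkeeping issue is that the two block structures on $\mathcal{C}$ do not align: the blocks of $\mathfrak{t}$ pair $\hat C^-_i$ with $\hat C^+_i$ for $0\leqslant i\leqslant D-1$, whereas those of $\mathfrak{t}'$ are the singletons $\{\hat C^-_0\}$ and $\{\hat C^+_{D-1}\}$ together with the pairs $\{\hat C^+_{i-1},\hat C^-_i\}$ for $1\leqslant i\leqslant D-1$.

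First I would apply $\mathfrak{t}$ to $\hat C^-_i$. Since $\hat C^-_i$ is the first basis vector in the block $t(i)$, the first column of $t(i)$ gives
\begin{gather*}
\mathfrak{t}.\hat C^-_i=\bigl(q^{-e/2}-q^{e/2}\bigr)\hat C^-_i+q^{-e/2}\hat C^+_i.
\end{gather*}
Next I would apply $\mathfrak{t}'$ separately to each summand. The vector $\hat C^-_i$ is the second basis vector of $t'(i)$ (or sits in the singleton $t'(0)$ when $i=0$), while $\hat C^+_i$ is the first basis vector of $t'(i+1)$ (or sits in the singleton $t'(D)$ when $i=D-1$). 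Reading off the appropriate columns and summing produces a four-term combination in $\hat C^+_{i-1}$, $\hat C^-_i$, $\hat C^+_i$, $\hat C^-_{i+1}$.

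The last step is to rewrite the coefficients in terms of $\tau=\sqrt{-1}\,q^{-(D+e)/2}$. The essential identity is $\sqrt{-1}\bigl(q^{(D-e)/2}-q^{(D+e)/2}\bigr)=\tau q^D+\tau^{-1}$, from which the asserted coefficients of $\hat C^+_{i-1}$ and $\hat C^-_i$ drop out after factoring out $q^{i-D}-1$ and $q^{i-D}$ respectively; the coefficients of $\hat C^+_i$ and $\hat C^-_{i+1}$ come out as $\tau(q^D-q^{i+1}+1)$ and $\tau(1-q^{i+1})$ directly. For $\mathcal{X}^{-1}$, the quadratic Hecke relations of Lemma~\ref{t+t^(-1)} supply the explicit inverses $\mathfrak{t}^{-1}=\mathfrak{t}-\bigl(\kappa-\kappa^{-1}\bigr)I$ and $\mathfrak{t}'^{-1}=\mathfrak{t}'-\bigl(\kappa'-\kappa'^{-1}\bigr)I$, which I would use to repeat the two-step computation with the order of factors reversed: the first application $\mathfrak{t}'^{-1}.\hat C^-_i$ reaches only the pair $\{\hat C^+_{i-1},\hat C^-_i\}$, and applying $\mathfrak{t}^{-1}$ afterwards lands the result in $\operatorname{span}\{\hat C^-_{i-1},\hat C^+_{i-1},\hat C^-_i,\hat C^+_i\}$, producing the stated table.

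The main obstacle is purely boundary accounting at $i=0$ and $i=D-1$, where the blocks of $\mathfrak{t}'$ degenerate from $2\times 2$ to $1\times 1$ and some of the ``phantom'' target vectors $\hat C^\pm_{-1}$ or $\hat C^\pm_D$ appear with non-zero stated coefficients. By the convention \eqref{empty in some cases} these vectors are zero, so the formulas remain correct; verifying this at the endpoints only requires checking that the contribution from the degenerate singleton block agrees with the value of the general formula at that $i$, which it does (for example, at $i=D-1$ the general coefficient $\tau(q^D-q^{i+1}+1)$ of $\hat C^+_i$ collapses to $\tau$, matching the contribution $q^{-e/2}\cdot\sqrt{-1}\,q^{-D/2}$ from the singleton $t'(D)$).
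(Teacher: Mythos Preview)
Your proposal is correct and is precisely the ``straightforward calculation'' the paper intends: the authors give no explicit proof for this lemma, simply declaring that the four lemmas (this one together with Lemmas~\ref{Y.C+i}, \ref{A.C+-}, \ref{B.C+-}) ``are checked by straightforward calculations'' from the block matrices $\mathfrak{t}$, $\mathfrak{t}'$, $\mathfrak{u}$, $\mathfrak{u}'$ and the relation $\mathfrak{x}=\mathfrak{t}'\mathfrak{t}$. Your explicit handling of the offset between the two block structures, the identity $\sqrt{-1}\bigl(q^{(D-e)/2}-q^{(D+e)/2}\bigr)=\tau q^D+\tau^{-1}$, and the boundary checks at $i=0$ and $i=D-1$ are exactly what that calculation requires.
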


\begin{Lemma}\label{Y.C+i}
For $0 \leqslant i \leqslant D-1$, the actions of $\mathcal{X}^{\pm 1}$ on $\hat{C}^+_i$ are given as linear combinations with the following terms and coefficients:
\begin{gather*}
	\renewcommand\arraystretch{1.3}
	\mathcal{X}.\hat{C}^+_i\colon \	
	\begin{array}{l | c}
	\text{\textup{term}} & \text{\textup{coef\/f\/icient}} \\
	\hline \hline
	\hat{C}^+_{i-1} & \tau^{-1}(1-q^{i-D})\\
	\hat{C}^-_i & -\tau^{-1}q^{i-D}\\
	\hat{C}^+_i & 0\\
	\hat{C}^-_{i+1} & 0
	\end{array}
	\qquad
	\mathcal{X}^{-1}.\hat{C}^+_i\colon \
	\begin{array}{l | c}
	\text{\textup{term}} & \text{\textup{coef\/f\/icient}} \\
	\hline \hline
	\hat{C}^-_i & \tau^{-1}q^{i-D+1} \\
	\hat{C}^+_i & (\tau q^D+\tau^{-1})q^{i-D+1}\\
	\hat{C}^-_{i+1} & 0\\
	\hat{C}^+_{i+1} & \tau(1-q^{i+1})
	\end{array}
\end{gather*}
\end{Lemma}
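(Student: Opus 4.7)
My plan is to compute both actions directly from the defining factorization $\mathcal{X}=\mathcal{T}'\mathcal{T}$ (and $\mathcal{X}^{-1}=\mathcal{T}^{-1}\mathcal{T}'^{-1}$) using the explicit block matrices afforded by Corollary \ref{H-mod}. The key observation is that the blocks of $\mathfrak{t}$ and $\mathfrak{t}'$ are ``interleaved'' in the basis $\mathcal{C}$ from \eqref{1st basis}: the block $t(i)$ of $\mathfrak{t}$ acts on the pair $\{\hat{C}^-_i,\hat{C}^+_i\}$, whereas the blocks of $\mathfrak{t}'$ act on $\{\hat{C}^-_0\}$ (via $t'(0)$), on $\{\hat{C}^+_{i-1},\hat{C}^-_i\}$ for $1\leqslant i\leqslant D-1$ (via $t'(i)$), and on $\{\hat{C}^+_{D-1}\}$ (via $t'(D)$). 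Thus $\mathcal{T}$ mixes $\pm$ at a fixed index, while $\mathcal{T}'$ mixes consecutive indices across the sign flip $+\leftrightarrow -$.

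For $\mathcal{X}.\hat{C}^+_i$ I would first read off $\mathcal{T}.\hat{C}^+_i$ from the second column of $t(i)$, obtaining simply $q^{e/2}\hat{C}^-_i$. The vector $\hat{C}^-_i$ then lies in the block governed by $t'(i)$ (or $t'(0)$ when $i=0$), so the second column of $t'(i)$ produces $\mathcal{T}'.\hat{C}^-_i$ as a linear combination of $\hat{C}^+_{i-1}$ and $\hat{C}^-_i$. Multiplying by $q^{e/2}$ and substituting $\tau=\sqrt{-1}q^{-(D+e)/2}$ from \eqref{dK parameter} should convert the powers of $q$ and factors of $\sqrt{-1}$ into the $\tau$-form tabulated in the lemma. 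The boundary cases $i=0$ and $i=D-1$ collapse automatically once we use the convention \eqref{empty in some cases} and the $1\times 1$ blocks $t'(0)$, $t'(D)$.

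For $\mathcal{X}^{-1}.\hat{C}^+_i$ I would invert the factors using the quadratic relations of Definition \ref{nil.H}: $(\mathcal{T}-\kappa)(\mathcal{T}+\kappa^{-1})=0$ gives $\mathcal{T}^{-1}=\mathcal{T}-(\kappa-\kappa^{-1})$ and likewise $\mathcal{T}'^{-1}=\mathcal{T}'-(\kappa'-\kappa'^{-1})$. Applying $\mathcal{T}'^{-1}$ to $\hat{C}^+_i$ (now sitting in the block $t'(i+1)$ for $0\leqslant i\leqslant D-2$, or $t'(D)$ for $i=D-1$) gives a combination of $\hat{C}^+_i$ and $\hat{C}^-_{i+1}$; then $\mathcal{T}^{-1}$ acts on each of these via the blocks $t(i)$ and $t(i+1)$. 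Collecting the resulting four terms and substituting the values $\kappa=q^{-e/2}$, $\kappa'=\sqrt{-1}q^{-D/2}$, $\tau=\sqrt{-1}q^{-(D+e)/2}$ from \eqref{kappa} and \eqref{dK parameter} should yield the stated coefficients; in particular the identities $\tau q^D+\tau^{-1}=\sqrt{-1}q^{(D-e)/2}(1-q^e)$ and $-\tau^{-1}q^{j}=\sqrt{-1}q^{(D+e)/2+j}$ will be convenient for the simplification.

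There is no conceptual obstacle here; the work is essentially bookkeeping. The main point requiring care is that the two matrices $\mathfrak{t}$ and $\mathfrak{t}'$ are block-diagonal with respect to \emph{different} partitions of $\mathcal{C}$, so one must be careful to apply the correct $2\times 2$ (or $1\times 1$) block when passing a vector of the form $\hat{C}^{\pm}_j$ through $\mathcal{T}'$ (or $\mathcal{T}'^{-1}$), and to treat the two endpoint blocks $t'(0)$, $t'(D)$ separately. The computation for $\hat{C}^+_i$ is strictly parallel to (and slightly simpler than) the one for $\hat{C}^-_i$ in Lemma \ref{Y.C-i}, so the same template applies.
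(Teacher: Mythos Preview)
Your approach is correct and is precisely what the paper has in mind: the paper states only that Lemmas \ref{Y.C-i}--\ref{B.C+-} ``are checked by straightforward calculations,'' and the calculation is exactly the one you describe, namely reading $\mathcal{X}=\mathcal{T}'\mathcal{T}$ off from the interleaved block structure of $\mathfrak{t}$ and $\mathfrak{t}'$ in the basis $\mathcal{C}$ and then converting to $\tau$-notation via \eqref{dK parameter}. Your identification of which block of $\mathfrak{t}'$ contains $\hat{C}^-_i$ versus $\hat{C}^+_i$, and your handling of the boundary blocks $t'(0)$, $t'(D)$ together with \eqref{empty in some cases}, are all accurate.
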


\begin{Lemma}\label{A.C+-}
For $0 \leqslant i \leqslant D-1$, the actions of $\mathcal{A}$ on $\hat{C}^{\pm}_i$ are given as linear combinations with the following terms and coefficients:
	\begin{gather*}\renewcommand\arraystretch{1.3}
	\mathcal{A}.\hat{C}^-_i\colon \
	\begin{array}{l | c}
	\text{\textup{term}} & \text{\textup{coef\/f\/icient}} \\
	\hline \hline
	\hat{C}^-_{i-1} & \tau^{-1}(1-q^{i-D})\\
	\hat{C}^+_{i-1} & 0\\
	\hat{C}^-_i & (\tau q^D+\tau^{-1})q^{i-D}\\
	\hat{C}^+_i & \tau q^i(1-q)\\
	\hat{C}^-_{i+1} & \tau(1-q^{i+1})
	\end{array}
	\qquad
	\mathcal{A}.\hat{C}^+_i\colon \
	\begin{array}{l | c}
	\text{\textup{term}} & \text{\textup{coef\/f\/icient}} \\
	\hline \hline
	\hat{C}^+_{i-1} & \tau^{-1}(1-q^{i-D})\\
	\hat{C}^-_i & \tau^{-1}q^{i-D}(q-1)\\
	\hat{C}^+_i & (\tau q^D+\tau^{-1})q^{i-D+1}\\
	\hat{C}^-_{i+1} & 0\\
	\hat{C}^+_{i+1} & \tau(1-q^{i+1})
	\end{array}
	\end{gather*}
\end{Lemma}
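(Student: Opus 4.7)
The plan is to use the defining identity $\mathcal{A}=\mathcal{X}+\mathcal{X}^{-1}$ directly, so that each expansion $\mathcal{A}.\hat{C}^{\pm}_i$ is literally the term-by-term sum of the two rows already produced in Lemmas \ref{Y.C-i} and \ref{Y.C+i}. Thus no new structural input is needed; the work is purely additive on the coefficient tables. The main thing worth flagging explicitly is that the two tables in Lemma \ref{A.C+-} list only five rows rather than the six one might naively expect, so the verification must check that a cancellation occurs in each case.

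More concretely, for $\mathcal{A}.\hat{C}^-_i$ I would walk through the five candidate terms $\hat{C}^-_{i-1},\hat{C}^+_{i-1},\hat{C}^-_i,\hat{C}^+_i,\hat{C}^-_{i+1}$. The term $\hat{C}^-_{i-1}$ appears only in $\mathcal{X}^{-1}.\hat{C}^-_i$ and the term $\hat{C}^-_{i+1}$ appears only in $\mathcal{X}.\hat{C}^-_i$, so those coefficients are carried over verbatim. For $\hat{C}^+_{i-1}$ the two contributions $(\tau q^D+\tau^{-1})(q^{i-D}-1)$ and $(\tau q^D+\tau^{-1})(1-q^{i-D})$ cancel, giving the $0$ coefficient that justifies the omission of $\hat{C}^+_{i-1}$ from the table; the coefficient of $\hat{C}^-_i$ is unchanged since $\mathcal{X}^{-1}.\hat{C}^-_i$ has a $0$ there; and the coefficient of $\hat{C}^+_i$ collapses as $\tau(q^D-q^{i+1}+1)-\tau(q^D-q^i+1)=\tau q^i(1-q)$.

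For $\mathcal{A}.\hat{C}^+_i$ the same recipe applies to $\hat{C}^+_{i-1},\hat{C}^-_i,\hat{C}^+_i,\hat{C}^-_{i+1},\hat{C}^+_{i+1}$. The outer terms $\hat{C}^+_{i-1}$ and $\hat{C}^+_{i+1}$ inherit their coefficients from a single side, $\hat{C}^+_i$ picks up only the $\mathcal{X}^{-1}$ contribution $(\tau q^D+\tau^{-1})q^{i-D+1}$, and $\hat{C}^-_i$ collapses as $-\tau^{-1}q^{i-D}+\tau^{-1}q^{i-D+1}=\tau^{-1}q^{i-D}(q-1)$, while both contributions to $\hat{C}^-_{i+1}$ are already $0$. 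There is essentially no obstacle; the only point where one has to be the slightest bit careful is confirming that the table entries at the boundary indices $i=0$ and $i=D-1$ are still correct once the conventions in \eqref{empty in some cases} are applied, but since those conventions set the absent summands to zero the boundary cases reduce to the same arithmetic.
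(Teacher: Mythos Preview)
Your proposal is correct and is exactly the intended argument: the paper merely states that Lemmas~\ref{Y.C-i}--\ref{B.C+-} are ``checked by straightforward calculations,'' and for Lemma~\ref{A.C+-} the straightforward calculation is precisely the term-by-term addition of the tables in Lemmas~\ref{Y.C-i} and~\ref{Y.C+i} that you carry out. One minor quibble: the tables in Lemma~\ref{A.C+-} do in fact list the rows with zero coefficient (e.g., $\hat{C}^+_{i-1}$ in the first table), so there is no ``omission'' to justify---but your cancellation check is still the substantive content, and your arithmetic is right.
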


\begin{Lemma}\label{B.C+-}For $0 \leqslant i \leqslant D-1$, the actions of $\mathcal{A}^*$, $\widetilde{\mathcal{A}}^*$ on $\hat{C}_i^{\pm}$ are given by
\begin{gather*}
	\mathcal{A}^*.\hat{C}^{-}_i = q^{-i}\hat{C}^-_i, \qquad
	\mathcal{A}^*.\hat{C}^{+}_i = q^{-i-1}\hat{C}^+_i, \\
	\widetilde{\mathcal{A}}^*.\hat{C}^{-}_i = q^{-i}\hat{C}^-_i, \qquad
	\widetilde{\mathcal{A}}^*.\hat{C}^{+}_i = q^{-i}\hat{C}^+_i.
\end{gather*}
\end{Lemma}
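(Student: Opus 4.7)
The plan is to compute the four actions directly from the matrix representation given in Proposition~\ref{representation} and Corollary~\ref{H-mod}, exploiting the block structure of the generators. Since $\mathcal{A}^*$ and $\widetilde{\mathcal{A}}^*$ are both $\mathbb{C}$-linear combinations of $\mathcal{T}\mathcal{U}'$ and $\mathcal{U}\mathcal{T}'$, it suffices to compute the matrices of $\mathcal{T}\mathcal{U}'$ and $\mathcal{U}\mathcal{T}'$ in the basis $\mathcal{C}$ and then assemble.

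The structural observation that makes everything clean is that $\mathfrak{t}$ and $\mathfrak{u}'$ share a common block partition of $\mathcal{C}$, namely the $D$ pairs $\{\hat{C}^-_i,\hat{C}^+_i\}$ for $0\leqslant i\leqslant D-1$, whereas $\mathfrak{u}$ and $\mathfrak{t}'$ share the offset partition consisting of the singletons $\{\hat{C}^-_0\}$, $\{\hat{C}^+_{D-1}\}$ together with the $D-1$ pairs $\{\hat{C}^+_{i-1},\hat{C}^-_i\}$ for $1\leqslant i\leqslant D-1$. Consequently both $\mathfrak{t}\mathfrak{u}'$ and $\mathfrak{u}\mathfrak{t}'$ are block-diagonal with respect to their respective partitions, and I can carry out the computation block by block.

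First I would multiply $t(i)u'(i)$ for $0\leqslant i\leqslant D-1$; because $u'(i)$ has a single non-zero column, the result is the rank-one matrix with only the $(1,1)$-entry nonzero, equal to $-\sqrt{-1}\,q^{D/2-i}$. Multiplying by $\sqrt{-1}\,q^{-D/2}$ yields that $\sqrt{-1}\,q^{-D/2}\,\mathcal{T}\mathcal{U}'$ sends $\hat{C}^-_i\mapsto q^{-i}\hat{C}^-_i$ and $\hat{C}^+_i\mapsto 0$. Next I would compute $u(i)t'(i)$: the singleton blocks give $u(0)t'(0)=0$ and $u(D)t'(D)=-\sqrt{-1}\,q^{-D/2}$, while for $1\leqslant i\leqslant D-1$ the row-vector structure of $u(i)$ reduces the computation to a single $1\times 2$ row, and a short telescoping of powers of $q$ using $(1-q^{D-i})(1-q^i)=1-q^i-q^{D-i}+q^D$ leaves only the $(1,1)$-entry $-\sqrt{-1}\,q^{D/2-i}$. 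Multiplying again by $\sqrt{-1}\,q^{-D/2}$, $\sqrt{-1}\,q^{-D/2}\,\mathcal{U}\mathcal{T}'$ sends $\hat{C}^+_{i-1}\mapsto q^{-i}\hat{C}^+_{i-1}$ and $\hat{C}^-_i\mapsto 0$ for $1\leqslant i\leqslant D-1$, with the two edge cases giving $\hat{C}^-_0\mapsto 0$ and $\hat{C}^+_{D-1}\mapsto q^{-D}\hat{C}^+_{D-1}$, consistent with the pattern.

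Finally I would add the two contributions to obtain $\mathcal{A}^*$, and take the combination with coefficient $q$ in front of $\mathcal{U}\mathcal{T}'$ to obtain $\widetilde{\mathcal{A}}^*$; the stated eigenvalues fall out. No conceptual difficulty arises — the only obstacle is bookkeeping the offset in the block structures and handling the two $1\times 1$ edge blocks of $\mathfrak{u}$ and $\mathfrak{t}'$ uniformly with the $2\times 2$ interior blocks, which the simultaneous block-diagonality observation above handles cleanly.
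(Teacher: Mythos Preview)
Your proof is correct and is exactly the kind of direct block-by-block computation the paper has in mind; the paper itself omits all details, stating only that Lemmas~\ref{Y.C-i}--\ref{B.C+-} ``are checked by straightforward calculations.'' Your observation that $\mathfrak{t},\mathfrak{u}'$ are simultaneously block-diagonal for the partition $\{\hat{C}^-_i,\hat{C}^+_i\}$ while $\mathfrak{u},\mathfrak{t}'$ are simultaneously block-diagonal for the offset partition is precisely the structural reason the computation stays clean, and your handling of the two $1\times 1$ edge blocks is correct.
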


Recall the generators $A$, $A^*$, and $\widetilde{A}^*$ of $\mathbf{T}$. We now present our f\/irst main result.

\begin{Theorem} \label{thm1}
On $\mathbf{W}$, we have
\begin{gather*}
	A = \alpha+\beta\tau\mathcal{A}, \qquad A^* = \alpha^*+\beta^*\mathcal{A}^*, \qquad \widetilde{A}^* = \widetilde{\alpha}^*+\widetilde{\beta}^*\widetilde{\mathcal{A}}^*,
\end{gather*}
where $\alpha$, $\alpha^*$, $\widetilde{\alpha}^*$, $\beta$, $\beta^*$, $\widetilde{\beta}^*$ are from Propositions~{\rm \ref{Phi}} and {\rm \ref{LS MC}}.
Moreover, $\mathbf{W}$ is an irreducible $\overline{\mathcal{H}}$-module.
\end{Theorem}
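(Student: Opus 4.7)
The plan is to verify the three identities by direct coefficient comparison on the orthogonal basis $\mathcal{C}$ of~$\mathbf{W}$, and then to deduce irreducibility from Proposition~\ref{W:irrT-mod}.

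For the identity $A = \alpha + \beta\tau\mathcal{A}$, I would compute both sides on each basis vector $\hat{C}_i^{\pm}$ by combining Lemma~\ref{actA} (for the left-hand side) with Lemma~\ref{A.C+-} (for $\mathcal{A}$). Using $\tau^2 = \beta^{-1}\gamma = -q^{-D-e}$ from \eqref{dK parameter}, each coefficient reduces to a one-line calculation. For example, on the $\hat{C}_{i-1}^-$-coefficient of $A.\hat{C}_i^-$, the formula predicts $\beta\tau \cdot \tau^{-1}(1-q^{i-D}) = \beta(1-q^{i-D}) = \frac{q^{D+e}-q^{i+e}}{q-1}$, matching Lemma~\ref{actA}. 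For the diagonal $\hat{C}_i^-$-coefficient, it predicts $\alpha + \beta(\tau^2 q^D+1)q^{i-D} = -\tfrac{q^e-1}{q-1} + \tfrac{q^{D+e}}{q-1}(1-q^{-e})q^{i-D} = \tfrac{(q^e-1)(q^i-1)}{q-1}$, again matching. The remaining coefficients, and the parallel check starting from $\hat{C}_i^+$, proceed identically.

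For the identities $A^* = \alpha^* + \beta^*\mathcal{A}^*$ and $\widetilde{A}^* = \widetilde{\alpha}^* + \widetilde{\beta}^*\widetilde{\mathcal{A}}^*$, I would compare Lemma~\ref{act.A*,A*t} with Lemma~\ref{B.C+-}. Since $\mathcal{A}^*$, $\widetilde{\mathcal{A}}^*$ act diagonally on $\mathcal{C}$, this amounts to checking that
\begin{gather*}
\theta_i^* = \alpha^* + \beta^* q^{-i}, \qquad \theta_{i+1}^* = \alpha^* + \beta^* q^{-i-1}, \qquad \widetilde{\theta}_i^* = \widetilde{\alpha}^* + \widetilde{\beta}^* q^{-i},
\end{gather*}
which are precisely the dual $q$-Krawtchouk parameterizations recorded in \eqref{dual-e-val} and \eqref{dual-e-val;tilde} together with the values of $(\alpha^*,\beta^*)$ and $(\widetilde{\alpha}^*,\widetilde{\beta}^*)$ from Propositions~\ref{Phi} and \ref{LS MC}. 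This step is mechanical.

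Having established the three identities, the irreducibility of $\mathbf{W}$ as an $\overline{\mathcal{H}}$-module is then immediate: the image of $\overline{\mathcal{H}}$ in $\operatorname{End}(\mathbf{W})$ (via Corollary~\ref{H-mod}) contains $\mathcal{A} = \mathcal{X}+\mathcal{X}^{-1}$, $\mathcal{A}^*$, and $\widetilde{\mathcal{A}}^*$, hence by the first part it also contains $A$, $A^*$, and $\widetilde{A}^*$, which generate $\mathbf{T}$. Therefore every $\overline{\mathcal{H}}$-submodule of $\mathbf{W}$ is a $\mathbf{T}$-submodule, and Proposition~\ref{W:irrT-mod} forces it to be $0$ or $\mathbf{W}$.

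The main (and only) real obstacle is the bookkeeping in the coefficient comparison, especially the sign and exponent juggling coming from $\tau^2 = -q^{-D-e}$ and the mixing between $\hat{C}^-_i$ and $\hat{C}^+_i$ terms in Lemma~\ref{A.C+-}; but all four basis-vector types (two from $A.\hat{C}_i^-$, two from $A.\hat{C}_i^+$, each producing up to four or five terms) reduce to the same kind of manipulation exhibited above, so no conceptual difficulty arises.
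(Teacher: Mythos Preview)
Your proposal is correct and follows essentially the same route as the paper: the three identities are obtained by comparing Lemmas~\ref{actA} and~\ref{act.A*,A*t} against Lemmas~\ref{A.C+-} and~\ref{B.C+-} on the basis $\mathcal{C}$ (invoking the parameter values from Propositions~\ref{Phi} and~\ref{LS MC}), and irreducibility is then deduced from Proposition~\ref{W:irrT-mod} via the fact that $A$, $A^*$, $\widetilde{A}^*$ generate $\mathbf{T}$. Your sample coefficient checks are accurate, and the paper's proof simply records this verification more tersely.
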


\begin{proof}The three identities follow from Lemmas~\ref{actA}, \ref{act.A*,A*t}, \ref{A.C+-}, \ref{B.C+-}, and Propositions~\ref{Phi},~\ref{LS MC}.
Since $\mathbf{W}$ is an irreducible $\mathbf{T}$-module by Proposition \ref{W:irrT-mod} and since $A$, $A^*$, and $\widetilde{A}^*$ generate $\mathbf{T}$, it follows from these identities that $\mathbf{W}$ is irreducible as an $\overline{\mathcal{H}}$-module.
\end{proof}

Recall the orthogonal projection $\pi$ (resp.~$\widetilde{\pi}$) from $\mathbf{W}$ onto $M\hat{x}$ (resp.~$M\hat{C}$). The following result illustrates (to some extent) how we arrived at the $\overline{\mathcal{H}}$-module structure on $\mathbf{W}$ given above:

\begin{Theorem}\label{thm2} On $\mathbf{W}$, we have
\begin{gather*}
	\pi = \frac{\mathcal{T}'+\kappa^{\prime-1}}{\kappa'+\kappa^{\prime-1}}, \qquad \widetilde{\pi} = \frac{\mathcal{T}+\kappa^{-1}}{\kappa+\kappa^{-1}}.
\end{gather*}
\end{Theorem}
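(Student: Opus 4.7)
The plan is to verify both identities by direct computation on the basis $\mathcal{C}$ from \eqref{1st basis}, using the block-diagonal matrices $\mathfrak{t},\mathfrak{t}'$ representing $\mathcal{T},\mathcal{T}'$ in $\mathcal{C}$ (by Corollary~\ref{H-mod}), and then matching against the explicit actions of $\pi$ and $\widetilde{\pi}$ recorded in \eqref{act.pi} and \eqref{act.pitilde}.

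For the second identity $\widetilde{\pi}=(\mathcal{T}+\kappa^{-1})/(\kappa+\kappa^{-1})$, note that $\kappa+\kappa^{-1}=q^{-e/2}+q^{e/2}\ne 0$, and that $\mathfrak{t}=\operatorname{blockdiag}(t(0),\dots,t(D-1))$ has identical $2\times 2$ blocks. A short simplification gives
\[
\frac{t(i)+\kappa^{-1}I}{\kappa+\kappa^{-1}}
=\frac{1}{1+q^e}\begin{pmatrix}1 & q^e\\ 1 & q^e\end{pmatrix},
\]
and reading off the two columns as the images of $\hat{C}^-_i$ and $\hat{C}^+_i$ recovers exactly the formulas for $\widetilde{\pi}\hat{C}^-_i$ and $\widetilde{\pi}\hat{C}^+_i$ in \eqref{act.pitilde}. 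Since this is uniform in $i$, the identity holds on all of $\mathbf{W}$.

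For the first identity $\pi=(\mathcal{T}'+\kappa^{\prime-1})/(\kappa'+\kappa^{\prime-1})$, observe that $\kappa'+\kappa^{\prime-1}=\sqrt{-1}(q^{-D/2}-q^{D/2})\ne 0$. The matrix $\mathfrak{t}'=\operatorname{blockdiag}(t'(0),t'(1),\dots,t'(D))$ has $1\times 1$ end blocks $t'(0)=t'(D)=\sqrt{-1}q^{-D/2}=\kappa'$, acting on $\hat{C}^-_0$ and $\hat{C}^+_{D-1}$ respectively, and $2\times 2$ interior blocks $t'(i)$, $1\le i\le D-1$, acting on $\{\hat{C}^+_{i-1},\hat{C}^-_i\}$. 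On each end block the right-hand side evaluates to $(\kappa'+\kappa^{\prime-1})/(\kappa'+\kappa^{\prime-1})=1$, which matches $\pi\hat{C}^-_0=\hat{C}^-_0$ and $\pi\hat{C}^+_{D-1}=\hat{C}^+_{D-1}$; for the latter one uses that since $\Gamma$ is a regular near $2D$-gon, $\Gamma_D(x)=C^+_{D-1}$, so $\hat{C}^+_{D-1}=A_D\hat{x}\in M\hat{x}$. For each interior block, combining the entries of $t'(i)$ with $\kappa^{\prime-1}=-\sqrt{-1}q^{D/2}$ and simplifying yields
\[
\frac{t'(i)+\kappa^{\prime-1}I}{\kappa'+\kappa^{\prime-1}}
=\begin{pmatrix}\dfrac{q^i-1}{q^D-1} & \dfrac{q^D-q^i}{q^D-1}\\[2mm] \dfrac{q^i-1}{q^D-1} & \dfrac{q^D-q^i}{q^D-1}\end{pmatrix},
\]
which agrees term by term with \eqref{act.pi}.

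There is no real conceptual obstacle: once the block structures of $\mathfrak{t}$ and $\mathfrak{t}'$ are laid out the verification is a short $2\times 2$ calculation. The only point that requires a little care is tracking which basis vectors each block of $\mathfrak{t}'$ acts on, and observing the numerical coincidences $t'(0)=t'(D)=\kappa'$ that make the endpoint cases trivial. More conceptually, since $\mathcal{T}$ and $\mathcal{T}'$ satisfy quadratic relations with distinct roots $\kappa,-\kappa^{-1}$ and $\kappa',-\kappa^{\prime-1}$, the stated expressions are precisely the spectral projectors onto the $\kappa$- and $\kappa'$-eigenspaces, so the theorem is the intrinsic statement that these eigenspaces coincide with $M\hat{C}$ and $M\hat{x}$ respectively; this motivates the precise form of the representation constructed in Proposition~\ref{representation}.
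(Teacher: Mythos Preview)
Your proof is correct and follows exactly the approach indicated by the paper's own one-line proof (``Use \eqref{act.pi}, \eqref{act.pitilde}''): you compare the block-diagonal matrices for $(\mathcal{T}+\kappa^{-1})/(\kappa+\kappa^{-1})$ and $(\mathcal{T}'+\kappa^{\prime-1})/(\kappa'+\kappa^{\prime-1})$ in the basis $\mathcal{C}$ against the explicit actions of $\widetilde{\pi}$ and $\pi$ recorded in \eqref{act.pitilde} and \eqref{act.pi} (together with the endpoint identities $\pi\hat{C}^-_0=\hat{C}^-_0$, $\pi\hat{C}^+_{D-1}=\hat{C}^+_{D-1}$). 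Your added remark interpreting the right-hand sides as the spectral projectors onto the $\kappa$- and $\kappa'$-eigenspaces of $\mathcal{T}$ and $\mathcal{T}'$ is a nice conceptual gloss that the paper does not make explicit.
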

\begin{proof}Use \eqref{act.pi}, \eqref{act.pitilde}.
\end{proof}

\begin{Remark}\label{Mazzocco's work 2} A direct computation shows that we also have $\mathfrak{u}'=q\mathfrak{u}\mathfrak{x}$ in Proposition~\ref{representation}, so that, by Theorem~\ref{thm1}, $\mathbf{W}$~is indeed an irreducible module for the algebra $\mathcal{H}_{\mathrm{III}}$ mentioned in Remark~\ref{Mazzocco's work 1}. The f\/inite-dimensional irreducible modules for $\mathcal{H}$ have been classif\/ied by Oblomkov and Stoica~\cite{OS2009JPAA} in the general case where the scalar $q$ is not a root of unity. See also~\cite{NT2017LAA}. It would be an interesting problem to specialize the classif\/ication to~$\overline{\mathcal{H}}$ and/or~$\mathcal{H}_{\mathrm{III}}$. We may remark that Mazzocco \cite[Section~3]{Mazzocco2014SIGMA} obtained (among other results) a faithful representation of~$\mathcal{H}_{\mathrm{III}}$ on~$\mathbb{C}[\eta,\eta^{-1}]$. She introduced the \emph{non-symmetric Al-Salam--Chihara polynomials} in proving the faithfulness of this representation, and the non-symmetric dual $q$-Krawtchouk polynomials~$\ell_i^{\pm}$ which we will discuss in Sections~\ref{S:nsdqKpoly} and~\ref{S:orth rels} are a discretization of these Laurent polynomials.
\end{Remark}

\section[Non-symmetric dual $q$-Krawtchouk polynomials]{Non-symmetric dual $\boldsymbol{q}$-Krawtchouk polynomials}\label{S:nsdqKpoly}

Recall the four Leonard systems $\Phi$, $\Phi^{\perp}$, $\widetilde{\Phi}$, and $\widetilde{\Phi}^{\perp}$, and their standard bases $\{A_i\hat{x}\}_{i=0}^D$, $\{u^{\perp}_i\}^{D-2}_{i=0}$, $\{\hat{C}_i\}^{D-1}_{i=0}$, and $\{\widetilde{u}^{\perp}_i\}^{D-1}_{i=0}$ from Propositions \ref{Phi}, \ref{para.seq}, \ref{LS MC}, and \ref{para.seq.tilde}, respectively. We consider the monic dual $q$-Krawtchouk (Laurent) polynomials from \eqref{monic symmetric Laurent dual q-Krawtchouk} attached to these Leonard systems.
More specif\/ically, with the notation~\eqref{hi in 3 parameters} we let
\begin{alignat*}{3}
& h_i(\eta)=h_i(\eta;\tau,D;q),\qquad && 0\leqslant i \leqslant D,& \\
& h_i^{\perp}(\eta)=h_i(\eta;\tau q,D-2;q),\qquad && 0\leqslant i \leqslant D-2,& \\
&\widetilde{h}_i(\eta)=h_i(\eta;\tau,D-1;q),\qquad && 0\leqslant i \leqslant D-1, & \\
& \widetilde{h}_i^{\perp}(\eta)=h_i(\eta;\tau q,D-1;q),\qquad && 0\leqslant i \leqslant D-1 &
\end{alignat*}
for $\Phi$, $\Phi^{\perp}$, $\widetilde{\Phi}$, and $\widetilde{\Phi}^{\perp}$, respectively, where the scalar $\tau$ is from \eqref{dK parameter}.
Indeed, we have $\beta^{-1}\gamma=\widetilde{\beta}^{-1}\widetilde{\gamma}=\tau^2$ and $(\beta^{\perp})^{-1}\gamma^{\perp}=(\widetilde{\beta}^{\perp})^{-1}\widetilde{\gamma}^{\perp}=(\tau q)^2$.
We moreover set
\begin{gather*}
	p^{\perp}(\eta) = \eta^{-1}(\eta-\tau)\big(\eta-\tau^{-1}q^{-D}\big), \\
	\widetilde{p}(\eta)= \eta^{-1}\big(\eta-\tau^{-1}q^{-D}\big), \\
	\widetilde{p}^{\perp}(\eta) = \eta^{-1}(\eta-\tau).
\end{gather*}
Note that $p^{\perp}$, $\widetilde{p}$, and $\widetilde{p}^{\perp}$ are monic. Using these Laurent polynomials, we now def\/ine
\begin{align*}
	\ell^+_i(\eta) &= \frac{h_{i+1}-p^{\perp}h^{\perp}_i}{\tau^{i+1}(1-q^D)(q;q)_i}, & \ell^-_i(\eta) &= \frac{q^D-q^i}{\tau^i(q^D-1)(q;q)_i}
			\left(
			h_i-\frac{1-q^i}{q^D-q^i} p^{\perp}h^{\perp}_{i-1}
			\right), \\
	\widetilde{\ell}^+_i(\eta) &= \frac{q^e\bigl(\widetilde{p}\,\widetilde{h}_i-\widetilde{p}^{\perp}\widetilde{h}^{\perp}_i\bigr)}{\tau^i(1+q^e)(q;q)_i}, & \widetilde{\ell}^-_i(\eta) &= \frac{\widetilde{p}\,\widetilde{h}_i+q^e\widetilde{p}^{\perp}\widetilde{h}^{\perp}_i}{\tau^i(1+q^e)(q;q)_i}
\end{align*}
for $0\leqslant i\leqslant D-1$, where $h^{\perp}_{-1}:=0$ and
\begin{gather*}
	h^{\perp}_{D-1}(\eta):=\prod_{n=1}^{D-1}\big(\eta+\eta^{-1}-\tau q^n-\tau^{-1} q^{-n}\big)=\eta^{1-D}\prod_{n=1}^{D-1}\big(\eta-\tau q^n\big)\big(\eta-\tau^{-1}q^{-n}\big).
\end{gather*}
Let
\begin{gather}\label{L}
	\mathfrak{L}=\sum_{i=-D}^{D-1}\!\!\mathbb{C}\,\eta^i\subset\mathbb{C}\big[\eta,\eta^{-1}\big].
\end{gather}
Observe that $\dim(\mathfrak{L})=2D$, and that the $\ell^{\pm}_i$ and the $\widetilde{\ell}^{\pm}_i$ all belong to $\mathfrak{L}$. Our def\/inition of these Laurent polynomials is explained by the following result (and its proof):

\begin{Proposition}\label{C;K.v0}
For $0 \leqslant i \leqslant D-1$, we have
\begin{gather}\label{actions of li+-}
	\hat{C}^+_i = \ell^+_i(\mathcal{X}).\hat{x}=\widetilde{\ell}^+_i(\mathcal{X}).\hat{x}, \qquad \hat{C}^-_i = \ell^-_i(\mathcal{X}).\hat{x}=\widetilde{\ell}^-_i(\mathcal{X}).\hat{x}.
\end{gather}
In particular, we have $\ell^+_i=\widetilde{\ell}^+_i$, $\ell^-_i=\widetilde{\ell}^-_i$ for $0\leqslant i\leqslant D-1$. Moreover, the $\ell^{\pm}_i$ form a basis for~$\mathfrak{L}$.
\end{Proposition}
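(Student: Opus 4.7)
The plan is to reduce the identities $\hat{C}^\pm_i = \ell^\pm_i(\mathcal{X}).\hat{x} = \widetilde{\ell}^\pm_i(\mathcal{X}).\hat{x}$ to the standard-basis formula \eqref{hi on std basis} applied separately to each of the four Leonard systems $\Phi$, $\Phi^\perp$, $\widetilde{\Phi}$, $\widetilde{\Phi}^\perp$ attached to the decompositions \eqref{ods;T} and \eqref{ods;Ttil}. Once each of $A_i\hat{x}$, $u^\perp_{i-1}$, $\hat{C}_i$, $\widetilde{u}^\perp_i$ is realised as a Laurent polynomial in $\mathcal{X}$ acting on $\hat{x}$, substituting into \eqref{C;v-com-1} and \eqref{C;v-com-2} should produce the two claimed expressions for $\hat{C}^\pm_i$.

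To bridge the four Leonard systems I would first establish three \emph{transition identities} that move $\hat{x}$ to the distinguished seeds of the systems living on $M\hat{x}^\perp$, $M\hat{C}$, and $M\hat{C}^\perp$. Reading off $\mathcal{X}^{\pm 1}.\hat{x}$ in the basis $\mathcal{C}$ from Lemmas~\ref{Y.C-i} and~\ref{Y.C+i} with $i=0$ (recall $\hat{C}^-_0=\hat{x}$), and using $\tau^2=-q^{-D-e}$ (which comes from \eqref{dK parameter} together with the values of $\beta,\gamma$ in Proposition~\ref{Phi}), a direct substitution should give
\begin{gather*}
p^\perp(\mathcal{X}).\hat{x}=q\tau\,u^\perp_0, \qquad \widetilde{p}(\mathcal{X}).\hat{x}=\hat{C}, \qquad \widetilde{p}^\perp(\mathcal{X}).\hat{x}=-q^{-e}\widetilde{u}^\perp_0.
\end{gather*}
Next, taking $\mathsf{u}=\hat{X}$ in \eqref{hi on std basis} for each Leonard system (so that $E^*_0\mathsf{u}=\hat{x}$, $\widetilde{E}^*_0\mathsf{u}=\hat{C}$, and the parameter $\mytau$ equals $\tau$ for $\Phi,\widetilde{\Phi}$ and $\tau q$ for $\Phi^\perp,\widetilde{\Phi}^\perp$) would yield
\begin{gather*}
h_i(\mathcal{X}).\hat{x}=\tau^i(q;q)_i A_i\hat{x}, \qquad \widetilde{h}_i(\mathcal{X}).\hat{C}=\tau^i(q;q)_i\hat{C}_i, \\
h^\perp_i(\mathcal{X}).u^\perp_0=(\tau q)^i(q;q)_i u^\perp_i, \qquad \widetilde{h}^\perp_i(\mathcal{X}).\widetilde{u}^\perp_0=(\tau q)^i(q;q)_i \widetilde{u}^\perp_i.
\end{gather*}

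Composing the transition identities with these four formulas would realise each of $A_i\hat{x}$, $u^\perp_{i-1}$, $\hat{C}_i$, $\widetilde{u}^\perp_i$ as a Laurent polynomial in $\mathcal{X}$ applied to $\hat{x}$, and plugging into \eqref{C;v-com-1} and \eqref{C;v-com-2} and collecting the $(q;q)_n$ factors should yield the stated formulas for $\ell^\pm_i(\mathcal{X}).\hat{x}$ and $\widetilde{\ell}^\pm_i(\mathcal{X}).\hat{x}$. I expect the subtlest point to be the case $i=D-1$ of $\ell^+_{D-1}$, which is not covered by \eqref{C;v-com-1} and which is where the ad hoc definition of $h^\perp_{D-1}$ enters. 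Here I would use $\Gamma_D(x)=C^+_{D-1}$ to obtain $\hat{C}^+_{D-1}=A_D\hat{x}$, and then show that $h^\perp_{D-1}(\mathcal{X})$ annihilates $M\hat{x}^\perp$ so that the $p^\perp h^\perp_{D-1}$ term drops out; indeed, by Proposition~\ref{para.seq} together with $A=\alpha+\beta\tau(\mathcal{X}+\mathcal{X}^{-1})$, the eigenvalues of $\mathcal{X}+\mathcal{X}^{-1}$ on $M\hat{x}^\perp$ are precisely $\tau q^n+\tau^{-1}q^{-n}$ for $1\leqslant n\leqslant D-1$, which match exactly the zeros of $h^\perp_{D-1}$ regarded as a polynomial in $\eta+\eta^{-1}$.

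For the final assertion, once we know $\ell^\pm_i(\mathcal{X}).\hat{x}=\widetilde{\ell}^\pm_i(\mathcal{X}).\hat{x}=\hat{C}^\pm_i$, the fact that the $\hat{C}^\pm_i$ form the basis $\mathcal{C}$ of $\mathbf{W}$ together with $\dim\mathfrak{L}=2D=\dim\mathbf{W}$ shows that the linear map $\mathfrak{L}\to\mathbf{W}$, $f\mapsto f(\mathcal{X}).\hat{x}$, is surjective and hence a bijection. The $\ell^\pm_i$ therefore form a basis for $\mathfrak{L}$ (being sent to the basis $\mathcal{C}$), and the injectivity of this map finally forces $\ell^\pm_i=\widetilde{\ell}^\pm_i$ inside $\mathfrak{L}$.
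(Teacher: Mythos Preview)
Your proposal is correct and follows essentially the same route as the paper's proof: the same three transition identities for $p^\perp(\mathcal{X}).\hat{x}$, $\widetilde{p}(\mathcal{X}).\hat{x}$, $\widetilde{p}^\perp(\mathcal{X}).\hat{x}$, the same application of \eqref{hi on std basis} to the four Leonard systems, the same separate treatment of $\ell^+_{D-1}$ via the vanishing of $h^\perp_{D-1}(\mathcal{X})$ on $M\hat{x}^\perp$, and the same dimension-count argument for the basis claim and the equality $\ell^\pm_i=\widetilde{\ell}^\pm_i$. The only slip is expository: writing ``$\mathsf{u}=\hat{X}$'' for all four systems is not literally correct for $\Phi^\perp$ and $\widetilde{\Phi}^\perp$ (there one takes $\mathsf{u}=w$ and $\mathsf{u}=\widetilde{w}$), but the formulas you actually use are the right ones.
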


\begin{proof}
Setting $i=0$ in Lemma \ref{Y.C-i}, we have
\begin{gather*}
	\mathcal{X}.\hat{x} = \big(\tau+\tau^{-1}q^{-D}\big)\hat{x} + \tau\big(q^D-q+1\big)\hat{C}^+_0 + \tau(1-q)\hat{C}^-_1, \qquad	\mathcal{X}^{-1}.\hat{x} = -\tau q^D\hat{C}^+_0.
\end{gather*}
By these identities, \eqref{ui.perp}, and \eqref{ui.perp.tilde}, we have
\begin{gather}\label{0th std vectors}
	u_0^{\perp}=\tau^{-1}q^{-1}p^{\perp}(\mathcal{X}).\hat{x}, \qquad \hat{C}_0=\widetilde{p}(\mathcal{X}).\hat{x}, \qquad \widetilde{u}_0^{\perp}=-q^e\,\widetilde{p}^{\perp}(\mathcal{X}).\hat{x}.
\end{gather}

We f\/irst show that $\hat{C}^+_i = \ell^+_i(\mathcal{X}).\hat{x}$. By virtue of Propositions~\ref{Phi},~\ref{para.seq}, and Theorem~\ref{thm1}, we may apply the discussions in the last paragraph of Section~\ref{S:pre LS} to the Leonard systems~$\Phi$,~$\Phi^{\perp}$. Assume that $0\leqslant i\leqslant D-2$. Then, from~\eqref{hi on std basis} and~\eqref{C;v-com-1} it follows that
\begin{gather*}
	\hat{C}^+_i=\frac{q^{i+1}-1}{q^D-1}\frac{h_{i+1}(\mathcal{X})}{\tau^{i+1}(q;q)_{i+1}}.\hat{x} + \frac{q^{i+1}}{q^D-1}\frac{h^{\perp}_i(\mathcal{X})}{\tau^iq^i(q;q)_i}.u^{\perp}_0,
\end{gather*}
which, together with \eqref{0th std vectors}, gives $\hat{C}^+_i = \ell^+_i(\mathcal{X}).\hat{x}$. When $i=D-1$, by \eqref{hi on std basis} we have
\begin{gather*}
	\hat{C}_{D-1}^+=A_D\hat{x}=\frac{h_D(\mathcal{X})}{\tau^D(q;q)_D}.\hat{x}.
\end{gather*}
On the other hand, from Proposition \ref{Phi} and Theorem \ref{thm1}, it follows that
\begin{gather*}
	h_{D-1}^{\perp}(\mathcal{X})=\beta^{1-D}\tau^{1-D}\prod_{n=1}^{D-1}(A-\theta_n)
\end{gather*}
on $\mathbf{W}$.
Hence it follows from Proposition \ref{para.seq} that $h_{D-1}^{\perp}(\mathcal{X})$ vanishes on $M\hat{x}^{\perp}$, so that by \eqref{0th std vectors} we have in particular
\begin{gather}\label{vanishing}
	h_{D-1}^{\perp}(\mathcal{X})p^{\perp}(\mathcal{X}).\hat{x}=\tau q\,h_{D-1}^{\perp}(\mathcal{X}).u^{\perp}_0=0.
\end{gather}
Combining these comments, we obtain $\hat{C}^+_{D-1} = \ell^+_{D-1}(\mathcal{X}).\hat{x}$. The other identities in~\eqref{actions of li+-} are similarly (and more easily) proved, using also Propositions~\ref{LS MC},~\ref{para.seq.tilde}, and~\eqref{C;v-com-2}.

Finally, from \eqref{actions of li+-} it follows that the $\ell^{\pm}_i$ and the $\widetilde{\ell}^{\pm}_i$ form two bases for $\mathfrak{L}$, but then~\eqref{actions of li+-} again shows that we must have $\ell^+_i=\widetilde{\ell}^+_i$, $\ell^-_i=\widetilde{\ell}^-_i$ for $0\leqslant i\leqslant D-1$.
This completes the proof.
\end{proof}

\begin{Definition}\label{nDK}We call the Laurent polynomials $\ell^{\pm}_i=\widetilde{\ell}^{\pm}_i$, $0\leqslant i\leqslant D-1$, the \emph{non-symmetric dual $q$-Krawtchouk polynomials}.
\end{Definition}

\begin{Remark}
We have given two expressions for the non-symmetric dual $q$-Krawtchouk polynomials. The f\/irst one, i.e., the $\ell^{\pm}_i$, is a specialization of the expressions for the non-symmetric $q$-Racah polynomials\footnote{Strictly speaking, this is true except for $\ell^+_{D-1}$, in which case we adjusted it so that it has no term of degree~$D$. A similar adjustment should also be possible for the non-symmetric $q$-Racah polynomials.} in \cite[Proposition~7.8]{JHL2} and for the non-symmetric Askey--Wilson polynomials in \cite[Section~4]{KB2011AA} (but with a dif\/ferent normalization; cf.~\cite[Section~11]{JHL2}). It is a natural guess that the polynomials studied in~\cite{KB2011AA, JHL2} also have expressions akin to the second one, i.e., the $\widetilde{\ell}^{\pm}_i$.
\end{Remark}

We next show that the $\ell_i^{\pm}$ satisfy recurrence relations with at most four terms.
As a by-product of the proof of Proposition \ref{C;K.v0} it follows that

\begin{Lemma}\label{giving minimal polynomial}
On $\mathbf{W}$, we have
\begin{gather*}
	p^{\perp}(\mathcal{X})h_{D-1}^{\perp}(\mathcal{X})=0.
\end{gather*}
\end{Lemma}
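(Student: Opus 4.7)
The plan is to exploit the cyclicity of $\mathbf{W}$ as a module over the commutative subalgebra $\mathbb{C}[\mathcal{X}, \mathcal{X}^{-1}]$ of $\overline{\mathcal{H}}$, with cyclic vector $\hat{x}$. This cyclicity is immediate from Proposition~\ref{C;K.v0}: the ordered basis $\mathcal{C}$ of $\mathbf{W}$ consists of the $2D$ vectors $\hat{C}^{\pm}_i = \ell^{\pm}_i(\mathcal{X})\hat{x}$, so $\mathbf{W} = \mathbb{C}[\mathcal{X}, \mathcal{X}^{-1}]\hat{x}$.

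Once cyclicity is in hand, any Laurent polynomial $f \in \mathbb{C}[\eta, \eta^{-1}]$ for which $f(\mathcal{X})\hat{x} = 0$ automatically satisfies $f(\mathcal{X}) = 0$ on all of $\mathbf{W}$: for an arbitrary $v = g(\mathcal{X})\hat{x} \in \mathbf{W}$, one has $f(\mathcal{X})v = g(\mathcal{X})f(\mathcal{X})\hat{x} = 0$, using that the two operators in $\mathbb{C}[\mathcal{X}, \mathcal{X}^{-1}]$ commute.

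It therefore suffices to verify that $p^{\perp}(\mathcal{X}) h_{D-1}^{\perp}(\mathcal{X}) \hat{x} = 0$. Swapping the factors by commutativity, this is precisely the statement \eqref{vanishing} established midway through the proof of Proposition~\ref{C;K.v0}: one applies $p^{\perp}(\mathcal{X})$ first to obtain $\tau q\, u_0^{\perp} \in M\hat{x}^{\perp}$ by \eqref{0th std vectors}, and then applies $h_{D-1}^{\perp}(\mathcal{X}) = (\beta\tau)^{1-D}\prod_{n=1}^{D-1}(A - \theta_n)$, which vanishes on $M\hat{x}^{\perp}$ because $A$ acts there with spectrum $\{\theta_1, \ldots, \theta_{D-1}\}$ by Proposition~\ref{para.seq}.

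There is no substantive obstacle: the entire content of the lemma is repackaging \eqref{vanishing} via the cyclic-module remark, and the reason the author flags it as a separate statement is presumably to isolate the fact that the product $p^{\perp}(\eta) h_{D-1}^{\perp}(\eta)$ is an annihilating Laurent polynomial for $\mathcal{X}$ on $\mathbf{W}$ (information of independent use in the orthogonality analysis of Section~\ref{S:orth rels}).
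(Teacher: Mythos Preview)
Your proposal is correct and essentially identical to the paper's own proof. The paper writes the argument out explicitly on the basis vectors, computing $p^{\perp}(\mathcal{X})h_{D-1}^{\perp}(\mathcal{X}).\hat{C}_i^{\pm}=\ell_i^{\pm}(\mathcal{X})h_{D-1}^{\perp}(\mathcal{X})p^{\perp}(\mathcal{X}).\hat{x}=0$ via Proposition~\ref{C;K.v0} and~\eqref{vanishing}, while you phrase the same commutativity step through the language of cyclic modules; the content is the same.
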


\begin{proof}
From Proposition \ref{C;K.v0} and \eqref{vanishing} it follows that
\begin{gather*}
	p^{\perp}(\mathcal{X})h_{D-1}^{\perp}(\mathcal{X}).\hat{C}_i^{\pm}=\ell_i^{\pm}(\mathcal{X})h_{D-1}^{\perp}(\mathcal{X})p^{\perp}(\mathcal{X}).\hat{x}=0
\end{gather*}
for $0\leqslant i\leqslant D-1$.
\end{proof}

\noindent
For convenience, let
\begin{gather*}
	\ell_{-1}^{\pm}(\eta):=0, \qquad \ell_D^+(\eta):=-\frac{\eta^{-1}p^{\perp}h_{D-1}^{\perp}}{\tau^{D+1}(q;q)_D}, \qquad \ell_D^-(\eta):=\frac{p^{\perp}h_{D-1}^{\perp}}{\tau^D(q;q)_D}.
\end{gather*}
Observe that
\begin{gather}\label{mod}
	\eta^{-1}p^{\perp}h_{D-1}^{\perp}\equiv q^{-D}\eta^{-D-1} \ \ (\mathrm{mod}\, \mathfrak{L}), \qquad p^{\perp}h_{D-1}^{\perp}\equiv \eta^D \ \ (\mathrm{mod}\, \mathfrak{L}).
\end{gather}

\begin{Theorem}\label{thm4}
The following $(i)$, $(ii)$ hold:
\begin{enumerate}\itemsep=0pt
\item[$(i)$] For $0 \leqslant i \leqslant D-1$, $\eta^{\pm 1}\ell_i^-$ are linear combinations with the following terms and coefficients:
\begin{gather*} \renewcommand\arraystretch{1.3}
	\eta\ell_i^-\colon \
	\begin{array}{l | c}
	\text{\textup{term}} & \text{\textup{coef\/f\/icient}} \\
	\hline \hline
	\ell^+_{i-1} & (\tau q^D+\tau^{-1})(q^{i-D}-1)\\
	\ell^-_i & (\tau q^D+\tau^{-1})q^{i-D}\\
	\ell^+_i & \tau(q^D-q^{i+1}+1)\\
	\ell^-_{i+1} & \tau(1-q^{i+1})
	\end{array}
	\qquad
	\eta^{-1}\ell_i^-\colon \
	\begin{array}{l | c}
	\text{\textup{term}} & \text{\textup{coef\/f\/icient}} \\
	\hline \hline
	\ell^-_{i-1} & \tau^{-1}(1-q^{i-D}) \\
	\ell^+_{i-1} & (\tau q^D+\tau^{-1})(1-q^{i-D})\\
	\ell^-_i & 0\\
	\ell^+_i & -\tau(q^D-q^i+1)
	\end{array}
\end{gather*}
\item[$(ii)$] For $0 \leqslant i \leqslant D-1$, $\eta^{\pm 1}\ell_i^+$ are linear combinations with the following terms and coefficients:
\begin{gather*}
	\renewcommand\arraystretch{1.3}
	\eta\ell_i^+\colon \	
	\begin{array}{l | c}
	\text{\textup{term}} & \text{\textup{coef\/f\/icient}} \\
	\hline \hline
	\ell^+_{i-1} & \tau^{-1}(1-q^{i-D})\\
	\ell^-_i & -\tau^{-1}q^{i-D}\\
	\ell^+_i & 0\\
	\ell^-_{i+1} & 0
	\end{array}
	\qquad
	\eta^{-1}\ell_i^+\colon \
	\begin{array}{l | c}
	\text{\textup{term}} & \text{\textup{coef\/f\/icient}} \\
	\hline \hline
	\ell^-_i & \tau^{-1}q^{i-D+1} \\
	\ell^+_i & (\tau q^D+\tau^{-1})q^{i-D+1}\\
	\ell^-_{i+1} & 0\\
	\ell^+_{i+1} & \tau(1-q^{i+1})
	\end{array}
\end{gather*}
\end{enumerate}
\end{Theorem}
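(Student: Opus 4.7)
The plan is to transplant each of the four identities in the theorem from the Laurent-polynomial space to $\mathbf{W}$ via the evaluation map $\phi\colon\mathbb{C}[\eta,\eta^{-1}]\to\mathbf{W}$, $\phi(g):=g(\mathcal{X}).\hat{x}$. By Proposition \ref{C;K.v0}, $\phi$ sends $\ell^{\pm}_i$ to $\hat{C}^{\pm}_i$ for $0\leqslant i\leqslant D-1$, so $\phi|_{\mathfrak{L}}\colon\mathfrak{L}\to\mathbf{W}$ is a linear isomorphism. Together with Lemma \ref{giving minimal polynomial} and the fact that $p^{\perp}h^{\perp}_{D-1}$ is a Laurent polynomial with nonzero coefficients at both $\eta^D$ and $\eta^{-D}$, a short reduction argument yields $\ker\phi=p^{\perp}h^{\perp}_{D-1}\cdot\mathbb{C}[\eta,\eta^{-1}]$.

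For each of the four identities, applying $\phi$ to both sides produces an equality in $\mathbf{W}$: the left-hand side maps to $\mathcal{X}^{\pm 1}.\hat{C}^{\pm}_i$, while the right-hand side maps to the same linear combination of $\hat{C}^{\pm}_j$'s, using that $\phi(\ell^{\pm}_D)=0$ by Lemma \ref{giving minimal polynomial} and that $\hat{C}^{\pm}_D=0$ by the convention \eqref{empty in some cases}. This equality is exactly the content of Lemma \ref{Y.C-i} (for the $\ell^-$-rows) and Lemma \ref{Y.C+i} (for the $\ell^+$-rows). Hence LHS$\,-\,$RHS lies in $\ker\phi$ for each identity.

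To upgrade this to an identity of Laurent polynomials, I would write LHS$\,-\,$RHS$\,=g\cdot p^{\perp}h^{\perp}_{D-1}$ and force $g=0$ by a degree argument. For $0\leqslant i\leqslant D-2$, both LHS and RHS already lie in $\mathfrak{L}$, so LHS$\,-\,$RHS$\,\in\mathfrak{L}\cap\ker\phi=\{0\}$. At $i=D-1$ the degree ranges are pinched tightly enough that the prescribed $\ell^{\pm}_D$ entries (or their absence) suffice: in the two cases where $\ell^{\pm}_D$ appears, one checks via \eqref{mod} together with $(q;q)_D=(1-q^D)(q;q)_{D-1}$ that the $\eta^D$-coefficient of $\eta\ell^-_{D-1}$ matches that of $\tau(1-q^D)\ell^-_D$ (both equal to $1/(\tau^{D-1}(q;q)_{D-1})$), and similarly that the $\eta^{-D-1}$-coefficient of $\eta^{-1}\ell^+_{D-1}$ matches that of $\tau(1-q^D)\ell^+_D$ (both equal to $(1-q^{-D})/(\tau^D(q;q)_D)$), so LHS$\,-\,$RHS lies in $\mathfrak{L}$ and vanishes by injectivity of $\phi|_{\mathfrak{L}}$; in the remaining two cases ($\eta^{-1}\ell^-_{D-1}$ and $\eta\ell^+_{D-1}$), the degree range of LHS$\,-\,$RHS is strictly narrower than that of $p^{\perp}h^{\perp}_{D-1}$, which is incompatible with any nonzero $g$.

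There is no real conceptual obstacle; the theorem is essentially Lemmas \ref{Y.C-i}--\ref{Y.C+i} relabelled in the polynomial language via Proposition \ref{C;K.v0}. The only genuine calculation is the boundary-degree bookkeeping at $i=D-1$, which reduces to the two short coefficient comparisons described above.
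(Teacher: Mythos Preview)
Your argument is correct and matches the paper's proof essentially line for line: both pull back Lemmas~\ref{Y.C-i} and~\ref{Y.C+i} through the isomorphism of Proposition~\ref{C;K.v0}, dispose of the generic indices by noting that both sides already lie in $\mathfrak{L}$, and then handle the two boundary cases $\eta\ell_{D-1}^-$ and $\eta^{-1}\ell_{D-1}^+$ by the single-term congruence modulo~$\mathfrak{L}$ recorded in~\eqref{mod}. The only cosmetic difference is that the paper first reads off the degree ranges of $\ell_i^{\pm}$ from the two expressions $\ell_i^{\pm}=\widetilde{\ell}_i^{\pm}$ to justify membership in $\mathfrak{L}$, whereas you leave this implicit; you may want to add one sentence making that degree check explicit.
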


\begin{proof}
Looking at the two expressions $\ell_i^{\pm}=\widetilde{\ell}_i^{\pm}$ it follows that $\ell_i^+$ has highest degree (at most) $i-1$ and lowest degree $-i-1$, whereas $\ell_i^-$ has highest degree $i$ and lowest degree $-i$.
Hence, except for $\eta^{-1}\ell_{D-1}^+$, $\eta\ell_{D-1}^-$ we have $\eta^{\pm 1}\ell_i^{\pm}\in\mathfrak{L}$, and the result follows from Lemmas \ref{Y.C-i}, \ref{Y.C+i} and Proposition \ref{C;K.v0}.
For the remaining two cases, using \eqref{mod} we routinely f\/ind that
\begin{gather*}
	\eta^{-1}\ell_{D-1}^+ \equiv \tau\big(1-q^D\big)\ell_D^+ \ \ (\mathrm{mod}\, \mathfrak{L}), \qquad \eta\ell_{D-1}^- \equiv \tau\big(1-q^D\big)\ell_D^- \ \ (\mathrm{mod}\, \mathfrak{L}).
\end{gather*}
Since $\ell_D^{\pm}(\mathcal{X})$ vanish on $\mathbf{W}$ by virtue of Lemma \ref{giving minimal polynomial}, the result again follows from Lem\-mas~\ref{Y.C-i},~\ref{Y.C+i} and Proposition \ref{C;K.v0}.
\end{proof}

\section{Orthogonality relations}\label{S:orth rels}

Recall the subspace $\mathfrak{L}$ of $\mathbb{C}[\eta,\eta^{-1}]$ from \eqref{L}.
In this section, we def\/ine a Hermitian inner product on $\mathfrak{L}$ and show that the non-symmetric dual $q$-Krawtchouk polynomials $\ell^{\pm}_i$ are orthogonal with respect to that inner product.

From Proposition \ref{C;K.v0} and Lemma \ref{giving minimal polynomial} it follows that the minimal polynomial of $\mathcal{X}$ on $\mathbf{W}$ has degree $2D$ and is given by $\eta^Dp^{\perp}h_{D-1}^{\perp}$, which has the following $2D$ simple zeros:
\begin{gather}\label{lambda's}
	\lambda_i:=\tau q^i, \qquad 0\leqslant i \leqslant D-1, \qquad \lambda_{-i}:=\tau^{-1}q^{-i}, \qquad 1\leqslant i\leqslant D.
\end{gather}
In particular, $\mathcal{X}$ is multiplicity-free on $\mathbf{W}$ with the above eigenvalues, and is therefore diagona\-li\-zable on $\mathbf{W}$.
Our aim is to explicitly describe eigenvectors of $\mathcal{X}$ on $\mathbf{W}$. To this end, recall the Leonard systems $\Phi$, $\Phi^{\perp}$ on $M\hat{x}$, $M\hat{x}^{\perp}$ from Propositions~\ref{Phi} and~\ref{para.seq}, respectively, and observe that $\{E_i\hat{x}\}^D_{i=0}$, $\{E_iu^{\perp}_0\}^{D-1}_{i=1}$ form $\Phi^*$- and $(\Phi^{\perp})^*$-standard bases for $M\hat{x}$, $M\hat{x}^{\perp}$, respectively; cf.~\eqref{relatives}. We will work with the following ordered orthogonal basis for $\mathbf{W}$:
\begin{gather*}
	\mathcal{B} = \big\{E_0\hat{x}, E_1\hat{x}, E_1u^{\perp}_0, E_2\hat{x}, E_2u^{\perp}_0,
		\dots, E_{D-1}\hat{x}, E_{D-1}u^{\perp}_0, E_D\hat{x}\big\}.
\end{gather*}

Recall the orthogonal projection $\pi$ (resp.~$\widetilde{\pi}$) from $\mathbf{W}$ onto $M\hat{x}$ (resp.~$M\hat{C}$).

\begin{Lemma}\label{pi, pi.tilde wrt B}
The following $(i)$, $(ii)$ hold:
\begin{enumerate}\itemsep=0pt
\item[$(i)$] The matrix representing $\pi$ in $\mathcal{B}$ is
\begin{gather*}
	\operatorname{blockdiag}\bigl(\pi(0),\pi(1),\dots,\pi(D-1),\pi(D)\bigr),
\end{gather*}
where $\pi(0)=\pi(D)=(1)$, and $\pi(i)=\operatorname{diag}(1,0)$ for $1\leqslant i\leqslant D-1$.
\item[$(ii)$] The matrix representing $\widetilde{\pi}$ in $\mathcal{B}$ is
\begin{gather*}
	\operatorname{blockdiag}\bigl(\widetilde{\pi}(0),\widetilde{\pi}(1),\dots,\widetilde{\pi}(D-1),\widetilde{\pi}(D)\bigr),
\end{gather*}
where $\widetilde{\pi}(0)=(1)$, $\widetilde{\pi}(D)=(0)$, and for $1\leqslant i\leqslant D-1$,
\begin{gather*}
	\widetilde{\pi}(i)=\frac{1}{{(q^D-1)(1+q^e)}}\\
\hphantom{\widetilde{\pi}(i)=}{}\times \begin{pmatrix} (q^e+q^i)(q^{D-i}-1) & q^{-1}(1+q^{D+e-i})(q^i-1)(q^e+q^i)(q^{D-i}-1) \\ q & (1+q^{D+e-i})(q^i-1) \end{pmatrix}.
\end{gather*}
\end{enumerate}
\end{Lemma}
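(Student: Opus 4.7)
My plan rests on the observation that both $\pi$ and $\widetilde{\pi}$ lie in the commutant of the Bose--Mesner algebra~$M$: the subspaces $M\hat{x}$, $M\hat{x}^{\perp}$, $M\hat{C}$, $M\hat{C}^{\perp}$ are all $M$-invariant (the unprimed ones tautologically, the complements because $M$ is closed under conjugate-transposition). Hence both projections commute with every $E_i$, so their matrices in~$\mathcal{B}$ are automatically block-diagonal, with a $1\times 1$ block at $i=0$ and $i=D$ and a $2\times 2$ block at each intermediate~$i$.

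Part~(i) is then immediate: within a middle block, $E_i\hat{x}\in M\hat{x}$ is fixed by $\pi$ and $E_iu_0^{\perp}\in M\hat{x}^{\perp}$ is killed by $\pi$. The extremal blocks both equal $(1)$ because $E_0\mathbf{W}=\mathbb{C}\hat{X}\subset M\hat{x}$, and since the Leonard system $\Phi^{\perp}$ uses primitive idempotents $E_1,\dots,E_{D-1}$ we have $E_DM\hat{x}^{\perp}=0$, forcing $E_D\mathbf{W}=\mathbb{C}E_D\hat{x}\subset M\hat{x}$.

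For part~(ii), the block at $i=0$ equals $(1)$ since $\hat{X}=\sum_j\hat{C}_j\in M\hat{C}$, and the block at $i=D$ equals $(0)$ since the Hoffman-bound argument from the proof of Proposition~\ref{LS MC} gives $E_D\hat{C}=0$, hence $E_DM\hat{C}=ME_D\hat{C}=0$ and $E_D\mathbf{W}\subset M\hat{C}^{\perp}$. The substance lies in the middle blocks. Using $\hat{x}=\hat{C}_0^-$ together with~\eqref{ui.perp}, a direct application of~\eqref{act.pitilde} gives $\widetilde{\pi}\hat{x}=(1+q^e)^{-1}\hat{C}$ and expresses $\widetilde{\pi}u_0^{\perp}$ as an explicit linear combination of $\hat{C}_0=\hat{C}$ and $\hat{C}_1$; substituting $\hat{C}_1=A\hat{C}-q^e\hat{C}$ from~\eqref{A_iC} and applying~$E_i$ (using $E_iA=\theta_iE_i$) realizes $\widetilde{\pi}E_i\hat{x}$ and $\widetilde{\pi}E_iu_0^{\perp}$ as explicit scalar multiples of $E_i\hat{C}$.

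Finally, to read off the columns of $\widetilde{\pi}(i)$ I would express $E_i\hat{C}$ back in the basis $\{E_i\hat{x},\,E_iu_0^{\perp}\}$ via $\hat{C}=\hat{x}+\hat{C}_0^+$ together with~\eqref{C;v-com-1} at index $1$, obtaining
\[
E_i\hat{C}=\frac{(q^e+q^i)(q^{D-i}-1)}{q^D-1}\,E_i\hat{x}+\frac{q}{q^D-1}\,E_iu_0^{\perp}
\]
after simplifying $1+(q-1)\theta_i/(q^D-1)$ via~\eqref{e-val}. Substituting and using the factorization $q^e(q^D-1)-(q-1)\theta_i=(q^i-1)(1+q^{D+e-i})$ reproduces the four entries of $\widetilde{\pi}(i)$ as claimed. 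The only real obstacle is bookkeeping these $q$-power identities, all of which ultimately reduce to repeated applications of $q^a-q^b=q^b(q^{a-b}-1)$.
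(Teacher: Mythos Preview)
Your argument is correct. For part~(i) it coincides with the paper's one-line proof. For part~(ii) you take a somewhat different route from the paper, and it is worth recording the contrast. The paper works entirely through a change of basis: it writes both $E_i\hat{C}_0$ and $E_i\widetilde{u}_0^{\perp}$ in terms of $E_i\hat{x}$, $E_iu_0^{\perp}$ (using \eqref{C;v-com-1} and \eqref{ui.perp.tilde}), inverts that $2\times 2$ system, applies $\widetilde{\pi}$ by killing the $E_i\widetilde{u}_0^{\perp}$ component, and converts back. You instead begin with the conceptual observation that $\widetilde{\pi}$ commutes with $M$ on $\mathbf{W}$, which yields the block structure at once, and then feed $\hat{x}=\hat{C}_0^-$ and $u_0^{\perp}=(q^{D-1}-1)\hat{C}_0^+ + (q^{-1}-1)\hat{C}_1^-$ directly into the already-established formula~\eqref{act.pitilde}; after rewriting $\hat{C}_1=(A-q^e)\hat{C}_0$ and hitting with $E_i$, both images become scalar multiples of $E_i\hat{C}_0$, and only that single vector needs to be expressed back in $\{E_i\hat{x},E_iu_0^{\perp}\}$. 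Your path avoids introducing $\widetilde{u}_0^{\perp}$ altogether and skips the $2\times 2$ inversion, at the cost of one extra $q$-identity (your factorization $q^e(q^D-1)-(q-1)\theta_i=(q^i-1)(1+q^{D+e-i})$, which indeed collapses the coefficient in $\widetilde{\pi}E_iu_0^{\perp}$). Both approaches meet at the same expression for $E_i\hat{C}_0$ and are of comparable length; yours leans a little more on earlier machinery, the paper's is slightly more self-contained.
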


\begin{proof}
(i) Immediate from $\pi E_i\hat{x}=E_i\hat{x}$, $0\leqslant i\leqslant D$, and $\pi E_i u^{\perp}_0=0$, $1\leqslant i\leqslant D-1$.

(ii) We also recall the Leonard systems $\widetilde{\Phi}$, $\widetilde{\Phi}^{\perp}$ on $M\hat{C}$, $M\hat{C}^{\perp}$ from Propositions \ref{LS MC} and~\ref{para.seq.tilde}, respec\-tively, and consider the $\widetilde{\Phi}^*$- and $(\widetilde{\Phi}^{\perp})^*$-standard bases $\{E_i\hat{C}_0\}_{i=0}^{D-1}$, $\{E_i\widetilde{u}_0^{\perp}\}_{i=1}^D$ for \linebreak $M\hat{C}$,~$M\hat{C}^{\perp}$, respectively.
First, we obtain $\widetilde{\pi}(0)=(1)$ and $\widetilde{\pi}(D)=(0)$ since $E_0\hat{x}\in M\hat{C}$ and $E_D\hat{x}\in M\hat{C}^{\perp}$.
Next, let $1\leqslant i\leqslant D-1$. By~\eqref{C;v-com-1},~\eqref{ui.perp.tilde}, and since $\hat{C}_0=\hat{C}_0^-+\hat{C}_0^+$, we have
\begin{gather*}
	\hat{C}_0 = \hat{x}+\frac{q-1}{q^D-1}A\hat{x} + \frac{q}{q^D-1}u^{\perp}_0, \qquad \widetilde{u}_0^{\perp} = -q^e\hat{x}+\frac{q-1}{q^D-1}A\hat{x} + \frac{q}{q^D-1}u^{\perp}_0.
\end{gather*}
Then, using \eqref{e-val} we have
\begin{gather*}
	E_i\hat{C}_0 = \left( 1+\frac{q-1}{q^D-1}\theta_i \right)\!E_i\hat{x}+\frac{q}{q^D-1}E_iu^{\perp}_0 = \frac{(q^e+q^i)(q^{D-i}-1)}{q^D-1}E_i\hat{x}+\frac{q}{q^D-1}E_iu^{\perp}_0,
\end{gather*}
and likewise we have
\begin{gather*}
	E_i\widetilde{u}_0^{\perp}=\frac{(1+q^{D+e-i})(1-q^i)}{q^D-1}E_i\hat{x}+\frac{q}{q^D-1}E_iu^{\perp}_0.
\end{gather*}
Solving these equations for $E_i\hat{x}$, $E_iu^{\perp}_0$, it follows that
\begin{gather*}
	E_i\hat{x} = \frac{1}{1+q^e}E_i\hat{C}_0 -\frac{1}{1+q^e}E_i\widetilde{u}_0^{\perp}, \\
	E_iu^{\perp}_0 = \frac{(1+q^{D+e-i})(q^i-1)}{q(1+q^e)}E_i\hat{C}_0 + \frac{(q^e+q^i)(q^{D-i}-1)}{q(1+q^e)}E_i\widetilde{u}_0^{\perp}.
\end{gather*}
By these comments and since $\widetilde{\pi}E_i\hat{C}_0=E_i\hat{C}_0$, $\widetilde{\pi}E_i\widetilde{u}_0^{\perp}=0$, we obtain $\widetilde{\pi}(i)$ given above.
\end{proof}

\begin{Lemma}\label{Mat_Y}
The matrix representing $\mathcal{X}$ on $\mathbf{W}$ in $\mathcal{B}$ is
\begin{gather*}
	\operatorname{blockdiag}\bigl(x(0),x(1),\dots,x(D-1),x(D)\bigr),
\end{gather*}
where $x(0)=(\tau)$, $x(D)=(\tau^{-1}q^{-D})$, and for $1\leqslant i\leqslant D-1$,
\begin{gather*}
	x(i)=\frac{\tau}{q^D-1}\begin{pmatrix} q^{D+e-i}-q^{D+e}+q^D-q^i & q^{-1}(1+q^{D+e-i})(q^i-1)(q^e+q^i)(q^{D-i}-1) \\ q^{D+1} & -q^{2D+e-i}+q^{D+i}+q^{D+e}-q^D \end{pmatrix}.
\end{gather*}
\end{Lemma}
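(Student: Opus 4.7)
The plan is to combine the identity $\mathcal{X}=\mathcal{T}'\mathcal{T}$ from Definition~\ref{nil.H}, Theorem~\ref{thm2}, and the matrix expressions for $\pi$, $\widetilde{\pi}$ from Lemma~\ref{pi, pi.tilde wrt B}. Concretely, on $\mathbf{W}$ we have
\[
	\mathcal{T}=\big(\kappa+\kappa^{-1}\big)\widetilde{\pi}-\kappa^{-1}I, \qquad \mathcal{T}'=\big(\kappa'+\kappa'^{-1}\big)\pi-\kappa'^{-1}I,
\]
so $\mathcal{X}=\mathcal{T}'\mathcal{T}$ can be computed block-by-block in the basis $\mathcal{B}$. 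Since both $\pi$ and $\widetilde{\pi}$ are block diagonal in $\mathcal{B}$ with block sizes $1,2,\ldots,2,1$ by Lemma~\ref{pi, pi.tilde wrt B}, the same is true of $\mathcal{T}$, $\mathcal{T}'$, and $\mathcal{X}$, which immediately yields the required block-diagonal form of the matrix.

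For the endpoint blocks, I would substitute the $1$-dimensional data from Lemma~\ref{pi, pi.tilde wrt B}. At $i=0$, $\pi(0)=\widetilde{\pi}(0)=(1)$ yields $\mathcal{T}(0)=\kappa$ and $\mathcal{T}'(0)=\kappa'$, hence $x(0)=\kappa'\kappa=\sqrt{-1}\,q^{-(D+e)/2}=\tau$. At $i=D$, $\pi(D)=(1)$ and $\widetilde{\pi}(D)=(0)$ yield $\mathcal{T}(D)=-\kappa^{-1}$ and $\mathcal{T}'(D)=\kappa'$, hence $x(D)=-\kappa'\kappa^{-1}=-\sqrt{-1}\,q^{(e-D)/2}=\tau^{-1}q^{-D}$, as claimed.

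For the $2\times 2$ blocks with $1\leqslant i\leqslant D-1$, the identity $\pi(i)=\operatorname{diag}(1,0)$ collapses $\mathcal{T}'(i)$ to $\operatorname{diag}(\kappa',-\kappa'^{-1})$, so
\[
	x(i)=\operatorname{diag}\big(\kappa',-\kappa'^{-1}\big)\cdot\big(\big(\kappa+\kappa^{-1}\big)\widetilde{\pi}(i)-\kappa^{-1}I\big).
\]
Plugging in the explicit $\widetilde{\pi}(i)$ from Lemma~\ref{pi, pi.tilde wrt B}(ii) together with $\kappa=q^{-e/2}$, $\kappa'=\sqrt{-1}\,q^{-D/2}$, and $\tau=\sqrt{-1}\,q^{-(D+e)/2}$, each of the four entries of $x(i)$ reduces to the claimed polynomial. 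The useful simplifications are $\kappa+\kappa^{-1}=q^{-e/2}(1+q^e)$ (which cancels the factor $(1+q^e)$ in the denominator of $\widetilde{\pi}(i)$) and the identifications $-\kappa^{-1}\kappa'=-\tau q^e$ and $\kappa^{-1}\kappa'^{-1}=-\tau q^{D+e}$, which convert the constant term $-\kappa^{-1}\operatorname{diag}(\kappa',-\kappa'^{-1})$ into a multiple of $\tau$ compatible with the global prefactor $\tau/(q^D-1)$ of the claimed matrix.

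The main obstacle is the bookkeeping of this final simplification, since in each diagonal entry two terms must be combined over the common denominator $q^D-1$. A useful sanity check is the trace-one identity $\operatorname{trace}(\widetilde{\pi}(i))=1$, equivalent to the factorization
\[
	\big(q^e+q^i\big)\big(q^{D-i}-1\big)+\big(1+q^{D+e-i}\big)\big(q^i-1\big)=\big(q^D-1\big)\big(1+q^e\big);
\]
together with $\det(\widetilde{\pi}(i))=0$ this forces $\operatorname{trace}(\mathcal{T}(i))=\kappa-\kappa^{-1}$ and $\det(\mathcal{T}(i))=-1$, from which $\operatorname{trace}(x(i))$ and $\det(x(i))$ can be computed and matched against the sum and product of the diagonal entries of the stated matrix, providing a clean verification before doing the full entry-by-entry substitution.
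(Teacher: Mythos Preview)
Your proposal is correct and follows exactly the approach the paper indicates: the paper's own proof is the single sentence ``Since $\mathcal{X}=\mathcal{T}'\mathcal{T}$, the result routinely follows from Theorem~\ref{thm2} and Lemma~\ref{pi, pi.tilde wrt B},'' and you have simply unpacked that routine computation, inverting the formulas of Theorem~\ref{thm2} to express $\mathcal{T}$, $\mathcal{T}'$ in terms of $\widetilde{\pi}$, $\pi$ and then multiplying block-by-block. Your added trace/determinant sanity check is a nice touch but not required.
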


\begin{proof}
Since $\mathcal{X}=\mathcal{T}'\mathcal{T}$, the result routinely follows from Theorem \ref{thm2} and Lemma \ref{pi, pi.tilde wrt B}.
\end{proof}

Def\/ine the vectors $\mathbf{y}_i$, $-D\leqslant i\leqslant D-1$, by
\begin{gather*}
	\mathbf{y}_i = \frac{(q^{D-i}-1)(1+q^{D+e-i})}{(q^D-1)(1+q^{D+e-2i})} E_i \hat{x} + \frac{q^{D-i+1}}{(q^D-1)(1+q^{D+e-2i})} E_iu^{\perp}_0, \\
	\mathbf{y}_{-i} = \frac{q^{D-2i}(q^i-1)(q^e+q^i)}{(q^D-1)(1+q^{D+e-2i})} E_i \hat{x} - \frac{q^{D-i+1}}{(q^D-1)(1+q^{D+e-2i})} E_iu^{\perp}_0
\end{gather*}
for $1\leqslant i\leqslant D-1$, and
\begin{gather*}
	\mathbf{y}_0=E_0\hat{x}, \qquad \mathbf{y}_{-D}=E_D\hat{x}.
\end{gather*}
Observe that the $\mathbf{y}_i$ are real vectors. We normalized the $\mathbf{y}_i$ so that
\begin{gather}\label{y+y^inv=x}
	\mathbf{y}_i + \mathbf{y}_{-i} = E_i\hat{x}, \qquad 1 \leqslant i \leqslant D-1.
\end{gather}

\begin{Proposition}\label{Eigvec_Y}
For $-D \leqslant i \leqslant D-1$, $\mathbf{y}_i$ is an eigenvector of $\mathcal{X}$ on $\mathbf{W}$ with eigenvalue $\lambda_i$. Moreover, we have $\sum\limits^{D-1}_{i=-D} \mathbf{y}_i = \hat{x}$.
\end{Proposition}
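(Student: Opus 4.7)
The plan is to verify the eigenvector claim blockwise using the matrix description in Lemma~\ref{Mat_Y}, and then to deduce the sum identity from the resolution of the identity $\sum_{i=0}^{D}E_i=I$.

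The endpoint cases $i=0$ and $i=-D$ are immediate: by Lemma~\ref{Mat_Y}, $x(0)=(\tau)=(\lambda_0)$ and $x(D)=(\tau^{-1}q^{-D})=(\lambda_{-D})$, so the basis vectors $\mathbf{y}_0=E_0\hat{x}$ and $\mathbf{y}_{-D}=E_D\hat{x}$ are eigenvectors with the claimed eigenvalues. For $1\leqslant i\leqslant D-1$, both $\mathbf{y}_i$ and $\mathbf{y}_{-i}$ lie in the two-dimensional $\mathcal{X}$-invariant subspace $\operatorname{span}(E_i\hat{x},E_iu_0^{\perp})$, on which $\mathcal{X}$ is represented by the block $x(i)$. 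Recalling from \eqref{dK parameter} that $\tau^2=-q^{-D-e}$, so that $\tau^{-1}=-\tau q^{D+e}$, a short calculation collapses the diagonal entries of $x(i)$ and yields
\[
\operatorname{tr}(x(i))=\tau\bigl(q^i-q^{D+e-i}\bigr)=\tau q^i+\tau^{-1}q^{-i}=\lambda_i+\lambda_{-i}.
\]
A slightly longer expansion, aided by the already-factored form of the $(1,2)$ entry of $x(i)$, gives $\det(x(i))=1=\lambda_i\lambda_{-i}$; since $\mathcal{X}$ is multiplicity-free on $\mathbf{W}$ with the eigenvalues \eqref{lambda's}, this shows that the spectrum of $x(i)$ is $\{\lambda_i,\lambda_{-i}\}$.

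It then remains to identify the eigenvectors. It suffices to solve the first row of $(x(i)-\lambda_i I)v=0$: the factor $(q^e+q^i)(1-q^i)$ cancels out on both sides, pinning down the coefficient ratio of $E_i\hat{x}$ to $E_iu_0^{\perp}$ to exactly the one displayed in the definition of $\mathbf{y}_i$; the corresponding check for $\mathbf{y}_{-i}$ is identical with $\lambda_i$ replaced by $\lambda_{-i}$. For the sum identity, a direct algebraic check (using $(q^{D-i}-1)(1+q^{D+e-i})+q^{D-2i}(q^i-1)(q^e+q^i)=(q^D-1)(1+q^{D+e-2i})$) shows that the $E_iu_0^{\perp}$-coefficients in $\mathbf{y}_i+\mathbf{y}_{-i}$ cancel and the $E_i\hat{x}$-coefficients add to $1$, which is precisely \eqref{y+y^inv=x}. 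Combining this with $\mathbf{y}_0=E_0\hat{x}$, $\mathbf{y}_{-D}=E_D\hat{x}$, and $\sum_{i=0}^{D}E_i=I$, one concludes
\[
\sum_{i=-D}^{D-1}\mathbf{y}_i=E_0\hat{x}+\sum_{i=1}^{D-1}(\mathbf{y}_i+\mathbf{y}_{-i})+E_D\hat{x}=\sum_{i=0}^{D}E_i\hat{x}=\hat{x}.
\]

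The main obstacle is purely computational: the determinant identity $\det(x(i))=1$ and the matching of the coefficient ratio in $\mathbf{y}_i$ both require careful $q$-polynomial bookkeeping. The saving grace is that, once the trace/determinant step identifies the spectrum, only one scalar equation needs to be solved to pin down the eigenvector, and the off-diagonal entries of $x(i)$ are already presented in factored form so that the cancellations are transparent.
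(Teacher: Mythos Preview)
Your proof is correct and follows the same approach as the paper: the paper's own argument simply says ``direct computations using Lemma~\ref{Mat_Y}'' for the eigenvector claim and then invokes \eqref{y+y^inv=x} together with $\sum_{i=0}^D E_i=I$ for the sum identity, exactly as you do. You have merely unpacked those direct computations (trace/determinant of $x(i)$, solving one row of $(x(i)-\lambda_iI)v=0$, and verifying the identity behind \eqref{y+y^inv=x}), and these checks are accurate.
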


\begin{proof}
The f\/irst assertion follows from direct computations using Lemma \ref{Mat_Y}. By \eqref{y+y^inv=x}, we have
\begin{gather*}
	\sum^{D-1}_{i=-D}\mathbf{y}_i = \mathbf{y}_0 + \sum^{D-1}_{i=1}(\mathbf{y}_i+\mathbf{y}_{-i}) + \mathbf{y}_{-D} = \sum^D_{i=0} E_i\hat{x} = \hat{x},
\end{gather*}
as desired.
\end{proof}

By Proposition \ref{Eigvec_Y}, the $\mathbf{y}_i$ form an eigenbasis of $\mathcal{X}$ on $\mathbf{W}$. Since $\mathcal{B}$ is an orthogonal basis for $\mathbf{W}$, it follows that $\langle\mathbf{y}_i,\mathbf{y}_j\rangle\ne 0$ implies $j\in\{i,-i\}$. We now compute these inner products.

\begin{Lemma}\label{mi values}
The following $(i)$, $(ii)$ hold:
\begin{enumerate}\itemsep=0pt
\item[$(i)$] For $0\leqslant i\leqslant D$, we have
\begin{gather*}
	||E_i\hat{x}||^2 = (-1)^i q^{i(D+e-i+1)} \frac{(q^{-D};q)_i (1+q^{D+e-2i})}{(q;q)_i (-q^{e-i};q)_{D+1}}.
\end{gather*}
\item[$(ii)$] For $1\leqslant i\leqslant D-1$, we have
\begin{gather*}
	||E_iu_0^{\perp}||^2 = (-1)^{i-1} q^{i(D+e-i+1)+D-2} \frac{(q^{-D};q)_{i+1} (1+q^{D+e-2i})}{(q;q)_{i-1} (-q^{e-i+1};q)_{D-1}}.
\end{gather*}
\end{enumerate}
\end{Lemma}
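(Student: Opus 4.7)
My plan is to extract both identities from the formula \eqref{mi formula} for the scalars $\mathsf{m}_i$ attached to a Leonard system of dual $q$-Krawtchouk type, applied to $\Phi$ and $\Phi^\perp$ respectively.

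For part~(i), I would first establish that $\|E_i\hat{x}\|^2 = \mathsf{m}_i$, where $\mathsf{m}_i$ is the scalar from \eqref{mi} for the Leonard system $\Phi$ on $M\hat{x}$. Indeed, applying the defining relation $\mathsf{E}_0^*\mathsf{E}_i\mathsf{E}_0^*=\mathsf{m}_i\mathsf{E}_0^*$ to $\hat{X}\in E_0M\hat{x}$ gives $E_0^*E_i\hat{x}=\mathsf{m}_i\hat{x}$; since $E_0^*=\operatorname{diag}(\hat{x})$ is a rank-one projector onto $\mathbb{C}\hat{x}$ and $E_i$ is real symmetric, one computes $E_0^*E_i\hat{x}=(E_i)_{x,x}\hat{x}=\|E_i\hat{x}\|^2\hat{x}$. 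Substituting the parameter sequence from Proposition~\ref{Phi} (in particular $\beta/\gamma=-q^{D+e}$, $d=D$) into \eqref{mi formula} then yields the claimed expression after routine simplification.

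For part~(ii), I would run the analogous argument for $\Phi^\perp$ on $M\hat{x}^\perp$, whose $\Phi^\perp$-standard basis is $\{u_j^\perp\}_{j=0}^{D-2}$ with $u_0^\perp=E_1^*w$ and $w\in E_1M\hat{x}^\perp$. The relation $\mathsf{E}_0^*\mathsf{E}_j\mathsf{E}_0^*=\mathsf{m}_j^\perp\mathsf{E}_0^*$ applied to $w$ becomes $E_1^*E_{j+1}u_0^\perp=\mathsf{m}_j^\perp u_0^\perp$. Pairing both sides with $u_0^\perp$ and using $E_1^*u_0^\perp=u_0^\perp$ together with the self-adjoint idempotence of $E_{j+1}$ gives $\|E_{j+1}u_0^\perp\|^2=\mathsf{m}_j^\perp\|u_0^\perp\|^2$. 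I would compute $\|u_0^\perp\|^2$ directly from~\eqref{ui.perp} and Lemma~\ref{|C|;q-exp}: since $C_0^+$ and $C_1^-$ are disjoint,
\begin{gather*}
\|u_0^\perp\|^2=(q^{D-1}-1)^2|C_0^+|+(q^{-1}-1)^2|C_1^-|=q^{e-1}(q^{D-1}-1)(q^D-1).
\end{gather*}

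Finally I would set $i=j+1$ and substitute the parameter sequence of $\Phi^\perp$ from Proposition~\ref{para.seq} ($\beta^\perp/\gamma^\perp=-q^{D+e-2}$, $d=D-2$) into \eqref{mi formula}. The main task is then to combine the factor $q^{e-1}(q^{D-1}-1)(q^D-1)$ with the Pochhammer symbol $(q^{2-D};q)_{i-1}$ occurring in $\mathsf{m}_{i-1}^\perp$, absorbing the two scalar factors into $(q^{-D};q)_{i+1}$ via the identity $(1-q^{-D})(1-q^{1-D})=q^{1-2D}(q^D-1)(q^{D-1}-1)$, and to reconcile the resulting power of $q$ via $(i-1)(D+e-i)+2D+e-2=i(D+e-i+1)+D-2$. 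The only real obstacle is precisely this bookkeeping of $q$-Pochhammer symbols and exponents; conceptually nothing beyond the general Leonard-system machinery of Section~\ref{S:pre LS} is needed.
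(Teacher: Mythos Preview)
Your proposal is correct and follows essentially the same route as the paper: for~(i) you identify $\|E_i\hat{x}\|^2$ with the scalar $\mathsf{m}_i$ of the Leonard system $\Phi$ and evaluate it via~\eqref{mi formula} and Proposition~\ref{Phi}, and for~(ii) you use $\|E_iu_0^\perp\|^2=\mathsf{m}_{i-1}^\perp\|u_0^\perp\|^2$ together with the direct computation of $\|u_0^\perp\|^2$ from~\eqref{ui.perp} and Lemma~\ref{|C|;q-exp}, exactly as the paper does. Your write-up simply supplies more detail on the $q$-Pochhammer bookkeeping that the paper leaves as ``routine''.
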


\begin{proof}
Let $m_i$ (resp.~$m_i^{\perp}$) denote the scalar $\mathsf{m}_i$ from \eqref{mi} for the Leonard system $\Phi$ (resp.~$\Phi^{\perp}$).

(i) Since $||E_i\hat{x}||^2=\langle \hat{x},E_0^*E_iE_0^*\hat{x}\rangle=m_i$, the result follows from Proposition~\ref{Phi} and~\eqref{mi formula}.

(ii) By \eqref{ui.perp} and Lemma \ref{|C|;q-exp}, we have
\begin{gather*}
	||u_0^{\perp}||^2 = q^{e-1}\big(q^{D-1}-1\big)\big(q^D-1\big).
\end{gather*}
Since $||E_iu_0^{\perp}||^2=m_{i-1}^{\perp}||u_0^{\perp}||^2$ in this case, the result follows from Proposition \ref{para.seq} and \eqref{mi formula}.
\end{proof}

\begin{Lemma}\label{norms etc}
For $1 \leqslant i \leqslant D-1$, we have
\begin{gather*}
	||\mathbf{y}_i||^2 = (-1)^i q^{i(D+e-i)} \frac{(q^{1-D};q)_i(1+q^D+q^{D+e-i}-q^{D-i})}{(q;q)_i(-q^{e-i};q)_D(1+q^{D+e-2i})}, \\
	||\mathbf{y}_{-i}||^2 = (-1)^{i-1} q^{i(D+e-i-2)+D+e} \frac{(q^{1-D};q)_{i-1}(1+q^D+q^{i-e}-q^i)}{(q;q)_{i-1}(-q^{e-i+1};q)_D(1+q^{D+e-2i})}, \\
	\langle \mathbf{y}_i,\mathbf{y}_{-i} \rangle = (-1)^i q^{i(D+e-i-1)+D} \frac{(q^{1-D};q)_i}{(q;q)_{i-1}(-q^{e-i+1};q)_{D-1}(1+q^{D+e-2i})}.
\end{gather*}
Moreover,
\begin{gather*}
	||\mathbf{y}_0||^2 = \frac{1}{(-q^e;q)_D}, \qquad ||\mathbf{y}_{-D}||^2 = \frac{1}{(-q^{-e};q)_D}.
\end{gather*}
\end{Lemma}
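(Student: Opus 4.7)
The key structural observation for the plan is that the $T$-modules $M\hat{x}$ and $M\hat{x}^{\perp}$ are orthogonal by definition of the latter, and by Proposition \ref{para.seq} they are non-isomorphic irreducible $T$-modules. In particular, for each $i$, the vectors $E_i\hat{x}\in M\hat{x}$ and $E_iu_0^{\perp}\in M\hat{x}^{\perp}$ are orthogonal. Combined with the orthogonality of distinct primitive idempotents $\{E_i\}_{i=0}^{D}$, this lets me read the three inner products in question off the explicit definitions of $\mathbf{y}_{\pm i}$ as
\begin{gather*}
\|\mathbf{y}_i\|^2 = a_i^2\|E_i\hat{x}\|^2 + b_i^2\|E_iu_0^{\perp}\|^2,\qquad
\|\mathbf{y}_{-i}\|^2 = c_i^2\|E_i\hat{x}\|^2 + d_i^2\|E_iu_0^{\perp}\|^2, \\
\langle \mathbf{y}_i,\mathbf{y}_{-i}\rangle = a_ic_i\|E_i\hat{x}\|^2 + b_id_i\|E_iu_0^{\perp}\|^2,
\end{gather*}
where $a_i,b_i,c_i,d_i$ are the scalar coefficients appearing in the definition of $\mathbf{y}_{\pm i}$ just before the lemma statement. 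Note $d_i=-b_i$, so $b_id_i=-b_i^2$, which will account for the sign change between $\|\mathbf{y}_i\|^2$ and $\langle\mathbf{y}_i,\mathbf{y}_{-i}\rangle$ and will be responsible for the cancellation of a factor of $(1+q^{D+e-2i})$ in the cross term.

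For the two boundary cases, I would simply apply Lemma \ref{mi values}(i) directly: since $\mathbf{y}_0=E_0\hat{x}$ and $\mathbf{y}_{-D}=E_D\hat{x}$, specialization at $i=0$ and $i=D$ yields the stated formulas after a short $q$-Pochhammer simplification; in particular, at $i=0$ the factor $(1+q^{D+e})$ from the numerator cancels the last factor of $(-q^{e};q)_{D+1}$, leaving $1/(-q^e;q)_D$, and a parallel reversal identity handles $i=D$.

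For the generic case $1\leqslant i\leqslant D-1$, I substitute the expressions for $\|E_i\hat{x}\|^2$ and $\|E_iu_0^{\perp}\|^2$ from Lemma \ref{mi values} into the three displays above, pull out the common prefactor $[(q^D-1)^2(1+q^{D+e-2i})^2]^{-1}$, and then simplify the bracketed expression using the $q$-Pochhammer relation
\begin{gather*}
(-q^{e-i+1};q)_{D-1} = \frac{(-q^{e-i};q)_{D+1}}{(1+q^{e-i})(1+q^{D+e-i})}
\end{gather*}
together with $(q^{-D};q)_{i+1}=(q^{-D};q)_i(1-q^{i-D})$ and $(q;q)_i=(q;q)_{i-1}(1-q^i)$, in order to put both terms over a common $q$-Pochhammer denominator. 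The main obstacle, which is purely algebraic, is to verify that after this reduction the numerators collapse to the displayed factors $1+q^D+q^{D+e-i}-q^{D-i}$ (for $\|\mathbf{y}_i\|^2$), $1+q^D+q^{i-e}-q^i$ (for $\|\mathbf{y}_{-i}\|^2$), and $1$ (for the cross term, producing the particularly clean formula for $\langle\mathbf{y}_i,\mathbf{y}_{-i}\rangle$). A single factor of $(1+q^{D+e-2i})$ will cancel in every case, leaving the asserted form; the identity $(q^{D-i}-1)^2(1+q^{D+e-i})^2-q^{2(D-i+1)}\cdot\text{(ratio from $N_i^{\perp}/N_i$)}$ produces the linear combination $1+q^D+q^{D+e-i}-q^{D-i}$ times the surviving factor $(1+q^{D+e-2i})$, which is the only non-routine arithmetic step.
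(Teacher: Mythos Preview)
Your proposal is correct and follows essentially the same route as the paper's proof, which simply says the values ``are routinely computed using Lemma~\ref{mi values}.'' You have spelled out the computation in considerably more detail: the orthogonality of $E_i\hat{x}$ and $E_iu_0^{\perp}$ (which the paper records by calling $\mathcal{B}$ an orthogonal basis), the observation $d_i=-b_i$, and the $q$-Pochhammer manipulations needed to reach the stated closed forms. One small remark: you do not need non-isomorphism of the two $T$-modules to get orthogonality of $E_i\hat{x}$ and $E_iu_0^{\perp}$; it is immediate from the definition of $M\hat{x}^{\perp}$ as the orthogonal complement of $M\hat{x}$ in $\mathbf{W}$.
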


\begin{proof}
The above values are routinely computed using Lemma \ref{mi values}.
\end{proof}

\begin{Proposition}\label{oth.rel H-mod}
For $f,g \in \mathfrak{L}$, we have
\begin{gather*}
	\langle f(\mathcal{X}).\hat{x}, g(\mathcal{X}).\hat{x}\rangle = \sum^{D-1}_{i=-D}\! f(\lambda_i)\overline{g(\lambda_i)}\, ||\mathbf{y}_i||^2 + \sum_{i=1}^{D-1} \bigl(f(\lambda_i)\overline{g(\lambda_{-i})}+f(\lambda_{-i})\overline{g(\lambda_i)}\bigr)\langle \mathbf{y}_i,\mathbf{y}_{-i} \rangle.
\end{gather*}
\end{Proposition}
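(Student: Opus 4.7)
The plan is to expand $\hat{x}$ in the eigenbasis $\{\mathbf{y}_i\}_{i=-D}^{D-1}$ of $\mathcal{X}$ on $\mathbf{W}$ provided by Proposition~\ref{Eigvec_Y}, apply $f(\mathcal{X})$ and $g(\mathcal{X})$ termwise, and then expand the inner product using the orthogonality relations among the $\mathbf{y}_i$. By Proposition~\ref{Eigvec_Y} we have $\hat{x}=\sum_{i=-D}^{D-1}\mathbf{y}_i$ and $\mathcal{X}.\mathbf{y}_i=\lambda_i\mathbf{y}_i$; since $\mathcal{X}$ is invertible on $\mathbf{W}$, for any Laurent polynomial $f\in\mathfrak{L}$ we have $f(\mathcal{X}).\mathbf{y}_i=f(\lambda_i)\mathbf{y}_i$, and therefore $f(\mathcal{X}).\hat{x}=\sum_{i=-D}^{D-1}f(\lambda_i)\mathbf{y}_i$, and similarly for $g$.

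The key intermediate step is to observe that $\langle\mathbf{y}_i,\mathbf{y}_j\rangle=0$ unless $j\in\{i,-i\}$. This follows from the definition of the $\mathbf{y}_i$: for each index $i$ with $1\leqslant i\leqslant D-1$, both $\mathbf{y}_i$ and $\mathbf{y}_{-i}$ lie in the two-dimensional subspace spanned by the $\mathcal{B}$-vectors $E_i\hat{x}$ and $E_iu_0^{\perp}$, whereas $\mathbf{y}_0=E_0\hat{x}$ and $\mathbf{y}_{-D}=E_D\hat{x}$ each span their own one-dimensional $\mathcal{B}$-block; since $\mathcal{B}$ is orthogonal, vectors from distinct blocks are orthogonal. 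Moreover, the coefficients expressing the $\mathbf{y}_i$ in $\mathcal{B}$ are real, so the pairings $\langle\mathbf{y}_i,\mathbf{y}_{-i}\rangle$ are real and satisfy $\langle\mathbf{y}_{-i},\mathbf{y}_i\rangle=\langle\mathbf{y}_i,\mathbf{y}_{-i}\rangle$.

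With these ingredients in hand, the conclusion follows by bilinear (resp.\ sesquilinear) expansion:
\begin{gather*}
\langle f(\mathcal{X}).\hat{x}, g(\mathcal{X}).\hat{x}\rangle
= \sum_{i,j=-D}^{D-1} f(\lambda_i)\overline{g(\lambda_j)}\,\langle\mathbf{y}_i,\mathbf{y}_j\rangle.
\end{gather*}
By the orthogonality above, only the pairs $j=i$ (all $i$) and $j=-i$ (for $1\leqslant|i|\leqslant D-1$) contribute. The diagonal part yields the first sum in the claim. For the off-diagonal part, combining the $i\geqslant 1$ terms with the $i\leqslant -1$ terms and reindexing the latter by $i\mapsto -i$, then using $\langle\mathbf{y}_{-i},\mathbf{y}_i\rangle=\langle\mathbf{y}_i,\mathbf{y}_{-i}\rangle$, produces the second sum exactly as stated.

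No step here is genuinely hard; the decomposition and orthogonality are the substantive inputs, both of which are already established. The only place to be careful is the bookkeeping when folding the $i<0$ off-diagonal terms into the $i>0$ range, where an index or complex-conjugation slip could swap $f(\lambda_i)\overline{g(\lambda_{-i})}$ with $f(\lambda_{-i})\overline{g(\lambda_i)}$; tracking the real-valuedness of $\langle\mathbf{y}_i,\mathbf{y}_{-i}\rangle$ keeps this straight.
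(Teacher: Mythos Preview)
Your proof is correct and follows essentially the same approach as the paper's: expand $\hat{x}=\sum_{i=-D}^{D-1}\mathbf{y}_i$ via Proposition~\ref{Eigvec_Y}, apply $f(\mathcal{X})$ and $g(\mathcal{X})$ termwise, and use the block-orthogonality of the $\mathbf{y}_i$ in $\mathcal{B}$ together with the realness of the $\mathbf{y}_i$ to reduce the double sum to the stated form. The paper's own proof is terser, relegating the observation $\langle\mathbf{y}_i,\mathbf{y}_j\rangle=0$ for $j\notin\{i,-i\}$ to the surrounding text, but the argument is the same.
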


\begin{proof} From Lemma \ref{Eigvec_Y} it follows that
\begin{gather*}
	f(\mathcal{X}).\hat{x} = f(\mathcal{X}).\!\sum^{D-1}_{i=-D}\!\mathbf{y}_i =\sum^{D-1}_{i=-D}\!f(\lambda_i)\mathbf{y}_i,
\end{gather*}
and similarly for $g(\mathcal{X}).\hat{x}$. Since the $\mathbf{y}_i$ are real, the result is immediate from this comment.
\end{proof}

Def\/ine the Hermitian form $\langle \cdot, \cdot \rangle_{\mathfrak{L}}$ on the subspace $\mathfrak{L}$ from \eqref{L} by\footnote{We would like to point out here that the terms involving the $\langle \mathbf{y}_i,\mathbf{y}_{-i} \rangle$ are missing in the def\/inition of the corresponding Hermitian form for the non-symmetric $q$-Racah polynomials given in \cite[equation~(77)]{JHL2}. A~corrected statement will be given elsewhere.}
\begin{gather}\label{bl_L}
	\langle f,g\rangle_{\mathfrak{L}} = \sum^{D-1}_{i=-D}\! f(\lambda_i)\overline{g(\lambda_i)} ||\mathbf{y}_i||^2 + \sum_{i=1}^{D-1} \bigl(f(\lambda_i)\overline{g(\lambda_{-i})}+f(\lambda_{-i})\overline{g(\lambda_i)}\bigr)\langle \mathbf{y}_i,\mathbf{y}_{-i} \rangle
\end{gather}
for $f,g \in \mathfrak{L}$, where the $\lambda_i$ are from \eqref{lambda's}, and the $||\mathbf{y}_i||^2$ and the $\langle \mathbf{y}_i,\mathbf{y}_{-i} \rangle$ are given in Lemma~\ref{norms etc}. Since $\langle f,f\rangle_{\mathfrak{L}}=0$ implies $f\in \bigl(p^{\perp}h_{D-1}^{\perp}\mathbb{C}[\eta,\eta^{-1}]\bigr)\cap\mathfrak{L}=0$, it follows that $\langle \cdot, \cdot \rangle_{\mathfrak{L}}$ is an inner product on $\mathfrak{L}$. From \eqref{y+y^inv=x} it also follows that, when restricted to symmetric polynomials, \eqref{bl_L} gives an inner product for which the (symmetric) dual $q$-Krawtchouk polynomials \eqref{dual q-Krawtchouk} for the Leonard system $\Phi$ are orthogonal; cf.~\cite[Section~14.17]{KLS2010B}, \cite[Section~16]{Terwilliger2004LAA}.

We now present the orthogonality relation for the non-symmetric dual $q$-Krawtchouk polynomials $\ell_i^{\pm}$ from Def\/inition \ref{nDK}:

\begin{Theorem}\label{thm3}
With reference to the inner product \eqref{bl_L}, we have
\begin{gather*}
	\langle \ell^{\mu}_i, \ell^{\nu}_j\rangle_{\mathfrak{L}} = \delta_{i,j}\delta_{\mu,\nu}\, |C_i^{\mu}|
\end{gather*}
for $0\leqslant i,j\leqslant D-1$ and $\mu,\nu\in \{+,-\}$, where the $|C_i^{\pm}|$ are given in Lemma~{\rm \ref{|C|;q-exp}}.
\end{Theorem}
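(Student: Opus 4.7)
The proof is essentially a one-line combination of the preceding results, and the plan is to make this chain of identifications explicit.

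First I would observe that the Hermitian form $\langle \cdot, \cdot\rangle_{\mathfrak{L}}$ defined in \eqref{bl_L} is, by construction, exactly the right-hand side of Proposition \ref{oth.rel H-mod}. Hence for any $f,g\in\mathfrak{L}$ we have the identity
\begin{gather*}
\langle f,g\rangle_{\mathfrak{L}}=\langle f(\mathcal{X}).\hat{x},\,g(\mathcal{X}).\hat{x}\rangle,
\end{gather*}
where the inner product on the right is the standard Hermitian inner product on $V$ inherited by $\mathbf{W}$. In other words, the map $f\mapsto f(\mathcal{X}).\hat{x}$ from $\mathfrak{L}$ to $\mathbf{W}$ is an isometry (it is injective since $\mathcal{X}$ has $2D=\dim(\mathfrak{L})$ distinct eigenvalues on $\mathbf{W}$ by \eqref{lambda's} and Lemma \ref{giving minimal polynomial}).

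Next I would invoke Proposition \ref{C;K.v0}, which translates the action of the non-symmetric dual $q$-Krawtchouk polynomials on $\hat{x}$ into characteristic vectors of the parts of our refined partition:
\begin{gather*}
\ell^{\mu}_i(\mathcal{X}).\hat{x}=\hat{C}^{\mu}_i,\qquad 0\leqslant i\leqslant D-1,\ \mu\in\{+,-\}.
\end{gather*}
Combining these two facts yields $\langle \ell^{\mu}_i,\ell^{\nu}_j\rangle_{\mathfrak{L}}=\langle \hat{C}^{\mu}_i,\hat{C}^{\nu}_j\rangle$.

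Finally I would use the combinatorial description of the inner product on $V$: for any subsets $Y,Z\subset X$ we have $\langle\hat{Y},\hat{Z}\rangle=|Y\cap Z|$. Since the collection $\{C^{\pm}_i\}_{i=0}^{D-1}$ is a partition of $X$ (it refines the equitable partition $\{C_i\}_{i=0}^{D-1}$, as noted in Section \ref{S:algT}), we obtain $\langle\hat{C}^{\mu}_i,\hat{C}^{\nu}_j\rangle=\delta_{i,j}\delta_{\mu,\nu}|C^{\mu}_i|$, where the cardinalities are given explicitly in Lemma \ref{|C|;q-exp}. This is the claimed orthogonality relation.

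There is really no main obstacle: all the work has been done upstream. The conceptual content is that the Hermitian form on $\mathfrak{L}$ was engineered precisely so that the evaluation map $f\mapsto f(\mathcal{X}).\hat{x}$ is an isometry onto $\mathbf{W}$, and the non-symmetric dual $q$-Krawtchouk polynomials $\ell^{\pm}_i$ are the preimages of the mutually orthogonal characteristic vectors $\hat{C}^{\pm}_i$ under this isometry. The only point one must be slightly careful about is the non-trivial off-diagonal terms $\langle\mathbf{y}_i,\mathbf{y}_{-i}\rangle$ in \eqref{bl_L}: these are essential for the identification with $\langle f(\mathcal{X}).\hat{x},g(\mathcal{X}).\hat{x}\rangle$ to hold on all of $\mathfrak{L}$ rather than just on the symmetric subspace, and it is exactly this enlargement that makes the orthogonality of the non-symmetric polynomials $\ell^{\pm}_i$ (as opposed to merely the symmetric dual $q$-Krawtchouk polynomials) come out correctly.
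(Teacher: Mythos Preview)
Your proof is correct and follows exactly the same approach as the paper, which simply states that the result is immediate from Propositions~\ref{C;K.v0} and~\ref{oth.rel H-mod}. You have merely unpacked this one-line justification by making explicit the isometry $f\mapsto f(\mathcal{X}).\hat{x}$ and the orthogonality of the characteristic vectors $\hat{C}^{\pm}_i$.
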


\begin{proof}Immediate from Propositions \ref{C;K.v0} and \ref{oth.rel H-mod}.
\end{proof}

\subsection*{Acknowledgements}

The authors thank Daniel Orr for helpful comments on nil-DAHAs of type $(C_n^{\vee},C_n)$. They also thank Paul Terwilliger for many valuable discussions, Marta Mazzocco and Alexei Zhedanov for bringing the authors' attention to \cite{Mazzocco2014SIGMA,Mazzocco2016N}, and the anonymous referees for carefully reading the paper. Part of this work was done while Jae-Ho Lee was visiting Tohoku University as a~JSPS Postdoctoral Fellow. Hajime Tanaka was supported by JSPS KAKENHI Grant Numbers JP25400034 and JP17K05156. An extended abstract of this work appeared in the proceedings of FPSAC'17, London, UK, July 2017; cf.~\cite{FPSAC}.

\pdfbookmark[1]{References}{ref}
\LastPageEnding

\end{document}